\documentclass[11pt]{amsart}
\usepackage{amsmath,amsfonts,amssymb,amsthm,a4wide,amsbsy,vaucanson-g}
\usepackage[utf8]{inputenc}
\usepackage{enumerate}
\usepackage{color,soul}

\usepackage{graphicx}

\DeclareMathOperator{\rep}{rep}
\DeclareMathOperator{\lcm}{lcm}

\theoremstyle{plain}
\newtheorem{theorem}{Theorem}
\newtheorem{corollary}[theorem]{Corollary}
\newtheorem{prop}[theorem]{Proposition}

\newtheorem{lemma}[theorem]{Lemma}

\theoremstyle{definition}
\newtheorem{definition}[theorem]{Definition}
\newtheorem{remark}[theorem]{Remark}
\newtheorem{example}[theorem]{Example}

\newtheorem{algo}{Algorithm}

\def\bw{\mathbf{w}}
\def\bu{\mathbf{u}}
\def\bv{\mathbf{v}}
\def\b1{\mathbf{1}}

\def\N{\mathbb{N}}
\def\R{\mathbb{R}}

\def\p{\mathsf{p}}
\def\dil{\mathrm{Dil}}
\def\spec{\mathrm{Spec}}
\def\multspec{\mathrm{Spec}}

\def\restriction#1#2{\mathchoice
              {\setbox1\hbox{${\displaystyle #1}_{\scriptstyle #2}$}
              \restrictionaux{#1}{#2}}
              {\setbox1\hbox{${\textstyle #1}_{\scriptstyle #2}$}
              \restrictionaux{#1}{#2}}
              {\setbox1\hbox{${\scriptstyle #1}_{\scriptscriptstyle #2}$}
              \restrictionaux{#1}{#2}}
              {\setbox1\hbox{${\scriptscriptstyle #1}_{\scriptscriptstyle #2}$}
              \restrictionaux{#1}{#2}}}
\def\restrictionaux#1#2{{#1\,\smash{\vrule height .8\ht1 depth .85\dp1}}_{\,#2}}

\begin{document}
\title[Asymptotic properties of morphisms]{Asymptotic properties of free monoid morphisms}

\author{\'Emilie CHARLIER}
\author{Julien LEROY}
\author{Michel RIGO}
\address{Universit\'e de Li\`ege, Institut de math\'ematique, Grande traverse 12 (B37),
4000 Li\`ege, Belgium\newline
echarlier@ulg.ac.be, J. Leroy@ulg.ac.be, M.Rigo@ulg.ac.be}

\begin{abstract} 
    Motivated by applications in the theory of numeration systems and
    recognizable sets of integers, this paper deals with morphic words
    when erasing morphisms are taken into account. 
    Cobham showed that if an infinite word $\bw =g(f^\omega(a))$  is the image of
    a fixed point of a morphism $f$ under another morphism $g$, then there exist 
    a non-erasing morphism $\sigma$ and a coding $\tau$ such that $\bw =\tau(\sigma^\omega(b))$.

Based on the Perron theorem about asymptotic properties of
powers of non-negative matrices, our main contribution is an in-depth study of the
growth type of iterated morphisms when one replaces erasing morphisms
with non-erasing ones. 
We also explicitly provide an algorithm computing $\sigma$ and $\tau$ from $f$ and $g$.
\end{abstract}

\keywords{free monoid; morphism; asymptotics; non-negative matrix; numeration system; algorithm}
\subjclass{68R15, 11B85}

\maketitle

\section{Introduction}

Infinite words, i.e., infinite sequences of symbols from a finite set usually called alphabet, 
form a classical object of study. 
They have an important representation power: they are a natural way to code elements of an
infinite set using finitely many symbols, e.g., the coding of an orbit in a discrete dynamical system or 
the characteristic sequence of a set of integers. 
A rich family of infinite words, with a simple algorithmic description, 
is made of the words obtained by iterating a morphism \cite{Choffrut&Karhumaki:1997}. 
The necessary background about words is given in Section~\ref{sec:basic}.

In relation with numeration systems, recognizable sets of integers are well studied. 
For instance, see \cite{BHMV:1994}. Let $k\ge 2$ be an integer. 
A set $X\subseteq\N$ is said to be {\em $k$-recognizable} if the set of base-$k$ expansions 
of the elements in $X$ is accepted by a finite automaton. 
Characteristic sequences of $k$-recognizable sets have been characterized by Cobham \cite{Cobham:1972}. 
They are the images of a fixed point of a $k$-uniform morphism under a coding (also called letter-to-letter morphism). 
We let $A^*$ denote the set of finite words over the alphabet $A$. 
This set, equipped with a product which is the usual concatenation of words, is a monoid. 
A {\em morphism} $f\colon A^*\to B^*$ satisfies, for all $u,v\in A^*$, $f(uv)=f(u)f(v)$. 
A morphism is {\em $k$-uniform} if the image of every letter is a word of length $k$. 
A $1$-uniform morphism is a {\em coding}. 
As an example of recognizable set, the Baum--Sweet set $S$ is defined as follows \cite{Allouche:1987}. 
The integer $n$ belongs to $S$ if and only if the base-$2$ expansion of $n$  
contains no block of consecutive $0$’s of odd length. 
The set $S$ is $2$-recognizable, the deterministic automaton depicted in Figure~\ref{fig:baum} 
recognizes the base-$2$ expansions of the elements in $S$ (read most significant digit first).
 \begin{figure}[htbp]
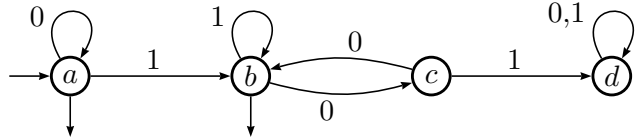

        \centering
\VCDraw{
        \begin{VCPicture}{(0,-1.3)(12,1.8)}
\State[a]{(0,0)}{1} 
\State[b]{(4,0)}{2}
\State[c]{(8,0)}{3}
\State[d]{(12,0)}{4}
\Initial[w]{1}
\Final[s]{1}
\Final[s]{2}
\LoopN{1}{$0$}
\LoopN{2}{$1$}
\LoopN{4}{$0,1$}
\ArcR{2}{3}{$0$}
\ArcR{3}{2}{$0$}
\EdgeL{1}{2}{$1$}
\EdgeL{3}{4}{$1$}
\end{VCPicture}
}
        \caption{The Baum--Sweet set is $2$-recognizable.}
        \label{fig:baum}
 \end{figure} 
The characteristic sequence $\mathbf{x}$ of $S$ starts with $1101100101001001\cdots$. 
It is the image of the infinite word $ab cb bdcb cbddbdcb\cdots$ 
under the coding $\tau\colon a,b\mapsto 1, c,d\mapsto 0$. 
Moreover, the latter infinite word is a fixed point of the $2$-uniform morphism 
$\sigma\colon a\mapsto ab, b\mapsto cb, c\mapsto bd, d\mapsto dd$. 
We write $\mathbf{x}=\tau(\sigma^\omega(a))$. 
Indeed, to obtain $\mathbf{x}$, one iterates the morphism $\sigma$ from $a$ 
to get a sequence $(\sigma^n(a))_{n\ge 0}$ of finite words of increasing length 
whose first terms are: $a, ab, abcb,  ab cb bdcb, ab cb bdcb cbddbdcb,\ldots$. 
This sequence converges to an infinite word which is a fixed point of $\sigma$. 
See, for instance, \cite{cant:2010,rigo1} for the definition of converging sequences of words. 
Note that there are infinitely many morphisms that can be used to generate the word $\mathbf{x}$. 
Take $\sigma'\colon a\mapsto abe, b\mapsto cefb, c\mapsto bfd, d\mapsto defd, e\mapsto ef, 
f\mapsto \varepsilon$ where $\varepsilon$ is the empty word (the identity element for concatenation), i.e., 
the unique word of length $0$. 
In that case, we say that $\sigma'$ is {\em erasing}. 
Take $\tau'\colon a,b\mapsto 1, c,d\mapsto 0, e,f\mapsto \varepsilon$. 
One fixed point of $\sigma'$ starts with $abe cefbef bfd efcefb\cdots$ 
and the image by the erasing morphism $\tau'$ of this word again is $\mathbf{x}$. 
The general aim of this paper is to derive from erasing morphisms such as $\sigma'$ 
and  $\tau'$ new non-erasing morphisms (where images of all letters have positive length) such as $\sigma$ and  $\tau$ 
that produce the same infinite word $\mathbf{x}$ 
and to retrieve some kind of canonical information 
(e.g., spectral radius, growth order) about $\mathbf{x}$ itself.

In the theory of integer base systems, we also recall another important theorem of Cobham \cite{Cobham:1969}. 
Let $k,\ell\ge 2$ be two multiplicatively independent integers, i.e., 
they are such that $\log k/\log \ell$ is irrational. 
If a set $X\subseteq\N$ is both $k$-recognizable and $\ell$-recognizable, 
then $X$ is a finite union of arithmetic progressions.  
In terms of morphisms, this result can be stated as follows. 
If an infinite word can be obtained as the coding of fixed points of two morphisms, 
one being $k$-uniform and the other one being $\ell$-uniform, 
then this word is ultimately periodic. It is of the form $uv^\omega=uvvv\cdots$, i.e., 
it has a (possibly empty) prefix $u$ followed by an infinite repetition of the finite (non-empty) word $v$. 

{\em Abstract numeration system} generalize in a natural way base-$k$ numeration systems, 
as well as many other classical systems such as the Zeckendorf system based on the Fibonacci sequence. 
Recognizability of sets of integers within an abstract numeration system has been fruitfully introduced. 
For a survey on these topics, see \cite[Chap. 3]{cant:2010}. 
Briefly, a set $X\subseteq\N$ is recognizable if the set $\rep(X)$ of the representations of its elements
within the considered numeration system  is a regular language.
In particular, the theorem of Cobham from 1972 can be extended as follows \cite{Rigo:2000,Maes&Rigo:02}. 
A set $X\subseteq\N$ is recognizable within one abstract numeration system (based on a regular language) 
if and only if its characteristic sequence $\chi_X$ is {\em morphic}: 
it is the image of a fixed point of a morphism under a morphism. 
In comparison with Cobham's result, there is no restriction on the two morphisms. 
In particular, the constructive proof in \cite{Maes&Rigo:02} yields morphisms that are usually erasing.

Since abstract numeration systems are a generalization of integer base systems, 
it is natural to seek an analogue of the theorem of Cobham from 1969. 
We state the corresponding results in terms of infinite words of the form $g(f^\omega(a))$ 
that are obtained as images of a fixed point $f^\omega(a)$ 
of a morphism $f$ under a morphism $g$. 
In the case where the morphism $g$ is non-erasing, 
a series of papers has led Durand to a generalization of this theorem of Cobham
 \cite{Durand:1998a,Durand:1998c,Durand:2002,Durand:2011}. 
The precise definition of $\lambda$-pure morphic is too long to be discussed in this introduction. 
(It is given in Definition~\ref{def:pure}.)
But the main point of that definition is about the growth rate 
of the entries of the powers of a matrix associated with a morphism. 
Note that within the classical setting of the theorem of Cobham from 1969, 
$k$-uniform morphisms generate $k$-pure morphic words, 
$k$ being an integer greater than or equal to $2$.

\begin{theorem}[Cobham--Durand]\label{durand}
	Let $\lambda,\mu>1$ be two multiplicatively independent real numbers, i.e., 
	$\log \lambda / \log \mu\in \mathbb{R}\setminus \mathbb{Q}$.
	Let $\bu$  be a $\lambda$-pure morphic word and $\bv$ be a $\mu$-pure morphic word. 
	Let $\phi$ and $\psi$ be two non-erasing morphisms. 
	If $\bw=\phi(\bu)=\psi(\bv)$, then $\bw$ is ultimately periodic.
\end{theorem}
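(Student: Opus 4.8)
The plan is to reduce the two hypotheses to a normal form in which $\bw$ is, up to a coding, a fixed point of a non-erasing morphism with a prescribed dominant growth rate, to read off sharp asymptotics of the relevant block lengths from the Perron theorem, and then, assuming $\bw$ not ultimately periodic, to reach a contradiction from the multiplicative independence of $\lambda$ and $\mu$. For the first step I would use the theorem of Cobham recalled in the introduction \cite{Cobham:1972}: an image of a fixed point under a morphism is the image under a coding of a fixed point of a \emph{non-erasing} morphism. Applying this to $\phi(\bu)$ and $\psi(\bv)$, and invoking the results of the previous sections --- which ensure that the non-erasing morphism so produced can be chosen with the same growth type --- one obtains $\bw=\tau(\sigma^\omega(a))=\rho(\delta^\omega(b))$, with $\tau,\rho$ codings, $\sigma,\delta$ non-erasing, $a,b$ growing letters, every letter actually occurring in the respective fixed point, and the spectral radii of the incidence matrices of $\sigma$ and $\delta$ equal to $\lambda$ and $\mu$ respectively. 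Passing to suitable powers of $\sigma$ and $\delta$ and applying the Perron theorem to the incidence matrices, I would then arrange that $|\sigma^{n}(a)|=\Theta(\lambda^{n}n^{k})$ and $|\delta^{m}(b)|=\Theta(\mu^{m}m^{\ell})$ for some integers $k,\ell\ge 0$, and more generally that each letter has a well-understood growth of the form $\Theta(\alpha^{n}n^{j})$ with $\alpha\le\lambda$.

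Writing the fixed point as $\sigma^\omega(a)=c_{0}c_{1}c_{2}\cdots$, the words $\tau(\sigma^{n}(c_{0})),\tau(\sigma^{n}(c_{1})),\ldots$ concatenate to $\bw$ and so tile it into consecutive blocks whose lengths take finitely many values, those coming from letters of maximal growth being of order $\lambda^{n}n^{k}$; call this the level-$n$ $\sigma$-factorisation of $\bw$, and define the level-$m$ $\delta$-factorisation analogously from $\delta$ and $\rho$. The arithmetic ingredient, exactly as in the $1969$ theorem of Cobham \cite{Cobham:1969}, is then: since $\log\lambda/\log\mu$ is irrational, the set $\{n\log\lambda-m\log\mu:n,m\in\N\}$ is dense in $\R$, and because the polynomial corrections contribute only $O(\log n)=o(n)$ on the logarithmic scale, for every $\varepsilon>0$ there are infinitely many pairs $(n,m)$, with $\min(n,m)\to\infty$, for which some level-$n$ $\sigma$-block and some level-$m$ $\delta$-block have lengths whose ratio lies in $[1-\varepsilon,1+\varepsilon]$.

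Finally, suppose $\bw$ is not ultimately periodic, fix a small $\varepsilon$, and pick such a near-coincident pair $(n,m)$ with both block lengths large. Superimposing the level-$n$ $\sigma$-factorisation and the level-$m$ $\delta$-factorisation of $\bw$ and analysing how often a given long block reappears, one shows --- this is in essence the substitutive Cobham argument of Durand \cite{Durand:2011} --- that two occurrences of a common factor of length comparable to $\lambda^{n}$ at distance smaller than that length force, through the self-similarity of each factorisation (a level-$(n{+}1)$ block being a concatenation of level-$n$ blocks, and likewise for $\delta$), a common period which then propagates to all larger scales; hence $\bw$ is ultimately periodic, a contradiction. I expect this concluding step to be the main obstacle: simultaneously controlling the positions of occurrences of a factor of length $\approx\lambda^{n}$ with respect to \emph{both} hierarchies, and then bootstrapping a local period into a global one, is delicate, and genuinely separate cases appear when the incidence matrices are imprimitive (several eigenvalues of modulus $\lambda$, cyclic behaviour of the dominant stratum) or when non-growing letters occur. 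It is precisely here that the fine asymptotics provided by the Perron theorem, as developed in the earlier sections, are used: they allow one to pass to a power of the morphism for which the dominant stratum is aperiodic and all other letters grow strictly more slowly, reducing the general case to the clean one above.
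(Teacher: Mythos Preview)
The paper does not prove this theorem. Theorem~\ref{durand} is stated in the introduction as a known result, attributed to Durand's series of papers \cite{Durand:1998a,Durand:1998c,Durand:2002,Durand:2011}, and is used only as motivation for the paper's actual contribution (the growth-type analysis surrounding Theorem~\ref{cob68}). There is therefore no ``paper's own proof'' to compare your proposal against.

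That said, a few remarks on your sketch. Your first step is sound and is indeed what the paper's machinery supports: since $\phi$ and $\psi$ are non-erasing, the sub-alphabet removed by Algorithm~\ref{algo1} consists only of mortal letters, so the Corollary following Proposition~\ref{prop:f-g-non-effacants} lands you in case~(1) and the growth type survives (up to replacing $\lambda,\mu$ by powers, which preserves multiplicative independence); Proposition~\ref{prop:algo} then gives you non-erasing $\sigma,\delta$ with the right growth rates. Where your proposal becomes genuinely incomplete is the final paragraph. Durand's actual argument in \cite{Durand:2011} does not proceed by ``superimposing the level-$n$ $\sigma$-factorisation and the level-$m$ $\delta$-factorisation'' and propagating a local period; it goes through return words, derived sequences, and a finiteness argument on the set of derived sequences of a primitive substitutive word. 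Your outline of the endgame --- ``two occurrences of a common factor \ldots\ force a common period which then propagates'' --- is not a proof but a hope, and you yourself flag it as the main obstacle. In particular, nothing in your sketch explains why near-equal block lengths at infinitely many scales should force \emph{any} repetition of a long factor at short distance, let alone one that bootstraps globally; this is precisely the hard combinatorial core that Durand's return-word machinery is designed to handle.
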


Let us now recall the result at the heart of our discussion in this paper. 
The following well known result in combinatorics on words 
is again attributed to Cobham. The aim is to get rid of the effacement in the two morphisms involved in the definition of an infinite morphic word.

\begin{theorem}\label{cob68}
         	Let $f$ be a morphism prolongable on a letter $a$ and $g$ a morphism 
	such that $g(f^{\omega}(a))$ is an infinite word. 
	Then there exist a non-erasing morphism $\sigma$ prolongable on  a letter $b$ 
	and a coding $\tau$ such that $g(f^{\omega}(a)) = \tau(\sigma^\omega(b))$.
\end{theorem}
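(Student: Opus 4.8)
The plan is to build $\sigma$ and $\tau$ in two stages. First I would reduce to the case where $g$ itself is non-erasing, and only afterwards worry about $f$. So the first step is to handle the effacement coming from $f$. The obstacle here is that $f$ may erase letters, so that although $f^\omega(a)$ is infinite, a naive iteration may pass through words whose length does not grow. The standard device is to pass to the subalphabet of \emph{growing} letters: call a letter $c$ \emph{mortal} (for $f$) if $f^n(c)=\varepsilon$ for some $n$, and \emph{bounded} if $\{|f^n(c)| : n\ge 0\}$ is bounded. Let $A'\subseteq A$ be the set of letters that are \emph{not} bounded, i.e.\ those whose iterates grow without bound. One shows that in $f^\omega(a)$ every factor is a bounded-length block of bounded letters wedged between two letters of $A'$, so $f^\omega(a)$ can be recoded: there is a finite alphabet $C$ of such blocks, a non-erasing morphism $h$ on $C$ (obtained by restricting a suitable power $f^p$ to $A'$ and decomposing the images into blocks), a prolongable letter, and a morphism $e\colon C^*\to B^*$ with $g(f^\omega(a)) = e(h^\omega(\cdot))$. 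Crucially $h$ is now non-erasing. This is essentially the classical "desubstitution'' / growing-letter argument; I expect this bookkeeping — proving that the blocks between consecutive $A'$-letters are uniformly bounded and that the decomposition is consistent under iteration — to be the technical heart of the proof.

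The second step deals with the remaining erasing morphism $e$. Now $h$ is non-erasing and prolongable, say on $b$, and we must replace $e$ by a coding. Write $\bu = h^\omega(b)$. The idea is to encode, in a new larger alphabet, each letter of $\bu$ together with its image under $e$, and to thread these images through so that reading a single new letter outputs exactly one letter of $e(\bu)$. Concretely, since $e$ is erasing one bundles a letter $c\in C$ with the (finite, bounded) word $e(c)$; letters $c$ with $e(c)=\varepsilon$ must be absorbed into neighbours. One shows that in $\bu$ the $e$-erased letters again occur in bounded blocks between non-erased ones (because $e(\bu)$ is infinite and $\bu$ has linear recurrence-free structure under $h$), so one can again recode: a new non-erasing morphism $\sigma$ on an alphabet $D$ (pairs "letter-of-$C$ plus position inside its $e$-image, modulo absorbed blocks''), prolongable on some $b'$, together with a coding $\tau\colon D^*\to B^*$ reading off the single symbol attached to each element of $D$, and $\tau(\sigma^\omega(b')) = e(\bu) = g(f^\omega(a))$. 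The composition of the two recodings is the claimed $(\sigma,\tau)$.

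Two points need care to make the above rigorous. First, at each recoding one must check prolongability is preserved: the image of the distinguished letter under the new morphism must again start with that letter, which follows from tracking the leading letter of $f^p(a)$ (resp.\ of $h(b)$) and using that it is a growing (resp.\ non-erased) letter. Second, one must verify that "blocks between growing letters are bounded'': for $f$ this is because a bounded letter has boundedly many images of bounded length and there are finitely many letters; for $e$ it is because if arbitrarily long $e$-erased factors occurred in $\bu$, then $e(\bu)$ would have arbitrarily long gaps, contradicting that it is an infinite word produced with $\bu$ uniformly recurrent-free under the non-erasing $h$ — more simply, one can argue via the fact that $\bu$ itself has bounded factor "return structure'' between any fixed finite set of positions. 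I would present the growing-letter recoding as a self-contained lemma and then apply it twice; the first application is the main obstacle, the second is a routine variant.
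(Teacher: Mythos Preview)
Your plan has a genuine gap, and it occurs in both recoding steps for the same reason: the claim that the relevant blocks are uniformly bounded is false.

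Take $f\colon a\mapsto aab,\ b\mapsto b$. Here $a$ is growing and $b$ is bounded (indeed $f^n(b)=b$). From the relation $f^{n+1}(a)=f^n(a)\,f^n(a)\,b$ one sees that $f^n(a)$ ends in $b^n$, and that inside $f^\omega(a)$ the second copy of $f^n(a)$ is followed by an extra $b$ and then by a word beginning with $a$; hence $f^\omega(a)$ contains the factor $b^{n+1}$ for every $n$. So the maximal blocks of bounded letters between growing letters are \emph{not} uniformly bounded, and your alphabet $C$ of such blocks is infinite. The justification you give (``a bounded letter has boundedly many images of bounded length'') bounds the length of $f^n(c)$ for each bounded $c$, but says nothing about how many consecutive bounded letters can accumulate in the fixed point. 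The same example, with $h=f$ and $e\colon a\mapsto a,\ b\mapsto\varepsilon$, also defeats your second step: $e(h^\omega(a))=a^\omega$ is infinite, $h$ is non-erasing, yet the $e$-erased letters occur in unbounded blocks. The appeal to ``linear recurrence-free structure'' is not a property that holds of arbitrary non-erasing fixed points.

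The paper's route avoids this entirely. Rather than recoding blocks, it removes from the alphabet exactly those letters $b$ for which $g(f^{pn}(b))=\varepsilon$ for infinitely many $n$ (after passing to a suitable power $f^p$). This set is closed under $f^p$, so restricting gives a genuine sub-morphism; iterating the removal of mortal letters and of such $g$-vanishing sub-alphabets terminates with morphisms $f',g'$ that are \emph{both} non-erasing and still produce the same word. Once $g'$ is non-erasing, a further power ensures $|g'(f'(b))|\ge|g'(b)|$ for all $b$, and then the standard trick applies: split each letter $b$ into $|g'(b)|$ copies $(b,0),\ldots,(b,|g'(b)|-1)$, let $\tau(b,i)$ be the $i$th letter of $g'(b)$, and define $\sigma$ by cutting $\alpha(f'(b))$ into $|g'(b)|$ nonempty pieces. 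No block-boundedness is needed anywhere.
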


Many authors have considered the problem of
getting rid of erasing morphisms when dealing with morphic words
\cite{Cobham:68,Pansiot:83,Allouche&Shallit:2003,Honkala:2009}. 
Motivations to cast a new light on this theorem are as follows.
\begin{itemize}
  \item This result is useful in the study of combinatorial properties of infinite words 
	because non-erasing morphisms are easier to deal with. 
	For instance, if $\sigma$ is non-erasing, then the sequence of lengths 
	of the words $\sigma^n(a)$ is non-decreasing.
  \item As mentioned earlier, in the study of abstract numeration systems, 
	the usual constructions lead to erasing morphisms 
	and again it would be convenient to work with non-erasing morphisms.
  \item With the notation of Theorem~\ref{durand},
	if $\phi$ is an erasing morphism and $\bu$ is a $\lambda$-pure morphic word, 
	then even though the morphic word $\phi(\bu)$ can
	be obtained as $\tau(\sigma^\omega(b))$ with a non-erasing morphism~$\sigma$ 
	and a coding~$\tau$ (thanks to Theorem~\ref{cob68}) and, 
	contrary to what was stated in \cite[Prop.~14]{Durand&Rigo:2009}, 
	the infinite word $\sigma^\omega(b)$ need not be $\lambda$-pure morphic. 
	As a counter-example to \cite[Prop.~14]{Durand&Rigo:2009},  
	take $f\colon a\mapsto abc, b\mapsto bac, c\mapsto ccc$,
	$\phi\colon a\mapsto a,b\mapsto b, c\mapsto \varepsilon$ and $\sigma\colon a\mapsto ab, b\mapsto ba$. 
	Even though $\bu=f^\omega(a)$ is $3$-pure morphic, its image
	$\phi(\bu)=\phi(f^\omega(a))=\sigma^\omega(a)$ under $\phi$ 
	is the Thue-Morse word  which is $2$-pure morphic.
	Thus, to be able to relate the growth orders of abstract numeration systems 
	and the corresponding morphisms or, as a first step towards a generalization of Cobham-Durand theorem, 
	whenever a morphic word $\bw$ can be obtained both as $g(f^\omega(a))$ and $\tau(\sigma^\omega(b))$ 
	where $\tau$ is a coding and $\sigma$ is a non-erasing morphism, 
	it is of great importance to have an in-depth analysis of the relations existing between the morphisms $f,g$ 
	and $\sigma,\tau$.
\item A first study of the admissible growth rates of recognizable sets of integers 
	within an abstract numeration system was considered in \cite{Charlier&Rampersad:2011}.
\item Cobham-Durand theorem does not apply to the case of morphisms with Perron eigenvalue equal to $1$, that is of polynomial growth. Those morphisms were only partially covered in \cite{Durand&Rigo:2009}. 
Let us point out that morphisms of polynomial growth are also studied in \cite{Mauduit:1986}. 
\item Another motivation comes from the classification of infinite words using transduction. 
	Roughly speaking, a transducer is a finite-state machine, i.e., a deterministic finite automaton 
	where transitions are labeled with input letters and (possibly empty) output words, 
	used to replace an infinite word by another one, where the $n$th output 
	depends on the first $n$ symbols of the original word \cite{sprunger}. 
	For instance, Dekking proved that morphic words are closed under transduction \cite{dekking:1994}.
\end{itemize}

In this paper, we gather all the necessary tools to deal with these erasing morphisms. 
If $f\colon A^*\to A^*$ is a morphism, one usually considers the matrix $\mathsf{Mat}_f$ 
where the entry $(\mathsf{Mat}_f)_{b,a}$ is the number of occurrences 
of the symbol $b\in A$ in the image $f(a)$, $a\in A$. 
Thus the sum of the entries of the column $a$ is the length of $f(a)$. 
In particular, it is easy to see that $((\mathsf{Mat}_f)^n)_{b,a}$ is the number of occurrences 
of the symbol $b\in A$ in the image $f^n(a)$, i.e., $((\mathsf{Mat}_f)^n)_{b,a}=(\mathsf{Mat}_{f^n})_{b,a}$. 
Thus we will keep track of the matrices associated with morphisms and study the asymptotic behavior of their powers. 
With the notation of Theorem~\ref{cob68}, our task is to relate the properties 
of the matrix $\mathsf{Mat}_f$ associated with $f$ to the matrix $\mathsf{Mat}_\sigma$ associated with $\sigma$. 

In Section~\ref{sec:2}, we first recall some classical results in linear algebra. 
We assume that the reader is more familiar with combinatorics on words than with applications of 
Perron-Frobenius theory. 
So this section is written to be self-contained. 
Our presentation avoids the use of analytic results about rational series~\cite{Salomaa-Soittola:1978} 
and should be accessible to readers having a background either in graph theory or linear algebra. 
We make use of the Perron theorem (that is plainly stated)
and we discuss properties of non-negative matrices. 
With Lemma~\ref{lemma:upper-triangular-primitive}, 
Proposition~\ref{prop:lambda-d-ijr}
and Proposition~\ref{prop:somme}, 
we carefully study the asymptotic behavior of their powers where a periodicity naturally appears. 
 We also introduce the notion of a dilated matrix and show that a non-negative matrix 
and any of its dilated versions have the same spectral radius. 
Dilatation of matrices naturally appears in the algorithm derived from Theorem~\ref{cob68}.

Section~\ref{sec:3} contains the main discussion about erasing morphisms. 
First we recall how to get rid of these morphisms. 
With the notations of Theorem~\ref{cob68}, one can effectively get the morphisms $\sigma$ and $\tau$ from $f$ and $g$. 
Then our aim is to relate the growth rate of the new non-erasing morphisms with that of the former erasing morphisms. 

Along the paper we explicitly present an algorithm derived from Theorem~\ref{cob68} in four parts (Algorithms 1 to 4). 
Thus the implementation of it can be easily realized. 

\section{Asymptotics and operations on matrices}\label{sec:2}

Matrices are naturally associated with morphisms. 
In this section, we recall some results about non-negative matrices. 

We also introduce dilatation of matrices.  
The notion provides structural information on the transformations we apply to morphisms. 
However it is not crucial for the results we obtain later on. 
It provides some extra information about Proposition~\ref{prop:algo}. 
Also it naturally appears in constructions where the product of two automata is considered 
(e.g., in the proof that any recognizable set within an abstract numeration system 
has a morphic characteristic sequence \cite{Maes&Rigo:02,rigo1}).

\subsection{Perron--Frobenius theory}
\begin{definition}
	Let $M$ be a square matrix. 
	The {\em spectrum of $M$} is the multiset of its eigenvalues (repeated with respect to their algebraic multiplicities). 
	It is denoted by $\multspec(M)$.
	The {\em spectral radius of $M$} is the real number 
	\[
	\rho(M) = \max\{|\lambda| \mid \lambda \in \multspec(M)\}.
	\]
\end{definition}

We are concerned with non-negative matrices only. 
In this section, we recall that the spectral radius of a non-negative matrix is an eigenvalue of this matrix. 
We then recall asymptotic results about powers of non-negative matrices. 

\begin{theorem}\cite{Gantmacher:59}\label{the:pf}
	If $M$ is a non-negative square matrix, then $\rho(M)$ is an eigenvalue of $M$.
\end{theorem}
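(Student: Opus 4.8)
The plan is to reduce the statement to the Perron theorem for \emph{strictly positive} matrices (which is stated in this section) by a perturbation-and-limit argument; the only ingredient needed beyond that is the continuity of the spectral radius as a function of the matrix entries.

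\emph{Step 1: continuity of $\rho$.} First I would record that $M\mapsto\rho(M)$ is continuous on the space of $n\times n$ real matrices. Indeed, the coefficients of the characteristic polynomial $\chi_M(x)=\det(xI-M)$ are polynomial, hence continuous, functions of the entries of $M$, and the roots of a monic polynomial (viewed as a multiset in $\mathbb{C}$) depend continuously on its coefficients; taking the maximum modulus of the roots, $\rho(M)=\max\{|\lambda|\mid\lambda\in\multspec(M)\}$ is continuous in $M$. This is the single non-elementary fact used, and I would quote the standard continuity of polynomial roots rather than reprove it.

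\emph{Step 2: perturbation and limit.} If $M$ is already strictly positive the claim is exactly the Perron theorem, so assume $M\ge 0$ has at least one zero entry. Let $J$ be the $n\times n$ all-ones matrix and, for $\varepsilon\in(0,1]$, set $M_\varepsilon=M+\varepsilon J$, which is strictly positive. By the Perron theorem, $\rho(M_\varepsilon)$ is an eigenvalue of $M_\varepsilon$ with a strictly positive eigenvector, which I normalize to a Euclidean unit vector $v_\varepsilon$, so $M_\varepsilon v_\varepsilon=\rho(M_\varepsilon)v_\varepsilon$ with $v_\varepsilon>0$ and $\|v_\varepsilon\|=1$. The vectors $v_\varepsilon$ lie on the compact unit sphere, so there is a sequence $\varepsilon_k\downarrow 0$ with $v_{\varepsilon_k}\to v$ for some $v$ with $\|v\|=1$; as a limit of non-negative vectors, $v\ge 0$, and in particular $v\ne 0$. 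Letting $k\to\infty$ in $M_{\varepsilon_k}v_{\varepsilon_k}=\rho(M_{\varepsilon_k})v_{\varepsilon_k}$, using $M_{\varepsilon_k}\to M$ and, by Step 1, $\rho(M_{\varepsilon_k})\to\rho(M)$, we obtain $Mv=\rho(M)v$ with $v\ne 0$. Hence $\rho(M)$ is an eigenvalue of $M$, and in fact it admits the non-negative eigenvector $v$, slightly more than asked.

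The main obstacle is Step 1: one must know that the spectral radius varies continuously with $M$; the remainder is a soft compactness argument. If one insisted on keeping the section fully self-contained, one could replace Step 1 by Gelfand's formula $\rho(M)=\lim_k\|M^k\|^{1/k}$ together with the entrywise monotonicity $0\le A\le B\Rightarrow\|A^k\|\le\|B^k\|$ for a monotone matrix norm, but quoting continuity of roots is shorter.
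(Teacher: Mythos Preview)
The paper does not prove Theorem~\ref{the:pf}: it is quoted with a reference to \cite{Gantmacher:59} and used as a black box throughout Section~\ref{sec:2}. So there is no ``paper's own proof'' to compare against.

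That said, your argument is correct and is the standard textbook route from the strictly positive (or primitive) case to the general non-negative case. Two small remarks. First, you appeal to a \emph{strictly positive} Perron eigenvector for $M_\varepsilon$; the version of the Perron theorem stated in the paper (Theorem~\ref{the:p-f}) does not record the existence of a positive eigenvector, only that $\rho(M_\varepsilon)$ is a simple positive eigenvalue. This is harmless for your purposes: since $\rho(M_\varepsilon)$ is real you may take any real unit eigenvector $v_\varepsilon$, and compactness of the unit sphere still yields a nonzero limit $v$ with $Mv=\rho(M)v$. The non-negativity of $v$ is a pleasant bonus, not a requirement for the statement. Second, note that in the paper's ordering Theorem~\ref{the:pf} is stated \emph{before} Theorem~\ref{the:p-f}; since both are cited external results there is no circularity, but in a self-contained treatment you would want to place the primitive case first.
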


In the literature, in view of Theorem~\ref{the:pf}, we also find 
 the term {\em Perron (or Perron--Frobenius) eigenvalue of $M$} 
to designate the spectral radius $\rho(M)$.

\begin{definition}
	A non-negative square matrix $M$ of size $m$ is said to be {\em primitive} 
	if there exists a positive integer $k$ such that, for all $i,j \in \{1,\dots,m\}$, $(M^k)_{i,j}>0$. 
\end{definition}

For references on the Perron theorem, see \cite{Gantmacher:59,Seneta:1981, Marcus&Lind:1995, Meyer:2000}.

\begin{theorem}[Perron theorem for primitive matrices]\label{the:p-f} 
	Let $M$ be a primitive matrix of size $m$. 
	\begin{itemize}
	  \item[(i)] The spectral radius $\rho(M)$ is positive and is an eigenvalue of $M$ which is algebraically simple.
 	\item[(ii)] Every eigenvalue $\alpha\in\mathbb{C}$ of $M$ 
			such that $\alpha\neq \rho(M)$ satisfies $|\alpha|<\rho(M)$.
	  \item[(iii)] For all $i,j\in\{1,\ldots,m\}$, there exists $c_{i,j}>0$ 
			such that $(M^n)_{i,j}/\rho(M)^n$ converges to $c_{i,j}$ as $n$ tends to infinity.
	\end{itemize}
\end{theorem}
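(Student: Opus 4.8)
The plan is to prove the Perron theorem for primitive matrices by bootstrapping from the bare Perron--Frobenius statement (Theorem~\ref{the:pf}), which already gives that $\rho(M)$ is an eigenvalue. First I would fix a primitive matrix $M$ of size $m$ and let $k$ be a positive integer with $M^k$ strictly positive. Since $M$ is non-negative and not nilpotent (a primitive matrix cannot be nilpotent, as some power is strictly positive), $\rho(M)>0$; indeed $\rho(M^k)=\rho(M)^k$, and a strictly positive matrix has positive spectral radius because its trace, or any single power applied to a positive vector, grows. So part~(i) reduces to showing simplicity, and parts~(i)--(ii) together amount to the statement that $\rho(M)$ is a \emph{dominant} simple eigenvalue. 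The cleanest route is to first prove these for the strictly positive matrix $P=M^k$ and then transfer back to $M$.

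For the strictly positive matrix $P$ I would run the classical argument. Let $\rho=\rho(P)$ and pick a (complex, a priori) eigenvector $x$ with $Px=\rho x$. Applying the triangle inequality entrywise, $\rho|x| = |Px| \le P|x|$, so $y:=|x|$ satisfies $Py \ge \rho y$, $y\ge 0$, $y\ne 0$. If $Py \ne \rho y$ strictly in some coordinate, then applying $P$ once more and using strict positivity of $P$ gives $P(Py-\rho y)>0$, i.e.\ $P(Py) > \rho Py$ coordinatewise; writing $z=Py>0$ this says $Pz > \rho z$, so for small $\varepsilon>0$ we still have $Pz \ge (\rho+\varepsilon) z$ with $z>0$, and iterating, $P^n z \ge (\rho+\varepsilon)^n z$, forcing $\rho(P)\ge\rho+\varepsilon$, a contradiction. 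Hence $Py=\rho y$, so $P$ has a non-negative eigenvector $y$ for $\rho$, and since $y = \rho^{-1}Py$ with $P>0$ we get $y>0$. Uniqueness up to scalar: if $w$ is another real eigenvector for $\rho$, choose $t$ so that $y - t w \ge 0$ has a zero coordinate; it is again an eigenvector for $\rho$, hence either $0$ or strictly positive by the same $y=\rho^{-1}Py$ argument, so it must be $0$. Algebraic simplicity then follows by a standard perturbation/rank argument (the eigenvalue $\rho$ cannot have a Jordan block, else $(P^n)_{ij}/\rho^n$ would be unbounded, contradicting boundedness obtained from $P^n y = \rho^n y$ with $y>0$). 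For part~(ii) applied to $P$: if $P\xi = \alpha\xi$ with $|\alpha|=\rho$ and $\xi\ne 0$, then $|\xi|$ is a non-negative eigenvector for $\rho$ as above, hence a positive multiple of $y$, and equality in the triangle inequality $|P\xi| = P|\xi|$ with $P>0$ forces all entries of $\xi$ to have the same argument, so $\xi$ is (a complex scalar times) a positive vector, whence $\alpha=\rho$.

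Next I would transfer to $M$. Every eigenvalue of $M$ gives an eigenvalue of $P=M^k$ by taking $k$-th powers; a Jordan block of $M$ at an eigenvalue $\beta$ with $\beta^k=\rho$ would produce a Jordan block of $P$ at $\rho$, impossible, and $|\beta|^k = |\beta^k|\le\rho(P)=\rho^k$ gives $|\beta|\le\rho(M)$. To see that $\rho(M)$ itself is simple for $M$ and strictly dominates the other eigenvalues: the Perron eigenvector $y>0$ of $P$ satisfies $M y \ge 0$ and $P(My) = M(Py) = \rho^k (My)$, so $My$ is a non-negative eigenvector of $P$ for $\rho^k$, hence $My = c\,y$ for some scalar $c\ge 0$; then $c>0$ since $y\ne 0$ and $y>0$, and $c^k = \rho^k$ forces $c=\rho$, so $y$ is the Perron eigenvector of $M$ with eigenvalue $\rho=\rho(M)$, and it is unique up to scalar and algebraically simple because its $k$-th power eigenspace in $P$ is. Finally, if $\alpha$ is an eigenvalue of $M$ with $|\alpha|=\rho$ and $\alpha\ne\rho$, then $\alpha^k$ is an eigenvalue of $P$ with $|\alpha^k|=\rho^k$, so $\alpha^k=\rho^k$ by (ii) for $P$; but then one checks via the common eigenvector structure that $\alpha=\rho$, a contradiction — this is the one delicate point, handled by noting that the eigenspace of $M$ is one-dimensional on the unit circle scaled appropriately and using that $M$ acts on it as multiplication by a $k$-th root of $\rho^k$ which must fix the positive cone.

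For part~(iii), I would use the Jordan decomposition of $M$. Write $M = \rho\, E + R$ where $E$ is the spectral projection onto the $\rho$-eigenspace (a rank-one matrix $E = y\,\ell^{\mathsf T}/(\ell^{\mathsf T} y)$ with $\ell>0$ the left Perron eigenvector, obtained by applying the above to $M^{\mathsf T}$, which is also primitive) and $R$ collects the contribution of all other eigenvalues, which by (ii) have modulus $<\rho$; moreover $ER=RE=0$ and $\rho(R)<\rho$, so $R^n/\rho^n \to 0$. Then $M^n = \rho^n E + R^n$, hence $M^n/\rho^n \to E$ entrywise. Since $y>0$ and $\ell>0$, every entry $E_{ij} = y_i \ell_j/(\ell^{\mathsf T} y)$ is strictly positive, giving $c_{i,j}=E_{i,j}>0$. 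The main obstacle, and the part needing the most care, is the clean bootstrapping at the interface between $P=M^k$ and $M$ — specifically ruling out eigenvalues $\alpha\ne\rho$ of $M$ with $|\alpha|=\rho$, since primitivity (as opposed to mere irreducibility) is exactly what kills such imprimitive rotation eigenvalues, and one must use strict positivity of $M^k$, not just irreducibility of $M$, at the key step.
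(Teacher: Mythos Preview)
The paper does not prove this theorem; it is stated as a known result with references to Gantmacher, Seneta, Lind--Marcus, and Meyer, and then used as a black box throughout Section~\ref{sec:2}. There is therefore nothing on the paper's side to compare your argument against.

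Your sketch follows the standard textbook route and is essentially correct. One remark on the step you yourself flag as delicate, namely ruling out eigenvalues $\alpha\neq\rho$ of $M$ with $|\alpha|=\rho$: the resolution is cleaner than your phrasing suggests. If $M\xi=\alpha\xi$ with $|\alpha|=\rho$, then $\xi$ is an eigenvector of $P=M^k$ for $\alpha^k$, and since $|\alpha^k|=\rho^k$ your part~(ii) for $P$ gives $\alpha^k=\rho^k$; but you have already shown the $\rho^k$-eigenspace of $P$ is the line $\mathbb{C}y$ with $y>0$, so $\xi$ is a scalar multiple of $y$, and then $M\xi=\alpha\xi$ combined with $My=\rho y$ forces $\alpha=\rho$ immediately. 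No appeal to ``fixing the positive cone'' is needed. For part~(iii), you should also state explicitly that $E^2=E$ (so that $M^n=\rho^n E + R^n$ rather than a binomial mess) and that $R^n/\rho^n\to 0$ follows from $\rho(R)<\rho$ together with the polynomial bound on Jordan blocks, but these are routine.
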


The following result is classical in the theory of non-negative square matrices.  
For example, see \cite[Section~4.5]{Marcus&Lind:1995} for details.

\begin{prop}\label{prop:power}
	Let $M$ be a non-negative square matrix. 
	Then there exists a permutation matrix $T$ and a positive integer $p$ such that
	\begin{equation}\label{upper-block-triangular-primitive}
	T^{-1}M^pT
	\end{equation}
	is an upper (or lower) block triangular matrix where each square block on the diagonal is either primitive or~$(0)$.
	Furthermore, the least integer $p$ satisfying this condition is computable.
\end{prop}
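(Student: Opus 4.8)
\textit{Proof strategy.} The plan is to translate everything into the language of the directed graph $G(M)$ on vertex set $\{1,\dots,m\}$ with an arc from $i$ to $j$ whenever $M_{i,j}>0$, so that $(M^n)_{i,j}>0$ exactly when $G(M)$ has a walk of length $n$ from $i$ to $j$. First I would compute the strongly connected components $S_1,\dots,S_r$ of $G(M)$ and fix a topological order of the associated condensation (the acyclic digraph on the $S_t$'s). Relabelling the vertices so that each $S_t$ occupies a contiguous block of indices, in an order compatible with the condensation, yields a permutation matrix $T_0$ for which $T_0^{-1}MT_0$ is block upper triangular with diagonal blocks $M[S_1],\dots,M[S_r]$. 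Each such diagonal block is either $(0)$ (a singleton component without a loop), a positive, hence primitive, $1\times1$ block (a singleton with a loop), or an irreducible matrix attached to a component with at least two vertices; write $h_t\ge 1$ for the period of the $t$-th component (with $h_t=1$ in the singleton cases, and ignoring the loopless singletons, which impose no constraint).

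Next I would put $p=\lcm\{h_t\}$ and invoke the classical cyclic structure of an irreducible non-negative matrix (see the references cited for Theorem~\ref{the:p-f}): a component of period $h$ splits into $h$ cyclic classes with arcs running cyclically from one class to the next, so that when $h_t\mid p$ the power $M[S_t]^p$ becomes, after a permutation internal to $S_t$, block diagonal with $h_t$ \emph{primitive} diagonal blocks — indeed walks of length $p\equiv 0\pmod{h_t}$ map each cyclic class into itself, $G(M^p)$ has no arc between two distinct cyclic classes of $S_t$, and the restriction of $M^p$ to a single cyclic class is primitive. Composing these internal permutations with $T_0$ produces a permutation matrix $T$ such that $T^{-1}M^pT$ is block upper triangular with every diagonal block primitive or $(0)$; reversing the order of the blocks turns ``upper'' into ``lower''. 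This settles the existence part.

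For the minimality and computability of $p$, I would establish the converse. If $T^{-1}M^pT$ has the required form, then each strongly connected component of $G(M^p)$ lies inside a single diagonal block (arcs of a block triangular matrix only go forward or stay within a block, whereas a component has arcs in both directions), hence such a component is either a loopless singleton or sits inside a primitive block and is therefore itself primitive, i.e.\ has period $1$. On the other hand strong connectivity in $G(M^p)$ implies strong connectivity in $G(M)$, so the components of $G(M^p)$ refine those of $G(M)$; and a short computation with the cyclic classes shows that a period-$h$ component of $G(M)$ breaks in $G(M^p)$ into $\gcd(h,p)$ components, each of period $h/\gcd(h,p)$. Requiring all of these to have period $1$ forces $h\mid p$ for every component, so $p$ must be a common multiple of all the $h_t$, and $\lcm\{h_t\}$ is thus the least admissible value. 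It is computable because the strongly connected components of $G(M)$ and their periods (the gcd of the cycle lengths inside a component) are computable from $M$.

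The only ingredient that is not elementary graph bookkeeping is the cyclic spectral structure of an irreducible matrix — that the $h$-th power of a period-$h$ irreducible non-negative matrix decomposes into primitive diagonal blocks — which I would quote from the Perron–Frobenius literature; the remaining pieces (topological sorting of the condensation, stability of block triangularity under matrix products, invariance of the component structure of $G(M)$ under conjugation by a permutation) are routine and I would keep them explicit in the self-contained spirit of this section. Accordingly I expect the principal difficulty to be organizational rather than mathematical: arranging the two permutations — one ordering the components, one ordering the cyclic classes inside each component — so that both the final block triangular shape and the exactness of $p$ are transparent.
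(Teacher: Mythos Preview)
Your proposal is correct and follows the standard Perron--Frobenius route: topologically order the strongly connected components of $G(M)$ to get a block triangular form, then use the cyclic-class decomposition of each irreducible block to see that $p=\lcm\{h_t\}$ suffices and is in fact minimal. The paper itself does not prove this proposition; it simply cites \cite[Section~4.5]{Marcus&Lind:1995} and then states Algorithm~1, whose formula for $\p(M)$ (the lcm over non-trivial components of the gcd of simple-cycle lengths) is exactly the quantity you derive. So your argument is not merely compatible with the paper --- it is the detailed write-up that the paper outsources to the reference, and your minimality argument (via the $\gcd(h,p)$ components of period $h/\gcd(h,p)$ in $G(M^p)$) is precisely the justification the paper appeals to when it asserts the correctness of Algorithm~1.
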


The notation $(0)$ stands for the matrix $0_{1\times 1}$ of size $1$.

\begin{definition}\label{def:power}
	Let $M$ be an non-negative square matrix. 
	We let $\p(M)$ denote the least integer $p$ for which 
	there exists a permutation matrix $T$ such that~\eqref{upper-block-triangular-primitive} 
	is an upper (or lower) block triangular matrix where each block on the diagonal is either primitive or~$(0)$. 
\end{definition}

\begin{remark}\label{rem:digraph}
	Any matrix $M\in\N^{m\times m}$ can be interpreted as the {\em adjacency matrix} 
	of a digraph with $m$ vertices. 
	The entry $M_{i,j}$ counts the number of edges from vertex $i$ to vertex $j$. 
	In particular, it is an elementary result in graph theory that $(M^n)_{i,j}$ 
	is the number of walks of length $n$ from vertex $i$ to vertex $j$.  
	The zero blocks $(0)$ on the diagonal of~\eqref{upper-block-triangular-primitive} correspond to 
	single vertices with no loop on them. 
	Finally, the permutation $T$ in \eqref{upper-block-triangular-primitive} 
	simply corresponds to a reordering of the vertices of the graph. 
\end{remark}

The following algorithm computes the value $\p(M)$. The correctness of this algorithm follows from \cite[Chapter 4]{Marcus&Lind:1995} or \cite[Section 2.5]{rigo1}.

\begin{algo}
The input is a non-negative square matrix $M$. The output is the integer $\p(M)$.

\begin{enumerate}[(i)]
	\item Compute the digraph $G(M)$ associated with $M$.
	\item If $G(M)$ is a forest, then $\p(M)$ is $1$.
	\item Else for each non-trivial strongly connected component $I$ of $G(M)$, 
		compute the $\gcd$ of the lengths of the simple cycles in $I$, which we denote by $p_I$. 
	 Then $\p(M)$ is the $\lcm$ of the $p_I$'s.
\end{enumerate}
\end{algo}

The following lemma will be used as a recurrent argument in the proofs of this section.

\begin{lemma} \label{lemma:recurrent-argument}
	Let $M\in\N^{m\times m}$. There exists $N\in\N$ such that for all $i,j\in \{1,\dots,m\}$, 
	all $r\in\{0,\ldots,\p(M)-1\}$ red and all integers $N'\ge \lceil (m+1-r)/\p(M)\rceil$, 
	if $(M^{\p(M)N'+r})_{i,j}>0$ 
	then, for all  integers $n\ge N+N'$, $(M^{\p(M)n+r})_{i,j}>0$.
\end{lemma}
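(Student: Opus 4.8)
The plan is to work with the digraph $G(M)$ of Remark~\ref{rem:digraph} and to control, for a fixed residue $r$ and a fixed pair of vertices $(i,j)$, how the set of achievable walk lengths stabilizes. First I would reduce to the strongly connected components. A walk of length $\p(M)n+r$ from $i$ to $j$ decomposes as a walk from $i$ into some component, a portion through that component, and a walk out to $j$; the portions outside non-trivial strongly connected components have bounded total length (at most $m$ edges through the condensation DAG). So the existence of a walk of length $\p(M)n+r$ from $i$ to $j$ is governed, for $n$ large, by the existence of closed walks of appropriate residual length inside a suitable component.

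The key combinatorial input is the classical number-theoretic fact that inside a non-trivial strongly connected component $I$ of period $p_I$, the set of lengths of closed walks based at any vertex $v\in I$ is, for lengths large enough, exactly the set of multiples of $p_I$ that exceed some threshold (a Frobenius-type / Schur semigroup argument: the cycle-length gcd is $p_I$, so all sufficiently large multiples of $p_I$ are attainable, uniformly in $v$ since $I$ is strongly connected with diameter at most $|I|\le m$). Since $\p(M)=\lcm_I p_I$, each $p_I$ divides $\p(M)$, so once a walk of length $\p(M)N'+r$ through $I$ exists, one can inflate it by any multiple of $\p(M)$ (hence by any multiple of $p_I$) by padding a closed walk inside $I$, provided the padded closed walk is long enough to be in the "all multiples" regime. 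This is what forces the monotonicity $(M^{\p(M)n+r})_{i,j}>0$ for all $n\ge N+N'$ once it holds at $n=N'$: the increment from $n$ to $n+1$ adds exactly $\p(M)$ to the length, which is absorbed inside the component. The hypothesis $N'\ge\lceil(m+1-r)/\p(M)\rceil$ is there to guarantee the walk is long enough that it genuinely passes through a non-trivial component (a walk that short could be forced to live in the forest part, where no inflation is possible).

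Concretely, the steps I would carry out are: (1) fix the decomposition of walks via the condensation of $G(M)$ and bound the "transient" length by $m$; (2) invoke the Schur/Chicken-McNugget argument to obtain, for each non-trivial component $I$ and each base vertex, a threshold $L_I$ beyond which every multiple of $p_I$ is a closed-walk length, and set $N$ to be $\max_I L_I/p_I$ (adjusted by the transient bound, and made uniform over the finitely many components and vertices); (3) use the length hypothesis on $N'$ to locate, given a positive entry of $M^{\p(M)N'+r}$, an actual walk that enters some non-trivial component $I$; (4) for $n\ge N+N'$, build a walk of length $\p(M)n+r$ by taking that walk and splicing in a closed walk of length $\p(M)(n-N')$ at the base vertex in $I$ — legitimate because $\p(M)(n-N')\ge \p(M)N\ge L_I$ is a large enough multiple of $p_I$. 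The main obstacle is making the constants genuinely uniform: $N$ must not depend on $i$, $j$, $r$, or on which component is used, so one has to take the worst threshold over all components and all base vertices simultaneously and confirm that the length bound $\lceil(m+1-r)/\p(M)\rceil$ really does force passage through a non-trivial component regardless of $r\in\{0,\dots,\p(M)-1\}$.
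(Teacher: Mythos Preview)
Your proposal is correct and follows the same overall architecture as the paper: interpret $M$ as an adjacency matrix, observe that a walk of length $\p(M)N'+r\ge m+1$ must pass through some vertex $k$ lying on a cycle, and then pad the walk at $k$ by a closed walk of length $\p(M)(n-N')$ to reach any $n\ge N+N'$.

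The one difference is in how the padding step is justified. You invoke a Frobenius/Schur numerical-semigroup argument inside each non-trivial strongly connected component $I$ (closed-walk lengths at a vertex of $I$ eventually hit every multiple of $p_I$, and $p_I\mid\p(M)$), then take $N$ as the maximum threshold over components and base vertices. The paper bypasses this by appealing directly to Proposition~\ref{prop:power}: since $T^{-1}M^{\p(M)}T$ has primitive diagonal blocks, one simply chooses $N$ so that $P^N>0$ for every primitive block $P$, whence $(M^{\p(M)n})_{k,k}>0$ for all $n\ge N$ and all $k$ on a cycle, uniformly and in one line. Your route works but re-derives content already packaged into the definition of $\p(M)$; the paper's is shorter because it cashes in that earlier structural work.
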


\begin{proof}
We use the graph interpretation of non-negative square matrices. 
Since the graph corresponding to $M$ has $m$ vertices,
for all $i,j\in\{1,\ldots,m\}$, every walk of length at least $m+1$ from vertex $i$ to vertex $j$ 
goes through a vertex $k$ that belongs to a cycle. 
In particular, if the permutation $T$ in \eqref{upper-block-triangular-primitive} maps $k$ to $\ell$, 
then $(T^{-1}M^{\p(M)}T)_{\ell,\ell}$ is an element of a primitive block.
 
Let $N\in\N$ be such that $P^N>0$ for all primitive blocks $P$ 
on the diagonal in \eqref{upper-block-triangular-primitive}. 
In particular, $P^n>0$ for all $n\ge N$. 
This means that $(T^{-1}M^{\p(M)n}T)_{\ell,\ell}=(M^{\p(M)n})_{k,k}>0$ for all $n\ge N$.

Let $i,j\in \{1,\dots,m\}$, $r\in\{0,\ldots,\p(M)-1\}$ and $N'\ge \lceil (m+1-r)/\p(M)\rceil$ 
be such that $(M^{\p(M)N'+r})_{i,j}>0$. 
This means that there exists a walk of length $\p(M)N'+r  \ge m+1$ 
from the vertex $i$ to the vertex $j$ in the graph corresponding to $M$. 
From the above discussion, there exist $u,v\in\N$ and a vertex $k$ 
such that $u+v=\p(M)N'+r$, $(M^u)_{i,k}>0$, $(M^v)_{k,j}>0$ and $(M^{\p(M)n})_{k,k}$ for all $n\ge N$.
Then, for all $n\ge N+N'$, 
\[
	(M^{\p(M)n+r})_{i,j}\ge (M^u)_{i,k}\, (M^{\p(M)(n-N')})_{k,k}\, (M^v)_{k,j} >0.
\]
\end{proof}

The next lemma essentially  follows from  Theorem~\ref{the:p-f}~(iii) 
and from Lemma~\ref{lemma:fraction} below 
which is a particular case of a theorem of Darboux (see for instance~\cite[Theorem 2.2]{Pemantle&Wislon}).

\begin{lemma} \label{lemma:upper-triangular-primitive}
	Let $M\in\N^{m\times m}$ be an upper block triangular matrix of the form
	\begin{eqnarray}\label{upper-triangular-primitive}
		M = 
		\left(
		\begin{array}{cccc}
		P_1	& B_{1,2}  & \cdots	& B_{1,h}	\\
		0		& P_2	& \ddots	& \vdots 	\\
		\vdots 	& \ddots	& \ddots 	& B_{h-1,h} \\
		0		& \cdots 	& 0		& P_h	
		\end{array}
		\right)	
	\end{eqnarray}
	where the diagonal square blocks $P_\ell$ are either primitive or $(0)$. 
	For all $i,j\in\{1,\ldots,m\}$, either $(M^n)_{i,j} = 0$ for all sufficiently large $n$, or  
	there exist $\lambda\in \multspec(M)\cap \R_{\ge 1}$ and $d\in \N$ 
	such that $(M^n)_{i,j} = \Theta(n^d\, \lambda^n)$.

	More precisely, in the second case, if $M_{i,j}$ is an entry of the block $B_{k,\ell}$ with $1\le k< \ell\le h$  
	or an entry of $P_\ell$ (in which case we set $k=\ell$ in the formulas), 
	then  we have
	\begin{align}
		\label{eq:lambda-primitive}
		\lambda & =\max_{\substack{k=m_1< m_2< \cdots < m_t=\ell\\ 
		B_{m_1,m_2}\neq 0,\  B_{m_2,m_3} \neq 0,\  \ldots,\  B_{m_{t-1},m_t} \neq 0}}
		\max\{\rho(P_{m_s})\mid s\in\{1,\ldots,t\}\} ; \\
	\label{eq:d-primitive}
	d+1 & =\max_{\substack{k=m_1< m_2< \cdots< m_{t-1} < m_t=\ell\\ 
	B_{m_1,m_2}\neq 0,\  B_{m_2,m_3} \neq 0,\  \ldots,\  B_{m_{t-1},m_t} \neq 0}}
	\#\{s\in\{1,\ldots,t\}\mid \rho(P_{m_s})=\lambda \}.
	\end{align}
	In particular, the asymptotic behaviors of $(M^n)_{i,j}$ corresponding to entries of a given block $B_{k,\ell}$ coincide.
\end{lemma}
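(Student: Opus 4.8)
The plan is to expand $M^n$ blockwise, reduce the question to a one‑variable coefficient‑asymptotics problem for each ``chain of diagonal blocks'', and then recombine. Concretely, since $M$ is upper block triangular, expanding $M^n=M\cdots M$ and grouping the successive diagonal blocks a walk meets shows that the submatrix of $M^n$ lying in block row $k$ and block column $\ell$ (with $k\le\ell$) equals
\[
\sum_{k=m_1<m_2<\cdots<m_t=\ell}\ \ \sum_{\substack{n_1+\cdots+n_t=n-(t-1)\\ n_1,\dots,n_t\ge 0}} P_{m_1}^{\,n_1}B_{m_1,m_2}P_{m_2}^{\,n_2}B_{m_2,m_3}\cdots B_{m_{t-1},m_t}P_{m_t}^{\,n_t},
\]
the outer sum running over strictly increasing sequences of block indices (the trivial chain $t=1$ occurring only when $k=\ell$); for $k>\ell$ the submatrix is $0$ and $(M^n)_{i,j}=0$ for all $n$. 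A chain containing a zero block $B_{m_s,m_{s+1}}$ contributes nothing, so only chains with all $B_{m_s,m_{s+1}}\neq 0$ matter. Fixing an entry $(i,j)$ in the block $B_{k,\ell}$ (for $k<\ell$), or in $P_\ell$ (in which case we put $k=\ell$), this yields $(M^n)_{i,j}=\sum_C S_C(n)$, a finite sum of nonnegative terms over the chains $C$ from $k$ to $\ell$, where $S_C(n)$ is the $(i,j)$-entry of the contribution displayed above.

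The core step, and the one I expect to require the most care, is the analysis of a single $S_C(n)$. Fix a chain $C\colon k=m_1<\cdots<m_t=\ell$ with all $B_{m_s,m_{s+1}}\neq 0$. Expanding the matrix products over the finitely many choices of the pairs of vertices joined by the jump edges, $S_C(n)$ becomes an $\N$-linear combination of convolutions $\sum_{n_1+\cdots+n_t=n-(t-1)}\prod_{s=1}^t (P_{m_s}^{\,n_s})_{a_s,b_s}$. If $P_{m_s}$ is primitive, Theorem~\ref{the:p-f} gives $(P_{m_s}^{\,n})_{a,b}\sim c_{a,b}\,\rho(P_{m_s})^n$ with $c_{a,b}>0$ and $\rho(P_{m_s})$ strictly dominant in modulus among the eigenvalues, so $G_s^{(a,b)}(x)=\sum_n (P_{m_s}^{\,n})_{a,b}x^n$ is rational, analytic on $\{|x|<1/\rho(P_{m_s})\}$, with a simple pole at $x=1/\rho(P_{m_s})$ of positive ``residue constant'' $c_{a,b}$ and no other singularity of that modulus; if $P_{m_s}=(0)$ then $G_s^{(a,b)}(x)=1$. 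Hence $\sum_n S_C(n)\,x^{n-(t-1)}$ is rational; writing $\lambda_C=\max_s\rho(P_{m_s})$ and $q=\#\{s:\rho(P_{m_s})=\lambda_C\}$, a short computation of $\lim_{x\to 1/\lambda_C}(1-\lambda_C x)^q(\cdot)$ shows that, \emph{because all the constants $c_{a,b}$ are positive}, there is no cancellation and this function has a pole of order exactly $q$ at $x=1/\lambda_C$; provided $\lambda_C>0$ --- hence $\lambda_C\ge 1$, as a primitive non-negative integer matrix has spectral radius at least $1$ --- this pole is the unique singularity of smallest modulus. Lemma~\ref{lemma:fraction} then yields $S_C(n)=\Theta\!\big(n^{q-1}\lambda_C^{\,n}\big)$; set $d_C=q-1$. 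When instead every $P_{m_s}$ equals $(0)$, the only nonzero term has $n_1=\cdots=n_t=0$, so $S_C(n)$ is supported on $n=t-1$ and is eventually $0$. The main obstacle is precisely this step: justifying the order of the pole and the absence of cancellation (which is where nonnegativity enters) and covering the borderline case $\lambda_C=1$ so that Lemma~\ref{lemma:fraction} applies.

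Next I would determine which chains actually contribute. If all $B_{m_s,m_{s+1}}\neq 0$ and $C$ contains at least one primitive block, then $S_C(n)>0$ for all large $n$: primitive blocks are strongly connected and aperiodic while $(0)$-blocks are loopless singletons, so a walk can be routed from any vertex $i$ of block $m_1$ to any vertex $j$ of block $m_t$ through $C$ (take $n_s=0$ on the $(0)$-blocks and absorb the remaining length into the primitive ones), using only the \emph{blocks} of $i$ and $j$, not $i,j$ themselves. Consequently $(M^n)_{i,j}$ is eventually $0$ exactly when every chain from $k$ to $\ell$ either has a zero $B_{m_s,m_{s+1}}$ or consists only of $(0)$-blocks --- a condition depending on $k,\ell$ alone --- which proves the last assertion of the lemma.

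To conclude: if no chain from $k$ to $\ell$ contributes, $(M^n)_{i,j}=0$ for large $n$; otherwise, summing the nonnegative $S_C(n)$ and keeping the fastest-growing terms, $(M^n)_{i,j}=\Theta(n^d\lambda^n)$ with $\lambda=\max\lambda_C$ over the contributing chains and $d+1=\max\{d_C+1:\lambda_C=\lambda\}$. Since a chain with all $B_{m_s,m_{s+1}}\neq 0$ but no primitive block has $\lambda_C=0<\lambda$ and $\#\{s:\rho(P_{m_s})=\lambda\}=0$, adjoining such chains to these maxima changes nothing, so $\lambda$ and $d$ are given precisely by \eqref{eq:lambda-primitive} and \eqref{eq:d-primitive}; finally $\lambda$ is the spectral radius of one of the primitive diagonal blocks, hence lies in $\multspec(M)\cap\R_{\ge 1}$.
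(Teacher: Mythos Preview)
Your proof is correct and follows essentially the same approach as the paper: the same block/chain decomposition of $M^n$ via the ``path graph'', the same appeal to Perron's theorem for the primitive diagonal blocks, and the same key Lemma~\ref{lemma:fraction} for the asymptotics. The only difference is cosmetic---where the paper bounds the convolution sums $\sum_{i_1+\cdots+i_t=n}\prod_j\beta_j^{i_j}$ by explicit sandwiching estimates (and invokes Lemma~\ref{lemma:fraction} only for the upper bound), you package the same computation as a product of rational generating functions and read off the order of the dominant pole directly, which is arguably tidier.
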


\begin{lemma}
\label{lemma:fraction}
Let $(a_n)_{n\ge0}\in\R^\N$. 
Suppose that its generating function is rational:
\[\sum_{n\ge0}a_nx^n =\frac{P}{Q}\]
where $P,Q \in \R[x]$ are coprime.
Assume further that $Q$ has a single root $\alpha$ of minimal modulus 
and that this root is non-zero and has multiplicity $m$.
Then there exists $c\in\R\setminus\{0\}$ such that $a_n/(n^{m-1}\alpha^{-n})$ 
converges to $c$ as $n$ tends to infinity.
\end{lemma}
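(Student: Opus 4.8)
The plan is to prove Lemma~\ref{lemma:fraction} by partial fraction decomposition of $P/Q$ over $\mathbb{C}$, isolating the contribution of the dominant root $\alpha$ and showing that it dictates the asymptotics. First I would write, since $\deg P < \deg Q$ can be assumed after polynomial division changes only finitely many $a_n$, a decomposition
\[
\frac{P(x)}{Q(x)} = \sum_{j} \sum_{s=1}^{m_j} \frac{c_{j,s}}{(1-x/\alpha_j)^s},
\]
where the $\alpha_j$ range over the distinct roots of $Q$ (here I use that $\gcd(P,Q)=1$, so none of these terms vanish and in particular the coefficient $c_{j_0,m}$ attached to the highest power $(1-x/\alpha)^{-m}$ of the minimal-modulus root $\alpha = \alpha_{j_0}$ is non-zero). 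Then I would invoke the standard power series expansion
\[
\frac{1}{(1-x/\alpha)^s} = \sum_{n\ge 0} \binom{n+s-1}{s-1}\alpha^{-n}x^n,
\]
so that $a_n = \sum_{j,s} c_{j,s}\binom{n+s-1}{s-1}\alpha_j^{-n}$ for all $n$ large enough.

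Next I would extract the leading term. For each $j$ the inner sum $\sum_{s} c_{j,s}\binom{n+s-1}{s-1}\alpha_j^{-n}$ is $\alpha_j^{-n}$ times a polynomial in $n$ of degree $m_j-1$; its modulus is $\Theta(n^{m_j-1}|\alpha_j|^{-n})$ unless the polynomial is identically zero, which cannot happen for $j=j_0$ since $c_{j_0,m}\neq 0$. Dividing through by $n^{m-1}\alpha^{-n}$, the $j=j_0$ contribution tends to $c := c_{j_0,m}/(m-1)! \neq 0$, because $\binom{n+m-1}{m-1}/n^{m-1}\to 1/(m-1)!$ and the lower-order terms $s<m$ contribute $o(n^{m-1})$. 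Every other contribution $j\neq j_0$ is $O(n^{m_j-1}|\alpha_j|^{-n})$ with $|\alpha_j|>|\alpha|$, so after dividing by $n^{m-1}\alpha^{-n}$ it behaves like $O\!\left(n^{m_j-m}(|\alpha|/|\alpha_j|)^{n}\right)$, which tends to $0$ since the geometric factor $(|\alpha|/|\alpha_j|)^n$ decays and dominates any polynomial factor in $n$. Hence $a_n/(n^{m-1}\alpha^{-n}) \to c$.

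The main obstacle is purely bookkeeping: making sure the polynomial-division step is handled cleanly (it only perturbs finitely many $a_n$, hence does not affect a limit), and being careful that the uniqueness-of-minimal-modulus-root hypothesis is exactly what guarantees that there is a single dominant term rather than an oscillating sum of several; the coprimality hypothesis is what guarantees $c\neq 0$. No deep input is needed — this is the elementary case of Darboux's method, and the argument is self-contained modulo the standard generating-function identity for $(1-x/\alpha)^{-s}$.
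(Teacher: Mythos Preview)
Your proposal is correct and follows essentially the same route as the paper: Euclidean division to reduce to the proper case, partial fractions over $\mathbb{C}$, the binomial expansion of $(1-x/\alpha_j)^{-s}$, and then comparison of the resulting terms. The paper's write-up is terser at the end (it stops at ``Hence the result'' where you actually carry out the estimate), and it explicitly records that $\alpha\in\R$ (forced by the conjugate-pairs argument, since a non-real root of minimal modulus would not be unique) so that $c_{j_0,m}$, and hence your $c$, is automatically real; you leave this implicit, but it follows anyway from the fact that the $a_n$ are real and the limit exists.
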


\begin{proof}
Let $x_1,\dots,x_p$ be the distinct roots of $Q$ with respective multiplicities $m_1,\dots,m_p$.
W.l.o.g we assume that $x_1=\alpha$  and $m_1=m$. 
By Euclidean division and then decomposing into partial fractions, 
there exist $C\in\R[x]$ and $c_{i,j}\in\mathbb{C}$, with $1 \leq i \leq p$ and $1 \leq j \leq m_i$, such that
\[
	\frac{P}{Q} =C+  \sum_{i=1}^p \sum_{j=1}^{m_i} \frac{c_{i,j}}{(1-x/x_i)^j}.
\]
Moreover, for each $i$ corresponding to a real root $x_i$ of $Q$, we know that $c_{i,m_i}$ is a nonzero real number. 
In particular, this is the case for $c_{1,m}$ since $\alpha\in\R$.

As $1/(1- x)^{t+1}=\sum_{n \geq 0} \binom{n+t}{t} x^n$ if $t\in\N$, we obtain that, for $n$ large enough,
\[
	a_n =  \sum_{i=1}^p \sum_{j=1}^{m_i} c_{i,j} \tbinom{n+j-1}{j-1} \frac{1}{x_i^n}.
\]
Hence the result.
\end{proof}

\begin{proof}[Proof of Lemma~\ref{lemma:upper-triangular-primitive}]
From Theorem~\ref{the:p-f}~(iii), 
we know that $P_\ell^n=C_{\ell}\, (\rho(P_\ell)^n+o(\rho(P_\ell)^n))$
where $C_\ell$ is a positive matrix.
Therefore, for all $i,j$ such that $M_{i,j}$ belongs to a block $P_\ell$ on the diagonal, 
$(M^n)_{i,j}=\Theta(\rho(P_\ell)^n)$.
Note that $\rho(P_\ell)\in\multspec(M)$ and if $P_\ell\neq (0)$ then $\rho(P_\ell)\ge 1$.
Also note that, in this case, we have found $\lambda=\rho(P_\ell)$ and $d=0$, 
which is coherent with~\eqref{eq:lambda-primitive} and~\eqref{eq:d-primitive} if we set $k=\ell$.

The blocks above the diagonal are obtained as sums of products 
involving the diagonal blocks $P_1,\ldots,P_h$ and the blocks above the diagonal. 

Consider the example where $M$ is the upper block triangular matrix given by 
\[
	M= \begin{pmatrix} 
		    A&D&F\\ 0&B&E\\ 0&0&C\\
		\end{pmatrix}
\]
where $A,B,C$ are primitive matrices or $(0)$.
We associate a labeled graph, called a {\em path graph} \cite{Lindqvist:1989}, with such a block matrix. 
Its set of vertices is the set of blocks on the diagonal. 
The block in $i$th row and $j$th column is the label of the edge from vertex $i$ to vertex $j$. 
The path graph of $M$ is depicted in Figure~\ref{fig:path}.
 \begin{figure}[htbp]
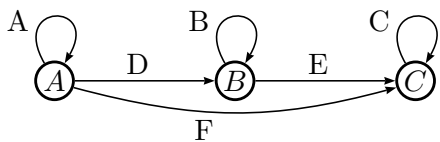

        \centering
\VCDraw{
        \begin{VCPicture}{(0,-1.3)(8,1.8)}
\State[C]{(8,0)}{c} 
\State[B]{(4,0)}{b}
\State[A]{(0,0)}{a}

\ArcR{a}{c}{$F$}
\LoopN{c}{$C$}
\LoopN{b}{$B$}
\LoopN{a}{$A$}
\EdgeL{a}{b}{$D$}
\EdgeL{b}{c}{$E$}
\end{VCPicture}
}
        \caption{The path graph associated with $M$}.
        \label{fig:path}
 \end{figure} 

As in Remark \ref{rem:digraph}, let $G$ be the directed graph whose adjacency matrix is $M$. 
Fix a vertex $a$ (resp. $c$) in the primitive component corresponding to $A$ (resp. $C$) 
or if $A$ (resp. $C$) is (0), then $a$ (resp. $c$) is the single vertex of the corresponding component. 
There are two types of walks of length $n$ from $a$ to $c$. 

\begin{itemize}
\item First, there are those that can be decomposed as a walk of length $i$ from $a$ to a vertex $a'$ in $A$,
	followed by an edge from $a'$ to a vertex $c'$ in $C$ 
	and ending with a walk of length $n-i-1$ from $c'$ to $c$. 
	The number of these walks is given by $(A^iFC^{n-i-1})_{a,c}$. 
\item Second, there are those that can be decomposed as a walk of length $i$ 
	from $a$ to a vertex $a'$ in $A$,
	followed by an edge from $a'$ to a vertex $b$ in $B$, 
	then a walk of length $j$ from $b$ to a vertex $b'$ in $B$, 
	followed by an edge from $b'$ to a vertex $c'$ in $C$,
	and ending with a walk of length $n-i-j-2$ from $c'$ to $c$.
	The number of these walks is given by $(A^iDB^jEC^{n-i-j-2})_{a,c}$.
\end{itemize}

The total number of walks of length $n$ from any $a\in A$ to any $c\in C$
is given by the entry $(M^n)_{a,c}$, 
which belongs to the block corresponding to $F$ in $M^n$. 
From the definition of the matrix product, the block corresponding to $F$ (resp., $D$, $E$) in $M^n$ 
is the sum of the labels of the walks of length $n$ from $A$ to $C$ (resp., $A$ to $B$, $B$ to $C$)
in the path graph depicted in Figure~\ref{fig:path}. 
Indeed, in our example, the upper-right block in $M^n$ is 
\[
	\sum_{\substack{i+j+k=n-2\\ i,j,k\ge 0}} A^iD B^j E C^k
	+\sum_{\substack{i+j=n-1\\i,j\ge 0}} A^iFC^j.
\]

If $\rho(A)=\rho(B)=\rho(C)$, $D\neq0$ and $E\neq0$,
since $\#\{(i,j,k)\in\N^3\mid i+j+k=n\}=\binom{n+2}{2}$, 
the entries of this block have a behavior in $\Theta(n^2\,\rho(A)^n)$. 
Note that, since $A, B$ and $C$ are primitive or $(0)$, 
there exist $i,j,k\in\N$ such that all entries of $A^iD B^j E C^k$ are simultaneously positive 
if and only if $D\neq0$ and $E\neq0$. 

If  $\beta=\rho(A)=\rho(B)>\rho(C)=\gamma$, $D\neq0$ and $E\neq0$,
then the entries of this block have a behavior in $\Theta(n\,\rho(A)^n)$ because
\[
	\sum_{\substack{i+j+k=n\\ i,j,k\ge 0}} \beta^{i+j} \gamma^k
	=\gamma^{n}\sum_{k=0}^n  (n-k+1) \left(\frac{\beta}{\gamma}\right)^{n-k}
	=\gamma^{n}\sum_{k=0}^n  (k+1) \left(\frac{\beta}{\gamma}\right)^{k}
\]
and the conclusion follows from \eqref{eq:gen_case} with $m=1$.

Let us turn to the general case. 
Recall that for $m\in\mathbb{N}$ and $\lambda\in\R_{>1}$,  we have  
\begin{equation}
    \label{eq:gen_case}
    \sum_{i=0}^n i^m  \lambda^i = \Theta(n^m \lambda^n).
\end{equation}
Indeed, we have
\[
	\int_0^n x^m \lambda^x dx \leq \sum_{i=0}^n i^m  \lambda^i \leq \int_0^{n+1} x^m \lambda^x dx
\]
and the result follows by using integration by parts and an induction on $m$. 
Note that the exact expansion of $\sum_{i=0}^n i^m  \lambda^i$ can be explicitly given 
(see for example~\cite{foata:2010}).
Also, it is a classical result of enumerative combinatorics \cite{feller:1950} that
\[
	\#\{(i_1,\ldots,i_q)\in\N^q\mid i_1+\cdots+i_q=n\}=\tbinom{n+q-1}{q-1}=\Theta(n^{q-1}).
\]

The block corresponding to $B_{k,\ell}$, $k<\ell$, in $M^n$ is a sum of terms of the form
\[
	\sum_{\substack{i_1+\cdots+i_t=n-t+1\\ i_1,\ldots,i_t\ge 0}}P_{m_1}^{i_1}B_{m_1,m_2}P_{m_2}^{i_2}
			\cdots B_{m_{t-1},m_t}P_t^{i_t}.
\]
In the above expression, we count the number of walks of length $n$ 
starting from a vertex in $P_{m_1}$, ending in a vertex in $P_{m_t}$ 
and going through the components $P_{m_2},\ldots,P_{m_{t-1}}$. 
For $n$ large enough, such a walk exists if and only if the $B_{m_i,m_{i+1}}$'s are all non-zero. 
If we consider the spectral radii of the blocks $P_{m_1},\ldots,P_{m_t}$ 
and assuming that $q$ of them are maximal, to derive the asymptotic behavior of an element, 
we have to estimate sums of the following form.

If $\beta_1=\cdots=\beta_q>\beta_{q+1}\ge \cdots\ge \beta_t\ge 1$ where $q\in\{1,\ldots,t-1\}$, then
\begin{equation}    \label{eq:asympt}
	\sum_{\substack{i_1+\cdots+i_t
	=n\\ i_1,\ldots,i_t\ge 0}} \prod_{j=1}^t \beta_j^{i_j}
	=\sum_{i=0}^n \tbinom{i+q-1}{q-1}\beta_1^i 
		\sum_{\substack{i_{q+1}+\cdots+i_t=n-i\\ i_{q+1},\ldots,i_t\ge 0}} 
		\prod_{j=q+1}^t \beta_j^{i_j}.
\end{equation}
We get
\[
	\beta_t^{n} \sum_{i=0}^n \tbinom{i+q-1}{q-1}  \left(\frac{\beta_1}{\beta_{t}}\right)^{i} 
	\le \eqref{eq:asympt} 
	\le \beta_{q+1}^{n} \sum_{i=0}^n  \tbinom{i+q-1}{q-1}\tbinom{n-i+t-q-1}{t-q-1} \left(\frac{\beta_1}{\beta_{q+1}}\right)^{i}
\]
and both the left hand side and the right hand side are in $\Theta(n^{q-1} \beta_1^n)$.
For the right hand side, this is a consequence of Lemma~\ref{lemma:fraction}. 
Indeed, the generating function of the sequence
\[
	\left(\sum_{i=0}^n \binom{i+q-1}{q-1}\binom{n-i+t-q-1}{t-q-1} \beta^i\right)_{n\ge 0}
\] 
is the Cauchy product
\[
	\left( \sum_{i \geq 0} \tbinom{i+q-1}{q-1} (\beta x)^i \right) 
	\left( \sum_{j \geq 0}\tbinom{j+t-q-1}{t-q-1} x^j \right)
			=\frac{1}{(1-\beta x)^q(1-x)^{t-q}}.
\]

If $q=t$, that is if $\beta_1=\cdots= \beta_t\ge 1$, then again
\begin{equation*}  
	\sum_{\substack{i_1+\cdots+i_t=n\\ i_1,\ldots,i_t\ge 0}} \prod_{j=1}^t \beta_j^{i_j}
	= \tbinom{n+q-1}{q-1} \beta_1^n
	=\Theta(n^{q-1}\beta_1^n).
\end{equation*}

This explains, in the statement of the result, the extra $n^d$ factor that may occur 
if several diagonal blocks have the same spectral radius. 
In other words, $\lambda=\beta_1$ is the largest spectral radius
that one can encounter on a walk from the vertex $i$ to the vertex $j$ and 
$d+1=q$ counts the maximal number of blocks on the diagonal having spectral radius $\lambda$ 
that one can encounter on the considered walks.
\end{proof}

\begin{prop}\label{prop:lambda-d-ijr}
	Let $M\in\N^{m\times m}$. 
	Then, for all $i,j \in \{1,\dots,m\}$ and $r\in\{0,\ldots,\p(M)-1\}$, 
	either $(M^{\p(M)n+r})_{i,j}=0$ for all sufficiently large $n$, 
	or there exist $\lambda\in\multspec(M^{\p(M)})\cap \R_{\ge 1}$ and $d\in\N$ 
	such that $(M^{\p(M)n+r})_{i,j}=\Theta(n^d\lambda^n)$.
\end{prop}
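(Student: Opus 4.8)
The plan is to obtain this proposition as a corollary of Lemma~\ref{lemma:upper-triangular-primitive}, with the modulus $\p(M)$ absorbed through Proposition~\ref{prop:power} and the residue $r$ handled by a matrix factorization. Write $p=\p(M)$. By Definition~\ref{def:power} there is a permutation matrix $T$ for which $N:=T^{-1}M^{p}T$ is upper block triangular (I treat only this case, the lower triangular one being symmetric) with each diagonal block primitive or $(0)$. Since $T$ is a permutation matrix, $N^{n}=T^{-1}M^{pn}T$, so there is a fixed permutation $\pi$ of $\{1,\dots,m\}$ with $(M^{pn})_{i,k}=(N^{n})_{\pi(i),\pi(k)}$ for all $n$ and all $i,k$, and moreover $\multspec(N)=\multspec(M^{p})$. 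Applying Lemma~\ref{lemma:upper-triangular-primitive} to $N$, I get that for every pair $(i,k)$ either $(M^{pn})_{i,k}=0$ for all large $n$, or there are $\lambda_{i,k}\in\multspec(M^{p})\cap\R_{\ge 1}$ and $d_{i,k}\in\N$ with $(M^{pn})_{i,k}=\Theta(n^{d_{i,k}}\lambda_{i,k}^{n})$.

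Next I would reduce the general exponent $pn+r$ to these estimates. Fix $i,j\in\{1,\dots,m\}$ and $r\in\{0,\dots,p-1\}$. From $M^{pn+r}=M^{pn}M^{r}$ we get
\[
	(M^{pn+r})_{i,j}=\sum_{k=1}^{m}(M^{pn})_{i,k}\,(M^{r})_{k,j}.
\]
Let $K$ be the set of indices $k$ such that $(M^{r})_{k,j}>0$ and $(M^{pn})_{i,k}$ is not eventually $0$. If $K=\emptyset$, then every summand is eventually $0$, hence $(M^{pn+r})_{i,j}=0$ for all large $n$ and we are in the first alternative. Otherwise, set $\lambda=\max_{k\in K}\lambda_{i,k}$ and $d=\max\{d_{i,k}\mid k\in K,\ \lambda_{i,k}=\lambda\}$; then $\lambda\in\multspec(M^{p})\cap\R_{\ge 1}$ and $d\in\N$.

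It then remains to check that $(M^{pn+r})_{i,j}=\Theta(n^{d}\lambda^{n})$, and the key point is that $M$ being non-negative rules out cancellation in this finite sum. Each summand with $k\in K$ equals $(M^{r})_{k,j}\cdot\Theta(n^{d_{i,k}}\lambda_{i,k}^{n})=\Theta(n^{d_{i,k}}\lambda_{i,k}^{n})$, which is $O(n^{d}\lambda^{n})$ by the choice of $\lambda$ and $d$; the remaining summands are eventually $0$. Summing the $m$ terms yields the upper bound $(M^{pn+r})_{i,j}=O(n^{d}\lambda^{n})$. For the lower bound, choose $k_{0}\in K$ with $\lambda_{i,k_{0}}=\lambda$ and $d_{i,k_{0}}=d$; since all entries of $M^{pn+r}$ and of the individual products are non-negative, $(M^{pn+r})_{i,j}\ge(M^{pn})_{i,k_{0}}(M^{r})_{k_{0},j}=\Theta(n^{d}\lambda^{n})$, so $(M^{pn+r})_{i,j}=\Omega(n^{d}\lambda^{n})$. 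Combining the two bounds gives the claim.

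The proof has no genuinely hard step: all the substance lies in Lemma~\ref{lemma:upper-triangular-primitive}. The only parts requiring care are bookkeeping ones — transporting the asymptotic estimates through the permutation conjugation and through the factorization $M^{pn+r}=M^{pn}M^{r}$, and the elementary but essential observation that a finite sum of non-negative sequences, one of which is $\Theta(n^{d}\lambda^{n})$ while all are $O(n^{d}\lambda^{n})$, is again $\Theta(n^{d}\lambda^{n})$.
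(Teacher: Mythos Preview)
Your proof is correct and follows essentially the same approach as the paper: reduce to Lemma~\ref{lemma:upper-triangular-primitive} for the matrix $M^{p}$ (after conjugating by the permutation from Proposition~\ref{prop:power}), then handle the residue via the factorization $M^{pn+r}=M^{pn}M^{r}$ and take the maximum growth among the contributing indices. The only cosmetic difference is that the paper invokes Lemma~\ref{lemma:recurrent-argument} to argue that the terms with $k\notin K$ are eventually zero, whereas you obtain this directly from the dichotomy in Lemma~\ref{lemma:upper-triangular-primitive}; your route is slightly cleaner but not substantively different.
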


\begin{proof}
Let  $N\in\N$ be a constant such as in Lemma~\ref{lemma:recurrent-argument}.
Let  $i,j \in \{1,\dots,m\}$ and $r\in \{0,\ldots,\p(M)-1\}$. 
Suppose that $(M^{\p(M)n+r})_{i,j}$ is not ultimately vanishing. 
Hence there is some $N'\ge \lceil(m+1-r)/\p(M)\rceil$
such that $(M^{\p(M)N'+r})_{i,j}>0$.
By Lemma~\ref{lemma:recurrent-argument}, 
\[
	(M^{\p(M)(N+N')+r})_{i,j}=\sum_{k=1}^m (M^{\p(M)(N+N')})_{i,k} (M^r)_{k,j}>0.
\] 
Hence,
there exists $k$ such that $(M^{\p(M)(N+N')})_{i,k}(M^r)_{k,j}>0$. 
In particular, by Lemma~\ref{lemma:recurrent-argument} again $(M^{\p(M)n})_{i,k}>0$ for all $n\ge 2N+N'$. 
This means that the set
\[
	K := \{k\in\{1,\ldots,m\}\,\mid\, (M^{\p(M)n})_{i,k} (M^r)_{k,j}>0 \text{ for all } n \text{ large enough}\}
\]
is nonempty. 
Note that if $k\in\{1,\ldots,m\}\setminus K$, then $(M^{\p(M)n})_{i,k} (M^r)_{k,j}=0$ 
for all $n\ge  \lceil(m+1)/\p(M)\rceil$.
From Proposition~\ref{prop:power}, $M^{\p(M)}$ 
has the form~\eqref{upper-triangular-primitive} up to a permutation.
Then, for each $k\in K$, 
we know from Lemma~\ref{lemma:upper-triangular-primitive} that 
there exist $\lambda_k\in\multspec(M^{\p(M)})\cap \R_{\ge 1}$ and $d_k \in \N$ such that
$(M^{\p(M)n})_{i,k} =\Theta(n^{d_k}\lambda_k^n)$.
Define
\begin{align*}
	\lambda & :=\max \{\lambda_k\, \mid\,  k\in K \};\\
	d & :=\max\{d_k\,\mid\, k\in K \text{ and } \lambda_k=\lambda\}.
\end{align*}
Let $k_0\in K$ such that $\lambda_{k_0}=\lambda$ and $d_{k_0}=d$.
Then,   for all sufficiently large $n$,
\[
	(M^{\p(M)n})_{i,k_0} (M^r)_{k_0,j} \le (M^{\p(M)n+r})_{i,j}= \sum_{k\in K} (M^{\p(M)n})_{i,k} (M^r)_{k,j}.
\]
where the last equality follows from Lemma~\ref{lemma:recurrent-argument} again.
We have obtained that $(M^{\p(M)n+r})_{i,j}=\Theta(n^d\lambda^n)$, hence the result.
\end{proof}

\begin{definition}
\label{def:lambda(i,j,r)}
For every matrix $M\in\N^{m\times m}$, indices $i,j \in \{1,\dots,m\}$ and remainder $r\in\{0,\ldots,\p(M)-1\}$, 
we let $\lambda(i,j,r)$ and $d(i,j,r)$ 
denote the two quantities $\lambda$ and $d$ obtained in Proposition~\ref{prop:lambda-d-ijr}, 
if $(M^{\p(M)n+r})_{i,j}$ is not ultimately zero (as $n$ tends to infinity);  
and we set $\lambda(i,j,r)=0$ and $d(i,j,r)=0$, otherwise.
\end{definition}

The following example shows that we cannot hope for more than the previous statement 
in the sense that $\lambda$ and $d$ really depend on $i,j,r$.

\begin{example} Consider the graph depicted in Figure \ref{fig:ex} and its adjacency matrix $M$.
 \begin{figure}[htbp]
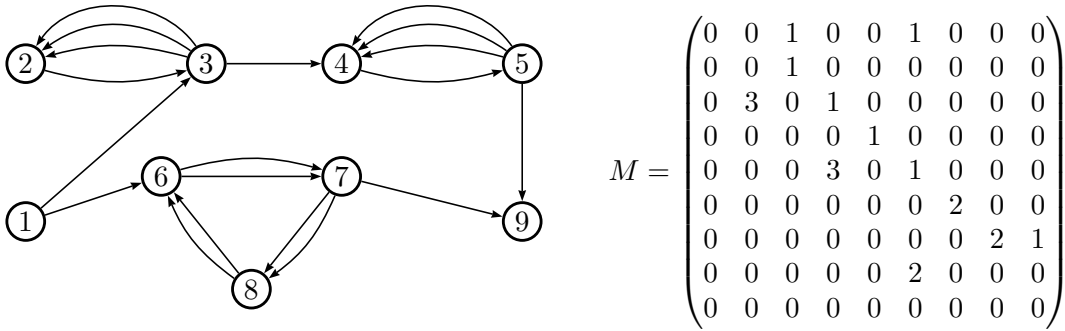

        \centering
\begin{minipage}{0.5\linewidth}
\centering
\VCDraw{
        \begin{VCPicture}{(1,-6.3)(12,1.8)}
\State[1]{(1,-3.5)}{1}
\State[2]{(1,0)}{2}
\State[3]{(5,0)}{3}
\State[4]{(8,0)}{4}
\State[5]{(12,0)}{5}
\State[9]{(12,-3.5)}{6}
\State[6]{(4,-2.5)}{7}
\State[7]{(8,-2.5)}{8}
\State[8]{(6,-5)}{9}
\EdgeR{1}{3}{}
\EdgeR{1}{7}{}
\ArcR{2}{3}{}
\ArcR{3}{2}{}
\VArcR{arcangle=-35}{3}{2}{}
\VArcR{arcangle=-55}{3}{2}{}
\EdgeL{3}{4}{}
\ArcR{4}{5}{}
\ArcR{5}{4}{}
\VArcR{arcangle=-35}{5}{4}{}
\VArcR{arcangle=-55}{5}{4}{}
\EdgeL{5}{6}{}
\EdgeL{7}{8}{}
\ArcL{7}{8}{}
\EdgeL{8}{9}{}
\ArcL{8}{9}{}
\EdgeL{9}{7}{}
\EdgeR{8}{6}{}
\ArcL{9}{7}{}
\end{VCPicture}
}
\end{minipage}
\begin{minipage}{.45\linewidth}
    $$
    M=\begin{pmatrix}
        0&0&1&0&0&1&0&0&0\\
        0&0&1&0&0&0&0&0&0\\
        0&3&0&1&0&0&0&0&0\\
        0&0&0&0&1&0&0&0&0\\
        0&0&0&3&0&1&0&0&0\\
        0&0&0&0&0&0&2&0&0\\
        0&0&0&0&0&0&0&2&1\\
        0&0&0&0&0&2&0&0&0\\
        0&0&0&0&0&0&0&0&0\\
    \end{pmatrix}$$
\end{minipage}
        \caption{A directed graph.}
        \label{fig:ex}
 \end{figure}
Note that for the square blocks corresponding to the strongly connected components 
$\{2,3\}, \{4,5\}, \{6,7,8\}$ of the graph, we have
\[
	\begin{pmatrix}
	    0&1\\
	    3&0\\
	\end{pmatrix}^2=
	\begin{pmatrix}
	    3&0\\
	    0&3\\
	\end{pmatrix},\quad 
	\begin{pmatrix}
	    0&2&0\\
	    0&0&2\\
	    2&0&0\\
	\end{pmatrix}^2=
	\begin{pmatrix}
	    0&0&4\\ 
	    4&0&0\\ 
	    0&4&0\\
	\end{pmatrix}\text{ and }
	\begin{pmatrix}
	    0&2&0\\
	    0&0&2\\
	    2&0&0\\
	\end{pmatrix}^3=
	\begin{pmatrix}
	    8&0&0\\
	    0&8&0\\ 
	    0&0&8\\
	\end{pmatrix}
\]
and thus $\p(M)=6$. 
The matrix $M^6$ is of the form \eqref{upper-block-triangular-primitive} 
with seven primitive diagonal blocks of size $1$ which are $(3^3)$ four times and $(8^2)$ 
three times, and also two blocks $(0)$ (corresponding to the vertices $1$ and $9$). 
In particular, the spectral radii of the non-trivial strongly connected components
are $3^{3/6}=\sqrt{3}$ and $8^{2/6}=2$. 
In Table \ref{tab:va}, we have represented the asymptotic behaviors 
of $(M^{\p(M)n+r})_{i,j}$ for all $r\in\{0,\ldots,5\}$ and some selected pairs $(i,j)$.
\begin{table}
\[
	\begin{array}{c|ccccccc}
	    r&(1,2) & (1,3)&(1,5)&(1,7)&(1,8)&(1,9)&(4,9)\\
	\hline
	    0& \sqrt{3}^n 	& 0 			& 0 			& 0 		& 2^n 	& 2^n 		& \sqrt{3}^n\\
	    1& 0 		  	& \sqrt{3}^n 	& n \sqrt{3}^n 	& 0 		& 0 		& 0 			& 0\\
	    2& \sqrt{3}^n 	& 0 			& 0 			& 2^n 	& 0		 & n \sqrt{3}^n 	& \sqrt{3}^n\\
	    3& 0 			& \sqrt{3}^n 	& n \sqrt{3}^n & 0 		& 2^n 	& 2^n 		& 0\\ 
	    4& \sqrt{3}^n 	& 0 			& 0 			& 0 		& 0 		& n \sqrt{3}^n 	& \sqrt{3}^n\\
	    5& 0 			& \sqrt{3}^n 	& n \sqrt{3}^n & 2^n 	& 0 		& 0 			& 0\\
	\end{array}
\]
\caption{Values of $(M^{\p(M)n+r})_{i,j}=\Theta(n^d\lambda^n)$ for some $(i,j)$ and $r\in\{0,\ldots,\p(M)-1\}$.}
    \label{tab:va}
\end{table}

Indeed, there are only walks of even (resp. odd) length from vertex $1$ to $2$ (resp. $3$). 
There are only walks of odd length from vertex $1$ to $5$ 
and the extra factor $n$ comes from the fact that those walks 
may visit the vertices $2,3,4,5$. For walks from $1$ to $9$, 
one has to take into account the walks of even length going 
through the vertices $2,3,4$ but also walks of length multiple of $3$ 
going through $6,7,8$. Since $2>\sqrt{3}$ 
the behavior for the number of walks of length multiple of $6$ is given by those going through $6,7,8$.
\end{example} 

\nopagebreak
\begin{prop}\label{prop:somme}
	Let $M\in\N^{m\times m}$.
	\begin{itemize}
	\item[(i)] For all $i\in\{1,\ldots,m\}$,  there exist $\lambda\in\multspec(M)$ and $d\in\N$ such that
			\[
				\sum_{j=1}^m(M^n)_{i,j}=\Theta(n^d\lambda^n).
			\]
	\item[(ii)] For all $j\in\{1,\ldots,m\}$, there exist $\lambda\in\multspec(M)$ and $d\in\N$ such that
			\[ 
				\sum_{i=1}^m(M^n)_{i,j}=\Theta(n^d\lambda^n).
			\]	
	\end{itemize}
\end{prop}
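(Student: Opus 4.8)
The plan is to prove \textbf{(i)} and deduce \textbf{(ii)} by transposition: since $(M^{T})^{n}=(M^{n})^{T}$, $\multspec(M^{T})=\multspec(M)$, and $\sum_{i}(M^{n})_{i,j}$ is the $j$-th row sum of $(M^{T})^{n}$, part \textbf{(ii)} for $M$ is exactly part \textbf{(i)} for $M^{T}$. So fix $i\in\{1,\dots,m\}$ and work in the digraph $G(M)$ of Remark~\ref{rem:digraph}, where $\sum_{j}(M^{n})_{i,j}$ is the weighted number of walks of length $n$ from vertex $i$. If $i$ reaches no cycle of $G(M)$, all walks from $i$ are bounded in length, the row sum vanishes for large $n$, and since the set $T$ of vertices not reaching a cycle is closed under out-edges and induces a nilpotent submatrix, $0\in\multspec(M)$, so the claim holds with $\lambda=d=0$. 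Otherwise let $\lambda$ be the largest $\rho(M_{C})$ among the strongly connected components $C$ of $G(M)$ reachable from $i$ (so $\lambda\ge 1$), and let $d+1$ be the largest number of such components of spectral radius $\lambda$ that lie on one directed path, starting at $i$'s component, in the condensation of $G(M)$. Topologically ordering the components makes $M$ block triangular with the $M_{C}$ on the diagonal, so $\lambda=\rho(M_{C})\in\multspec(M)$ by Theorem~\ref{the:pf}; it remains to show $\sum_{j}(M^{n})_{i,j}=\Theta(n^{d}\lambda^{n})$.

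The upper bound I would get along the lines of Lemma~\ref{lemma:upper-triangular-primitive}. First, for a non-trivial component $C$ with period $p_{C}$, Theorem~\ref{the:p-f} applied to the block-diagonal primitive matrix $M_{C}^{\,p_{C}}$ (whose blocks have spectral radius $\rho(M_{C})^{p_{C}}$) yields $\sum_{j\in C}(M_{C}^{\,t})_{k,j}=\Theta(\rho(M_{C})^{t})$ for every vertex $k$ of $C$; note that the oscillation in $t\bmod p_{C}$ present in a single entry $(M_{C}^{\,t})_{k,k}$ is erased by summing over $j$. Then, decomposing a walk of length $n$ from $i$ canonically according to the sequence of components it visits --- a sequence monotone in the topological order, since the condensation is acyclic --- the contribution of a fixed component-path $\pi=D_{0}\to\cdots\to D_{u}$ is $O\big(\sum_{n_{0}+\cdots+n_{u}=n-u}\prod_{a}\rho(M_{D_{a}})^{n_{a}}\big)$ (trivial components forcing the corresponding $n_{a}=0$ and contributing only bridge edges), which by the estimate~\eqref{eq:asympt} is $O\big(n^{q_{\pi}-1}(\max_{a}\rho(M_{D_{a}}))^{n}\big)$ with $q_{\pi}$ the number of indices attaining the maximum. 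Maximising over the finitely many $\pi$ gives $\sum_{j}(M^{n})_{i,j}=O(n^{d}\lambda^{n})$.

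The matching lower bound $\sum_{j}(M^{n})_{i,j}\ge c\,n^{d}\lambda^{n}$ for \emph{all} large $n$ is the delicate part, and the one I expect to be the main obstacle: in a single off-diagonal entry the growth rate genuinely oscillates with $n\bmod\p(M)$ (witness the example after Proposition~\ref{prop:lambda-d-ijr}), so one must show that summing over $j$ kills the oscillation uniformly in $n$. I would fix a chain $C_{1}^{*}\to\cdots\to C_{d+1}^{*}$ of reachable components with $\rho(M_{C_{s}^{*}})=\lambda$, linked by bridging sub-walks of fixed lengths, and count only the walks from $i$ that reach the entry vertex of $C_{1}^{*}$, loop inside $C_{1}^{*}$ for $m_{1}$ steps back to that vertex, bridge to $C_{2}^{*}$, and so on, finally running inside $C_{d+1}^{*}$ for $m_{d+1}$ steps and stopping \emph{anywhere} in $C_{d+1}^{*}$; acyclicity of the condensation makes this decomposition unique, so no walk is double-counted. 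Choosing $m_{1},\dots,m_{d}$ to be large multiples of the respective periods, each admissible tuple contributes at least $c\prod_{s}\lambda^{m_{s}}=c\,\lambda^{\,n-\ell}$ by the irreducible estimate above, with $\ell$ a fixed offset; and because $m_{d+1}$ is free to absorb whatever slack is needed, for every large $n$ there are $\Theta(n^{d})$ admissible tuples with $\sum_{s}m_{s}=n-\ell$. This gives $\sum_{j}(M^{n})_{i,j}\ge c\,n^{d}\lambda^{n}$ for all large $n$; combined with the upper bound this proves \textbf{(i)}, and \textbf{(ii)} follows by transposition. The routine parts I am skipping are that $T$ induces a nilpotent block, the verification that the exponents from the two bounds coincide, and the elementary count of admissible tuples.
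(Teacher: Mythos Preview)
Your argument is correct and takes a genuinely different route from the paper. The paper proves (ii) by first invoking Proposition~\ref{prop:lambda-d-ijr} to get, for each residue $r\in\{0,\ldots,\p(M)-1\}$, a pair $(\lambda(*,j,r),d(*,j,r))$ governing $\sum_i(M^{\p(M)n+r})_{i,j}$, and then shows algebraically (via the identity $M^{\p(M)n+r'}=M^{\p(M)+r'-r}M^{\p(M)(n-1)+r}$) that these pairs do not depend on $r$; the stated $\lambda$ is then the $\p(M)$th root. Your proof bypasses both Proposition~\ref{prop:lambda-d-ijr} and the residue-independence step entirely: you work directly with walks in $G(M)$, read off $\lambda$ and $d$ from the condensation, and for the lower bound exploit the key observation that because the walk may terminate \emph{anywhere} in the last dominant component $C_{d+1}^{*}$, the row-sum estimate $\sum_{j\in C}(M_C^{t})_{k,j}=\Theta(\rho(M_C)^{t})$ holds for \emph{all} $t$, so $m_{d+1}$ absorbs the residue and no periodicity argument is needed. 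This buys a more self-contained proof with an explicit combinatorial description of $(\lambda,d)$, at the cost of redoing some of the path-decomposition work already packaged in Lemma~\ref{lemma:upper-triangular-primitive}. Two small points worth tightening: for $s\le d$ the lower bound on the closed-walk count $(M_{C_s^{*}}^{m_s})_{v_s,v_s}$ is not literally ``the irreducible estimate above'' (which was a row sum) but rather Theorem~\ref{the:p-f}(iii) applied to the primitive block of $M_{C_s^{*}}^{p_{C_s^{*}}}$ containing $v_s$; and the uniqueness of the decomposition should be argued via the exit times from the $C_s^{*}$ (which are well defined since one never re-enters an SCC), not merely via acyclicity of the condensation.
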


\begin{proof}
Let us prove (ii) (the item (i) is symmetric).
For each $j \in \{1,\ldots,m\}$ and each $r\in\{0,\ldots,\p(M)-1\}$, we define 
\begin{align*}
	\lambda(*,j,r) & :=	\max\{\lambda(i,j,r)\,\mid\, 1\le i\le m\};\\
	d(*,j,r) & :=\max\{d(i,j,r)\,\mid\, 1\le i\le m \text{ and } \lambda(i,j,r)=\lambda(*,j,r)\}.
\end{align*}
We know from Proposition~\ref{prop:lambda-d-ijr} that
\[
	\sum_{i=1}^m(M^{\p(M)n+r})_{i,j}=\Theta\bigl(n^{d(*,j,r)}\lambda(*,j,r)^n\bigr).
\]
Hence, to prove the lemma, it suffices to show that the quantities $\lambda(*,j,r)$ and $d(*,j,r)$ only depend on $j$, 
that is, for all $r,r'\in\{0,\ldots,\p(M)-1\}$, one has $\lambda(*,j,r)=\lambda(*,j,r')$ and $d(*,j,r)=d(*,j,r')$.

Let $j\in\{1,\ldots,m\}$ and $r,r'\in\{0,\ldots,\p(M)-1\}$ such that $r\neq r'$.
If $\lambda(*,j,r)=0$ then $\lambda(*,j,r')=0$. 
This is because, for all $i'\in\{1,\ldots,m\}$ and all large enough $n$, one has
\[
	(M^{\p(M)n+r'})_{i',j}=\sum_{i=1}^m (M^{\p(M)+r'-r})_{i',i} \underbrace{(M^{\p(M)(n-1)+r})_{i,j}}_{=\,  0 \text{ for each $i$}} =0.
\]
Moreover, in this case, we have $d(*,j,r)=d(*,j,r')=0$ by definition. 

Assume now that $\lambda(*,j,r)>0$. 
Let $i\in\{1,\ldots,m\}$ such that $\lambda(*,j,r)=\lambda(i,j,r)$ and $d(*,j,r)=d(i,j,r)$.
Then let $i'\in\{1,\ldots,m\}$ such that $\lambda(i',j,r')=\lambda(i,j,r)$ and $d(i',j,r')=d(i,j,r)$.
Such an index $i'$ exists because
\[
	(M^{\p(M)n+r})_{i,j}=\sum_{k=1}^m (M^{\p(M)+r-r'})_{i,k} (M^{\p(M)(n-1)+r'})_{k,j}.
\]
This implies $\lambda(*,j,r)\le\lambda(*,j,r')$. 
By symmetry  $\lambda(*,j,r)=\lambda(*,j,r')$. 
Then, since $d(*,j,r)=d(i',j,r')$, we obtain $d(*,j,r)\le d(*,j,r')$. 
Again by symmetry $d(*,j,r)= d(*,j,r')$.

To end the proof of (ii), we take $\lambda=\lambda(*,j,r)^{1/\p(M)}$ 
and $d=d(*,j,r)$ for any $r\in\{0,\ldots,\p(M)-1\}$. 
Indeed, by Euclidean division, every $n$ can be written $\p(M)\lfloor n/\p(M)\rfloor+r$ with $r<\p(M)$. 
This explains the apparition of the $\p(M)$th root.
\end{proof}

\begin{remark} \label{rem:interpretation-lambda} 
It is convenient for what follows to give a description of the quantity $\lambda$ 
that appears in the previous result.
Consider the case (ii) and fix $j\in\{1,\ldots,m\}$.
Then, what can be extracted from the proof of Proposition \ref{prop:somme} is that $\lambda$ is the greatest 
$\lambda(i,j,0)^{1/\p(M)}$ for $1 \leq i \leq m$ 
(since we have proved the independence of the $\lambda(i,j,r)$ with respect to $r\in\{0,\ldots,\p(M)\}$).
Assume now that $M^{\p(M)}$ is of the form~\eqref{upper-triangular-primitive} 
(this is always the case up to a permutation). 
In particular, from Definition~\ref{def:lambda(i,j,r)} and Lemma~\ref{lemma:upper-triangular-primitive},
we deduce that $\lambda(i,j,0)$ is the greatest spectral radius of the diagonal blocks $P_\ell$ 
for which there exist $k \in \{1,\dots,m\}$ and $n_1,n_2\in\N$ such that 
$(M^{\p(M)n_1})_{i,k}>0$,  $(M^{\p(M)n_2})_{k,j}>0$ and $(M^{\p(M)})_{k,k}$
is an entry of $P_\ell$.
Then $\lambda$ is 
 the $\p(M)$th root of the greatest spectral radius of the diagonal blocks $P_\ell$ 
for which there exists $k \in \{1,\dots,m\}$ such that $(M^{\p(M)})_{k,k}$ is an entry of $P_\ell$ and 
$(M^{\p(M)n})_{k,j}>0$ for some $n\in\N$.
\end{remark}

\subsection{Dilatation of matrices}
Roughly speaking, when dilating a matrix $M$, each entry $M_{i,j}$ is replaced 
in a convenient way by a matrix of size $k_i\times k_j$ whose lines all sum up to $M_{i,j}$.

\begin{definition}\label{def:dil}
	Let $M$ be a real square matrix of size $m$. 
	A real square matrix $D$ of size $n \geq m$ is called a {\em dilated matrix of $M$} if
	there exist positive integers $k_1 , \dots, k_m$ such that 
	\begin{itemize}
		\item[(i)] $\sum_{i=1}^m k_i = n$;
		\item[(ii)] rows and columns are both indexed by pairs $(i,k)$ for $1 \leq i \leq m$ and $1 \leq k \leq k_i$;
		\item[(iii)] $D$ satisfies the following property:
	\begin{equation} \label{eq: property row sum}
		\forall i,j \in \{1,\dots,m\}, \, \forall k \in \{1,\dots,k_i\}, \qquad \sum_{\ell = 1}^{k_j} D_{(i,k),(j,\ell)} = M_{i,j}.
	\end{equation}
	\end{itemize}
	The vector $(k_1,k_2,\dots,k_m)$ is called the {\em dilatation vector of $D$}.
	We let $\dil(M)$ denote the set of dilated matrices of $M$.
\end{definition}

In other words, given a square matrix $M$ of size $m$, a dilated matrix with dilatation vector $(k_1,\dots,k_m)$ 
of $M$ is a block matrix 
\[
	D = 
	\left(
	\begin{array}{ccc}
		B_{1,1} & \cdots & B_{1,m}	\\
		\vdots	& \ddots	 & \vdots	\\
		B_{m,1}	& \cdots & B_{m,m} 
	\end{array}	 
	\right)
\]
where each block $B_{i,j}$ has $k_i$ rows and $k_j$ columns and such that for all $k \in \{1,\dots,k_i\}$, one has 
\[
	\sum_{\ell = 1}^{k_j} (B_{i,j})_{k,\ell} = M_{i,j}.
\]

Definition~\ref{def:dil} can be adapted to column vectors instead of matrices. 
The idea is to repeat several times a given entry to be compatible and coherent
with the multiplication of a matrix with a column vector.

\begin{definition}\label{def:dilv}
	Let $x \in \R^m$ be a column vector. 
	A vector $d \in \R^n$ with $n \geq m$ is a {\em dilated vector of $x$} 
	if there exist positive integers $k_1, \dots, k_m$ such that 
	\begin{enumerate}
	\item $\sum_{i=1}^m k_i=n$;
	\item entries of $d$ are indexed by pairs $(i,k)$ for $1 \le i \le m$ and $1\le k\le k_i$;
	\item for all $i \in \{1,\dots,m\}$ and all $k \in \{1,\dots,k_i\}$, we have $d_{(i,k)} = x_i$.
	\end{enumerate}
\end{definition}

\begin{example}
Consider the following matrix $M$ and vector $x$
\[
	M=\left(
		\begin{array}{ccc}
		1 	&	1	&	1	\\
		2	&	1	&	1	\\
		1	&	1	&	0	
		\end{array}
	\right)
	\  \text{and} \ 
	x=\left(
		\begin{array}{c}
		1	\\ 0 	\\ 	2
		\end{array}
	\right).
\]
The matrix $D$ and the vector $d$ below are respectively, a dilated matrix of $M$ 
and a dilated vector of $x$ with dilatation vector $(1,2,2)$.
\[
	D=\left(
		\begin{array}{c|cc|cc}
		1 	&	1	&	0	&	0	&	1	\\
		\hline
		2	&	0	&	1	&	1	&	0	\\
		2	&	1	&	0	&	1/2	&	1/2	\\
		\hline
		1	&	\sqrt{2}	&	1-\sqrt{2}	&	1	&	-1	\\	
		1	&	0	&	1	&	0	&	0	
		\end{array}
	\right)
	\  \text{and} \ 
	d=\left(
		\begin{array}{c}
		1	\\ \hline 0 \\0		\\	\hline 2	\\ 	2
		\end{array}
	\right).
\]
Observe that the product $Dd$ is a dilated vector of the product $Mx$:
\[
	MX = (3 \ 4 \ 1)^T 
	\quad \text{and} \quad
	Dd = (3 \mid 4 \ 4 \mid 1 \ 1 )^T	
\] 
\end{example}

\begin{lemma} \label{lemma: spectrum dilated}
	Let $M$ be a real square matrix of size $m$. 
	Let $D$ be a dilated matrix of $M$.
	Each eigenvalue of $M$ is an eigenvalue of $D$.
\end{lemma}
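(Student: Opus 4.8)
The plan is to lift an eigenvector of $M$ to an eigenvector of $D$ associated with the same eigenvalue, using the dilation of vectors from Definition~\ref{def:dilv}. Since $D$ is obtained from $M$ by replacing each entry $M_{i,j}$ by a block whose rows all sum to $M_{i,j}$, a vector that is constant on each group of indices $\{(j,1),\dots,(j,k_j)\}$ should behave, under multiplication by $D$, exactly as the corresponding vector over the original index set behaves under $M$.

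Concretely, fix an eigenvalue $\lambda\in\mathbb{C}$ of $M$ and a non-zero $x\in\mathbb{C}^m$ with $Mx=\lambda x$. Let $(k_1,\dots,k_m)$ be the dilatation vector of $D$ and let $d\in\mathbb{C}^n$ be indexed by the pairs $(i,k)$, $1\le i\le m$, $1\le k\le k_i$, and defined by $d_{(i,k)}=x_i$; this is the obvious analogue over $\mathbb{C}$ of the dilated vector of $x$ of Definition~\ref{def:dilv} (the construction there makes sense verbatim over $\mathbb{C}$, and this is the only place a word of care is needed, since a real matrix may have complex eigenpairs). Then I would simply compute, for every index $(i,k)$,
\[
(Dd)_{(i,k)}=\sum_{j=1}^m\sum_{\ell=1}^{k_j}D_{(i,k),(j,\ell)}\,d_{(j,\ell)}
=\sum_{j=1}^m\Bigl(\sum_{\ell=1}^{k_j}D_{(i,k),(j,\ell)}\Bigr)x_j
=\sum_{j=1}^m M_{i,j}\,x_j=(Mx)_i=\lambda x_i=\lambda\,d_{(i,k)},
\]
where the crucial third equality is precisely the defining property~\eqref{eq: property row sum} of a dilated matrix. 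Hence $Dd=\lambda d$.

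It remains to observe that $d\neq 0$: since $x\neq 0$, there is an index $i$ with $x_i\neq 0$, and then $d_{(i,1)}=x_i\neq 0$. Thus $\lambda$ is an eigenvalue of $D$, proving the claim. There is essentially no obstacle in this argument; the only subtlety, as noted, is allowing the eigenvector to be complex. Note also that nothing is asserted about multiplicities (and indeed $D$ will in general have eigenvalues, such as~$0$, that are not eigenvalues of $M$), so exhibiting a single eigenvector suffices.
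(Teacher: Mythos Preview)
Your proof is correct and follows essentially the same route as the paper: lift an eigenvector $x$ of $M$ to its dilated vector and verify directly, using the row-sum property~\eqref{eq: property row sum}, that this dilated vector is an eigenvector of $D$ for the same eigenvalue. Your explicit remark that one may need complex eigenvectors (since a real matrix can have non-real eigenvalues) is a nice clarification that the paper leaves implicit.
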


\begin{proof}  
Assume that $D$ is a dilated matrix of $M$ with dilatation vector $(k_1,\dots,k_m)$. 
Let $\lambda$ be an eigenvalue of $M$ and let $x$ be an eigenvector of $M$ such that $M x = \lambda x$.
Let $y$ be a dilated vector of $x$ with dilatation vector $(k_1,\dots,k_m)$. 
The vector $y$ is non-zero and for all $i$ in $\{1,\dots,m\}$ and all $k \in \{1,\dots,k_i\}$, we have	
\[
	(D y)_{(i,k)} 
	=
	\sum_{j=1}^m \sum_{\ell=1}^{k_j} D_{(i,k),(j,\ell)} y_{(j,\ell)}
	=
	\sum_{j=1}^m \left(\sum_{\ell=1}^{k_j} D_{(i,k),(j,\ell)}\right) x_j
	=
	\sum_{j=1}^m M_{i,j} x_j =
	\lambda\, x_i =
	\lambda\,  y_{(i,k)}.
\]
Hence, $\lambda$ is an eigenvalue of $D$.
\end{proof}

\begin{prop}\label{pro:dil}
	Let $M$ be a non-negative square matrix. For any non-negative matrix $D$ in $\dil(M)$,  
	$M$ and $D$ have the same spectral radius.
\end{prop}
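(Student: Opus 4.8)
The plan is to prove the two inequalities $\rho(D) \le \rho(M)$ and $\rho(M) \le \rho(D)$ separately. The second inequality is immediate from Lemma~\ref{lemma: spectrum dilated}: since every eigenvalue of $M$ is an eigenvalue of $D$, in particular $\rho(M) \in \multspec(M) \subseteq \multspec(D)$ (using that $\rho(M)$ is an eigenvalue by Theorem~\ref{the:pf}), so $\rho(M) = |\rho(M)| \le \rho(D)$. The real content is the reverse inequality $\rho(D) \le \rho(M)$, and here we exploit that both matrices are \emph{non-negative}, so that Theorem~\ref{the:pf} applies to $D$ as well: $\rho(D)$ is an actual eigenvalue of $D$, and moreover we can try to control the entries of powers of $D$ by those of powers of $M$.

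The key combinatorial observation is the defining property~\eqref{eq: property row sum}: for each fixed row-index $(i,k)$ of $D$ and each block-column $j$, the entries $D_{(i,k),(j,\ell)}$ over $\ell \in \{1,\dots,k_j\}$ sum to $M_{i,j}$. The plan is to show by induction on $n$ that this "row-sum collapses to $M$" property is preserved under taking powers, i.e.
\[
	\forall i,j, \ \forall k \in \{1,\dots,k_i\}, \qquad \sum_{\ell=1}^{k_j} (D^n)_{(i,k),(j,\ell)} = (M^n)_{i,j}.
\]
The base case $n=1$ is~\eqref{eq: property row sum}. For the inductive step one writes $(D^{n+1})_{(i,k),(j,\ell)} = \sum_{j'} \sum_{k'} (D^n)_{(i,k),(j',k')} D_{(j',k'),(j,\ell)}$, sums over $\ell$, swaps the order of summation to first collapse $\sum_\ell D_{(j',k'),(j,\ell)} = M_{j',j}$ via~\eqref{eq: property row sum}, and then collapses $\sum_{k'} (D^n)_{(i,k),(j',k')} = (M^n)_{i,j'}$ by the induction hypothesis; one is left with $\sum_{j'} (M^n)_{i,j'} M_{j',j} = (M^{n+1})_{i,j}$. (A symmetric argument would work with column sums if one preferred, but the row version suffices.)

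Granting this, the conclusion follows by a standard growth-rate comparison. Since $D$ is non-negative, every individual entry satisfies $0 \le (D^n)_{(i,k),(j,\ell)} \le \sum_{\ell'=1}^{k_j} (D^n)_{(i,k),(j,\ell')} = (M^n)_{i,j}$. Hence the maximal entry of $D^n$ is bounded above by the maximal entry of $M^n$. For a non-negative square matrix $A$ of size $N$, a routine estimate (using e.g. $\|A^n\|_\infty \le N \max_{i,j}(A^n)_{i,j}$ together with $\rho(A) = \lim_n \|A^n\|_\infty^{1/n}$, or simply comparing $\max_{i,j}(A^n)_{i,j}^{1/n} \to \rho(A)$ which follows from Proposition~\ref{prop:power} or Lemma~\ref{lemma:upper-triangular-primitive}) shows that $\max_{i,j}(A^n)_{i,j} = \Theta^*(\rho(A)^n)$ up to polynomial factors; in particular $\limsup_n \bigl(\max_{i,j}(A^n)_{i,j}\bigr)^{1/n} = \rho(A)$. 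Applying this to both $D$ and $M$ and using the entrywise bound gives $\rho(D) \le \rho(M)$, completing the proof.

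I expect the main obstacle to be purely a matter of packaging the last step cleanly: the entrywise domination $(D^n)_{(i,k),(j,\ell)} \le (M^n)_{i,j}$ is the crux and is easy once the induction is set up, but one must be slightly careful to invoke only results already available in the paper. The cleanest route is probably to avoid spectral-radius limit formulas altogether and instead note that if $\rho(D) > \rho(M)$ then, since $\rho(D)$ is an eigenvalue of $D$ (Theorem~\ref{the:pf}) of a non-negative matrix, some entry of $D^n$ grows at least like $\rho(D)^n$ (this can be read off from Lemma~\ref{lemma:upper-triangular-primitive} applied to a suitable power of $D$, where $\rho(D)$ appears as the spectral radius of a primitive diagonal block), whereas by the induction every entry of $D^n$ is at most $(M^n)_{i,j}$, and by Proposition~\ref{prop:somme} every row sum of $M^n$ — hence every entry — is $O(n^d \rho(M)^n)$; this contradicts $\rho(D) > \rho(M)$.
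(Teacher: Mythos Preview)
Your proof is correct and takes a genuinely different route from the paper's. The paper establishes $\rho(D)\le\rho(M)$ via the Collatz--Wielandt formula: assuming first that both $M$ and $D$ are primitive, for a non-negative vector $y$ it sets $x_i=\max_{k} y_{(i,k)}$ and checks that $\min_{(i,k)}(Dy)_{(i,k)}/y_{(i,k)}\le\min_i(Mx)_i/x_i$; the non-primitive case is then handled by perturbing $M$ and $D$ with suitable positive matrices and passing to the limit $\rho(M_s)\to\rho(M)$, $\rho(D_s)\to\rho(D)$. Your argument instead shows inductively that $D^n$ is again a dilated matrix of $M^n$ with the same dilatation vector --- a clean structural observation in its own right --- so that every entry of $D^n$ is dominated by the corresponding entry of $M^n$, and then concludes via the Gelfand formula $\rho(A)=\lim_n\|A^n\|^{1/n}$. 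Your route is more direct, avoids any primitivity hypothesis, and sidesteps the perturbation step entirely; the paper's approach, on the other hand, stays within the Perron--Frobenius toolkit already set up in the section. One small caution: your alternative ``route~2'' appeals to Lemma~\ref{lemma:upper-triangular-primitive} and Proposition~\ref{prop:somme}, but in the paper those are stated only for matrices in $\N^{m\times m}$, not for arbitrary non-negative real matrices; the Gelfand-based route~1 is the clean way to finish and needs nothing beyond standard linear algebra.
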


\begin{proof} 
We follow the lines of the proof of \cite[Proposition 7]{Nicolay&Rigo:07}.
Due to Lemma~\ref{lemma: spectrum dilated} and Theorem~\ref{the:pf}, 
we have $\rho(D) \geq \rho(M)$. 
Let us prove that we also have $\rho(D) \leq \rho(M)$.

The Collatz-Wielandt formula (see, for instance, \cite[Chap.~8]{Meyer:2000}) states that, 
for any primitive (or even, for any irreducible) matrix $N$ of size $m$, 
\[
	\rho(N) = \max_{\substack{y \in \mathbb{R}^m \\ y \geq 0}} 
			\min_{\substack{1 \leq i \leq m\\ y_i \neq 0}} \frac{(N y)_i}{y_i}.
\]

Let $m$ (resp., $n$) be the size of $M$ (resp., $D$).  
Let us first suppose that $M$ and $D$ are primitive.
Let us prove that for all non-negative vectors $y \in \mathbb{R}^n$ 
there is a non-negative vector $x \in \mathbb{R}^m$ such that
\[
	\min_{\substack{1 \leq i \leq n \\ y_i \neq 0}} \frac{(Dy)_i}{y_i}
	\leq 
	\min_{\substack{1 \leq i \leq m \\ x_i \neq 0}} \frac{(Mx)_i}{x_i}.
\] 
Let $y$ be a non-negative vector in $\mathbb{R}^n$ and let $(k_1,\dots,k_m)$ be the dilatation vector of $D$. 
With the convention taken in Definition~\ref{def:dilv}, 
we index the components of $y$ by the ordered pairs $(i,k)$ for $1 \leq i \leq m$ and $1 \leq k \leq k_i$. 
Let us define the non-negative vector $x \in \mathbb{R}^m$ by 
\[
	x_i = \max_{1 \leq k \leq k_i} y_{(i,k)}.
\]
We have 
\begin{eqnarray*}
	\min_{\substack{1 \leq i \leq m \\ 1 \leq k \leq k_i \\ y_{(i,k)} \neq 0}} \frac{(D y)_{(i,k)}}{y_{(i,k)}} 
	&=&
	\min_{\substack{1 \leq i \leq m \\ 1 \leq k \leq k_i \\ y_{(i,k)} \neq 0}} \frac{1}{y_{(i,k)}} \sum_{j=1}^m \sum_{\ell=1}^{k_j}	
		D_{(i,k),(j,\ell)} y_{(j,\ell)}		\\
	&\leq &  \min_{\substack{1 \leq i \leq m \\ 1 \leq k \leq k_i \\ y_{(i,k)} \neq 0}} \frac{1}{y_{(i,k)}} \sum_{j=1}^m 
		\left( \sum_{\ell=1}^{k_j} 	D_{(i,k),(j,\ell)} \right) x_j		 \\
	&\leq &	\min_{\substack{1 \leq i \leq m \\ 1 \leq k \leq k_i \\ y_{(i,k)} \neq 0}} \frac{1}{y_{(i,k)}} \sum_{j=1}^m M_{i,j} x_j\\
	&=&\min_{\substack{1 \leq i \leq m \\ x_i \neq 0}} \frac{1}{x_i}  \sum_{j=1}^m M_{i,j}  x_j.
\end{eqnarray*}
This completes the case for primitive matrices.

Now suppose that $M$ or $D$ is  not primitive. Let $J$ be the $n\times n$ matrix 
whose entries are all equal to $1$. Let $C$ be the $m\times m$ matrix defined by $C_{i,j}=k_j$ for all $i,j$. 
We can consider sequences of  matrices $(M_s)_{s\ge 1}$ and $(D_s)_{s\ge 1}$ 
where $M_s = M+\frac{1}{s}C$ (resp., $D_s = D +\frac{1}{s}J$). 
Note that $M_s$ and $D_s$ are positive matrices, hence primitive. 
Moreover, $J$ is a dilated matrix of $C$ with dilatation vector $(k_1,\ldots,k_m)$. 
Hence the same holds for $D_s$ and $M_s$. We can therefore apply the same reasoning 
as in the first part of the proof and obtain $\rho(D_s) \leq \rho(M_s)$ for all $s\ge 1$. 
Since $\lim_{s \to +\infty} \rho(M_s) = \rho(M)$ and $\lim_{s \to +\infty} \rho(D_s) = \rho(D)$, 
we conclude that $\rho(D) \leq \rho(M)$. 
\end{proof}


\section{Growth orders of morphisms used to generate morphic words}\label{sec:3}

In the first part of this section, we recall classical definitions on infinite words that can be obtained as the image under a morphism $g$ 
of the infinite word generated by  iteratively applying another prolongable morphism $f$ on an initial letter $a$.  
It is well known that such a word $g(f^\omega(a))$ can also be obtained with a coding $\tau$ 
and a non-erasing morphism $\sigma$, i.e., $g(f^\omega(a))=\tau(\sigma^\omega(b))$. 
We discuss this result in the second part of this section, and relate precisely the growth rates of $f$ and $\sigma$. 

\subsection{Basic definitions}\label{sec:basic}

Let $A$ be an alphabet. The set of finite words over $A$ is denoted by $A^*$. 
Endowed with the concatenation product, this set is a monoid whose neutral element is the empty word $\varepsilon$. 
We set $A^+=A^*\setminus\{\varepsilon\}$. 
The length of a word $w\in A^*$ is denoted by $|w|$ and the number of occurrences of the letter $a$ in $w$ 
is denoted by $|w|_a$. We have $|\varepsilon|=0$. 
A morphism $f\colon A^*\to B^*$ is a {\em coding} if, for all $a\in A$, $|f(a)|=1$. 
It is said to be {\em non-erasing} if, for all $a\in A$, $|f(a)|\ge 1$. 
Moreover, morphisms defined over $A^*$ can naturally be extended over $A^\mathbb{N}$. 
For more, see \cite{cant:2010,rigo1}.

\begin{definition}
	Let $A$ be an alphabet and $f\colon A^* \to A^*$ be a morphism. 
	We call a letter $a \in A$ {\em mortal (w.r.t.~$f$)} if there is a positive integer $n$ such that $f^n(a)=\varepsilon$. 
	A non-mortal letter is called {\em immortal (w.r.t.~$f$)}. 
	We let $A_{\mathcal{M},f}$ (or simply $A_\mathcal{M}$) denote the set of mortal letters 
	and $A_{\mathcal{I},f}$ (or simply $A_\mathcal{I}$) the set of immortal letters.
\end{definition}

\begin{definition}
	A subset $B$ of an alphabet  $A$ is said to be a {\em sub-alphabet} of $A$.
	In this case, we let $\kappa_{A,B}\colon A^* \to (A\setminus B)^*$ 
	denote the morphism defined by $\kappa_{A,B}(a)=\varepsilon$ if $a \in B$ 
	and $\kappa_{\mathcal{M},B}(a)=a$ otherwise.
\end{definition}

\begin{definition}
	Let $f \colon A^* \to A^*$ be a morphism. 
	The {\em incidence matrix} of $f$ is the matrix $\mathsf{Mat}_f\in\N^{A\times A}$ defined, for all $a,b\in A$, by
	\[
		{(\mathsf{Mat}_f)}_{a,b} = |f(b)|_a.
	\]
	For every sub-alphabet $B$ of $A$, we let 
	\[
		(\mathsf{Mat}_f)_B
	\] 
	denote the sub-matrix of $\mathsf{Mat}_f$ obtained from $\mathsf{Mat}_f$ 
	by selecting rows and columns corresponding to letters in $B$. 
	The eigenvalues and the spectrum of $\mathsf{Mat}_f$ are called respectively the {\em eigenvalues} 
	and the {\em spectrum} of $f$ which is denoted by $\multspec(f)$. Since $\mathsf{Mat}_f$ is non-negative, thanks to Theorem~\ref{the:pf} 
we can also define the {\em Perron eigenvalue} of $f$, which is $\rho(\mathsf{Mat}_f)$.
\end{definition}

\begin{definition}\label{def:restr}
	Let $f \colon A^* \to A^*$ be a morphism and let $B \subseteq A$ be a sub-alphabet.
	If $f(B)\subseteq B^*$, we say that the restriction 
	$f_B := \restriction{f}{B^*}\colon B^* \to B^*$ is a {\em sub-morphism} of $f$.
\end{definition}

Observe that for all $f$, $f_{A_\mathcal{M}}$ is a sub-morphism of $f$. 
Also, if $f_B$ is a sub-morphism of $f$, then $(\mathsf{Mat}_{f_B})=(\mathsf{Mat}_f)_B$. 

\begin{remark}
For any morphism $f \colon A^* \to A^*$ and for all $n \in \N$, 
we have $\mathsf{Mat}_{f^n} = \mathsf{Mat}_f^n$. 
Let $A=\{a_1,\ldots,a_n\}$, we denote by $\Psi(w)$ the column vector 
$(|w|_{a_1},\ldots,|w|_{a_n})^T$ for every finite word $w\in A^*$. 
The incidence matrix of a morphism $f$ satisfies
\[
	\mathsf{Mat}_f\Psi(w)=\Psi(f(w)).
\]
\end{remark}

\begin{remark}\label{rem:submorphism-matrix}
	Let $f \colon A^* \to A^*$ be a morphism such that $\mathsf{Mat}_f$ is of the form~\eqref{upper-triangular-primitive} 
	(or, equivalently, a lower block triangular matrix with primitive or $(0)$ blocks on the diagonal), 
	and let $B\subseteq A$ be a sub-alphabet such that $f(B)\subseteq B^*$, i.e., $f_B$ is a sub-morphism. Then $\mathsf{Mat}_{f_B}$ is  not any sub-matrix of $\mathsf{Mat}_f$. It is a block sub-matrix of $\mathsf{Mat}_f$ composed of entire blocks $B_{k,\ell}$ from the original block decomposition~\eqref{upper-triangular-primitive} 
where we set $B_{k,k}=P_k$ and $B_{k,\ell}=0$ if $k>\ell$
	(mutatis mutandis, if we consider the lower block triangular equivalent form).
	This is because, for all diagonal blocks $P_\ell$, we have 
	\[
		\{a\in A\,\mid\, (\mathsf{Mat}_f)_{a,a} \text{ belongs to } P_\ell\}\cap B \neq \emptyset \implies 
		\{a\in A\,\mid\, (\mathsf{Mat}_f)_{a,a} \text{ belongs to } P_\ell\}\subseteq B.
	\]
	Indeed, suppose the converse. So there is a block $P_\ell$ and letters $a\in A\setminus B$ and $b\in B$ such that 
	$(\mathsf{Mat}_f)_{a,a}$ and $(\mathsf{Mat}_f)_{b,b}$ belong to $P_\ell$.
	In this case, $P_\ell$ is not $(0)$, hence it is primitive.
	But then there exists a positive integer $n$ such that $(\mathsf{Mat}_f^n)_{a,b}= |f^n(b)|_a>0$, 
	which contradicts the hypothesis that $f(B)\subseteq B^*$.
	
	In particular, $\mathsf{Mat}_{f_B}$ is of the same block triangular form as  $\mathsf{Mat}_f$ 
	where the square blocks on the diagonal are some of the  primitive or $(0)$ blocks on the diagonal of $\mathsf{Mat}_f$.
\end{remark}

The next result is a reformulation of Proposition~\ref{prop:somme} (ii) in terms of morphisms (see also~\cite{Cassaigne-Mauduit-Nicolas:arxiv} or~\cite[Chap. 4]{cant:2010}). 

\begin{prop}
	Let $f\colon A^* \to A^*$ be a morphism.
	For all $a \in A$, there exist $d\in \N$ and $\lambda \in \multspec(f)$ 
	such that $|f^n(a)|= \Theta(n^{d}\, \lambda^n)$.
\end{prop}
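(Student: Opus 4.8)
The plan is to deduce this statement directly from Proposition~\ref{prop:somme}~(ii) applied to the incidence matrix $M=\mathsf{Mat}_f$. The key observation is the identity $|f^n(a)| = \sum_{b\in A} |f^n(a)|_b$, together with the fact recalled in the excerpt that $(\mathsf{Mat}_{f^n})_{b,a} = |f^n(a)|_b$ and $\mathsf{Mat}_{f^n} = \mathsf{Mat}_f^{\,n}$. So, identifying $A$ with $\{1,\dots,m\}$ and letting $a$ correspond to an index $j$, we have
\[
    |f^n(a)| = \sum_{i=1}^m (\mathsf{Mat}_f^{\,n})_{i,j}.
\]
Proposition~\ref{prop:somme}~(ii), with $M = \mathsf{Mat}_f$, then gives $\lambda \in \multspec(\mathsf{Mat}_f)$ and $d \in \N$ with $\sum_{i=1}^m (\mathsf{Mat}_f^{\,n})_{i,j} = \Theta(n^d \lambda^n)$, and since $\multspec(\mathsf{Mat}_f) = \multspec(f)$ by definition, we are done.

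First I would make sure the indexing conventions match: the excerpt defines $(\mathsf{Mat}_f)_{a,b} = |f(b)|_a$, so the column index is the argument of $f$ and the row index is the letter being counted; hence summing over the \emph{row} index $i$ (which is item (ii) of Proposition~\ref{prop:somme}, not item (i)) yields the total length $|f^n(a)|$ where $a$ is the fixed column index. This is a small but essential check — swapping (i) and (ii) would compute $\sum_b |f^n(b)|_a$ instead, which is not the length of any single word. Then I would simply invoke Proposition~\ref{prop:somme}~(ii) as a black box.

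There is essentially no obstacle here: the statement is explicitly advertised in the excerpt as ``a reformulation of Proposition~\ref{prop:somme}~(ii) in terms of morphisms.'' The only thing requiring a word of care is the degenerate possibility that $a$ is mortal, in which case $|f^n(a)| = 0$ for all large $n$; this is precisely the ``ultimately zero'' alternative built into Proposition~\ref{prop:lambda-d-ijr} and hence into Proposition~\ref{prop:somme}, and it is consistent with the $\Theta(n^d\lambda^n)$ formulation only if one allows $\lambda = 0$ (the convention $\lambda(i,j,r)=0$, $d(i,j,r)=0$ from Definition~\ref{def:lambda(i,j,r)}). If the intended reading of the proposition excludes mortal letters (so that $|f^n(a)|$ genuinely grows), I would add the remark that for immortal $a$ the sum is eventually positive, so the non-vanishing branch of Proposition~\ref{prop:somme} applies and yields $\lambda \ge 1$. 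Thus the proof is a single paragraph.

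\begin{proof}
Identify $A$ with $\{1,\dots,m\}$ and let $a$ correspond to the index $j$. For every $n$, using $(\mathsf{Mat}_f^{\,n})_{i,j} = (\mathsf{Mat}_{f^n})_{i,j} = |f^n(a)|_i$, we get
\[
    |f^n(a)| = \sum_{i=1}^m |f^n(a)|_i = \sum_{i=1}^m (\mathsf{Mat}_f^{\,n})_{i,j}.
\]
Applying Proposition~\ref{prop:somme}~(ii) to $M = \mathsf{Mat}_f$ yields $\lambda \in \multspec(\mathsf{Mat}_f) = \multspec(f)$ and $d \in \N$ with $\sum_{i=1}^m (\mathsf{Mat}_f^{\,n})_{i,j} = \Theta(n^d \lambda^n)$, which is the claim.
\end{proof}
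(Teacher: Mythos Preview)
Your proof is correct and matches the paper's own argument essentially verbatim: the paper simply observes $|f^n(a)|=\sum_{b\in A}|f^n(a)|_b=\sum_{b\in A} (\mathsf{Mat}_f^n)_{b,a}$ and invokes Proposition~\ref{prop:somme}~(ii). Your additional remarks about the indexing convention and the mortal-letter edge case are accurate and add clarity beyond what the paper spells out.
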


\begin{proof}
    Simply observe that $$|f^n(a)|=\sum_{b\in A}|f^n(a)|_b=\sum_{b\in A} (\mathsf{Mat}_f^n)_{b,a}.$$
\end{proof}

\begin{definition}
    	The unique $d\in\N$ and $\lambda\ge 0$ associated with $a\in A$ in the above lemma 
	are denoted by $d(f,a)$ and $\lambda(f,a)$ respectively 
	(or simply, $\lambda(a)$ and $d(a)$ if there is no ambiguity on $f$).
\end{definition}

\begin{definition}\label{def:prolongable}
	A morphism $f \colon A^* \to A^*$ is {\em prolongable} on a letter $a\in A$ if $f(a)=au$ 
	for some $u\in A^+$ and $\lim_{n\to +\infty}|f^n(a)|=+\infty$. 
\end{definition}

\begin{remark}
	If a morphism $f$ is prolongable on a letter $a$, then the letter $a$ is not mortal and either $\lambda(a) >1$, or $\lambda(a) = 1$ and $d(a)\geq 1$.
\end{remark}
\begin{definition}
	An infinite word $\bw$ over $A$ is said to be {\em pure morphic} 
	if there is a morphism $f \colon A^* \to A^*$ prolongable on the first letter $a$ of $\bw$
	such that $\bw = f^\omega(a) := \lim_{n \to +\infty} f^n(a)$. 
	Convergence of a sequence of finite words to an infinite word is classical; 
	see, for instance, \cite{cant:2010}. 
	An infinite word is {\em morphic} if it is a morphic image of a pure morphic word. 
\end{definition}

Note that in the definition of a morphic word, the second morphism need not be a coding.

\begin{prop}	\label{prop:lambda-a}
	Let $f \colon A^* \to A^*$ be a morphism prolongable on the letter $a$. 
	If all letters of $A$ occur in $f^\omega(a)$, then $\lambda(a)=\rho(\mathsf{Mat}_f)$. 
\end{prop}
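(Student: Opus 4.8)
The statement to prove is: if $f\colon A^*\to A^*$ is prolongable on $a$ and every letter of $A$ occurs in $f^\omega(a)$, then $\lambda(a)=\rho(\mathsf{Mat}_f)$. The plan is to sandwich $\lambda(a)$ between $\rho(\mathsf{Mat}_f)$ and itself using the two inequalities separately. For the easy direction, $\lambda(a)\le\rho(\mathsf{Mat}_f)$, I would simply invoke the previous proposition: $|f^n(a)|=\sum_{b\in A}(\mathsf{Mat}_f^n)_{b,a}=\Theta(n^{d(a)}\lambda(a)^n)$ and $\lambda(a)\in\multspec(f)$, so $\lambda(a)\le\rho(\mathsf{Mat}_f)$ by definition of the spectral radius. (In fact $\lambda(a)\ge 1$ because $f$ is prolongable on $a$, as noted in the remark following Definition~\ref{def:prolongable}.)

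For the reverse inequality $\lambda(a)\ge\rho(\mathsf{Mat}_f)$, I would work with the digraph interpretation of $M:=\mathsf{Mat}_f$ together with Remark~\ref{rem:interpretation-lambda}. By Theorem~\ref{the:pf}, $\rho(M)$ is the spectral radius of some primitive (or $(0)$) diagonal block $P_\ell$ in the decomposition~\eqref{upper-triangular-primitive} of $M^{\p(M)}$ (up to a permutation of the alphabet); concretely, there is a letter $c\in A$ with $(M^{\p(M)})_{c,c}$ an entry of a block $P_\ell$ whose spectral radius, raised to the power $1/\p(M)$, equals $\rho(M)$. (That such a block realises the maximum follows because $\rho(M^{\p(M)})=\rho(M)^{\p(M)}$ and the spectrum of a block-triangular matrix is the union of the spectra of its diagonal blocks.) Now I use the hypothesis that every letter of $A$ occurs in $f^\omega(a)$: since $f^\omega(a)=\lim_n f^n(a)$ and $f(a)=au$, the letter $c$ occurs in $f^n(a)$ for all sufficiently large $n$, i.e.\ $(M^n)_{c,a}>0$ for all large $n$, hence in particular $(M^{\p(M)n_0})_{c,a}>0$ for some $n_0$. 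Equivalently there is a walk in the digraph of $M$ from $c$ to $a$ — wait, I must be careful with the direction: $(M^n)_{c,a}=|f^n(a)|_c$ counts occurrences of $c$ in $f^n(a)$, which in the digraph convention of Remark~\ref{rem:digraph} corresponds to walks \emph{from $c$ to $a$}. So there is a walk from $c$ to $a$ passing through the component of $c$, which has spectral radius $\rho(P_\ell)$. Applying Remark~\ref{rem:interpretation-lambda} (case (ii), with $j=a$): $\lambda(a)$, which by Proposition~\ref{prop:somme}(ii) governs $\sum_i(M^n)_{i,a}=|f^n(a)|$, is the $\p(M)$th root of the greatest spectral radius of a diagonal block $P$ such that $(M^{\p(M)})_{k,k}$ is an entry of $P$ for some $k$ with $(M^{\p(M)n})_{k,a}>0$ for some $n$. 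Taking $k=c$ and $P=P_\ell$ shows $\lambda(a)\ge\rho(P_\ell)^{1/\p(M)}=\rho(M)$.

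Combining the two inequalities gives $\lambda(a)=\rho(\mathsf{Mat}_f)$.

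\textbf{Main obstacle.} The delicate point is matching conventions: Remark~\ref{rem:interpretation-lambda}(ii) is phrased for $\sum_i(M^n)_{i,j}$ with a fixed column index $j$, and one must check that $\lambda$ from Proposition~\ref{prop:somme}(ii) with $j=a$ is exactly the quantity $\lambda(f,a)$ from the reformulated proposition — this is immediate since $|f^n(a)|=\sum_{b\in A}(\mathsf{Mat}_f^n)_{b,a}$. One also needs that the block $P_\ell$ achieving $\rho(M^{\p(M)})$ among the diagonal blocks is genuinely reachable in the path-graph sense, i.e.\ that the letter $c$ indexing it satisfies $(M^{\p(M)n})_{c,a}>0$ for some $n$; this is precisely where the hypothesis "all letters occur in $f^\omega(a)$" is used, and without it the conclusion genuinely fails (e.g.\ if $M$ has a large-$\rho$ block sitting on letters that never appear). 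Everything else is bookkeeping with Remark~\ref{rem:interpretation-lambda} and the fact that $\rho(M)^{\p(M)}$ is the largest spectral radius among the diagonal blocks of $M^{\p(M)}$.
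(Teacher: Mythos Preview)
Your proof is correct and follows essentially the same approach as the paper: both invoke Remark~\ref{rem:interpretation-lambda} and use the hypothesis that every letter occurs in $f^\omega(a)$ to ensure that every diagonal block $P_\ell$ of $(\mathsf{Mat}_f)^{\p(\mathsf{Mat}_f)}$ is reachable from $a$, whence $\lambda(a)=\max_\ell\rho(P_\ell)^{1/\p(\mathsf{Mat}_f)}=\rho(\mathsf{Mat}_f)$. The paper does this in one stroke (the reachability condition in the remark becomes vacuous, so the max is over all blocks), whereas you split it into the two inequalities $\lambda(a)\le\rho(\mathsf{Mat}_f)$ and $\lambda(a)\ge\rho(\mathsf{Mat}_f)$, but the substance is identical.
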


\begin{proof}
	Let $p=\p(\mathsf{Mat}_f)$. 
	Without loss of generality, we can suppose that $(\mathsf{Mat}_f)^p$ 
	is of the from~\eqref{upper-triangular-primitive}. 
	From Remark~\ref{rem:interpretation-lambda} 
	we know that $\lambda(a)$ is the $p$th root of the greatest spectral radius
	of the diagonal blocks $P_\ell$ in~\eqref{upper-triangular-primitive} 
	for which there exists $b\in A$ such that $((\mathsf{Mat}_f)^p)_{b,b}$ is an entry of $P_\ell$ and 
	$((\mathsf{Mat}_f)^{pn})_{b,a}=|f^n(a)|_b>0$ for some $n\in\N$.
	As  all letters of $A$ occur in $f^\omega(a)$, for every $b\in A$, we have $|f^n(a)|_b>0$ for some $n\in\N$.
	Hence the conclusion: $\lambda(a)=\max\{\rho(P_\ell)^{1/p}\,\mid\, 1\le \ell\le h\}=\rho(\mathsf{Mat}_f)$. 
\end{proof}

\begin{definition}\label{def:pure}
	An infinite word $\bw$ over $A$ is said to be {\em $(\lambda,d)$-pure morphic} if 
	\begin{itemize}
	\item there exists a morphism $f \colon A^* \to A^*$ prolongable on the first letter $a$ of $\bw$ 
		such that $\bw = f^\omega(a)$;
	\item $\lambda=\lambda(f,a)$ and $d=d(f,a)$;
	\item all letters of $A$ occur in $\bw$.
	\end{itemize}	
	The pair $(\lambda,d)$ is called the {\em growth type} of $f$ w.r.t.~$a$. 
	If $\lambda$ is greater than $1$, the morphism $f$ is said to be {\em exponential} w.r.t.~$a$.
	In this case, we usually omit the information on the
	degree $d$, and simply mention that we have a $\lambda$-pure morphic word. 
	Otherwise, if $\lambda=1$, the morphism $f$ is said to be {\em polynomial of degree $d$} w.r.t.~$a$.
\end{definition}

\begin{remark}\label{rem:pure}
	As in~\cite{Durand:2011}, we impose in the definition of a pure morphic word 
	that all letters of the alphabet of the morphism occur in $\bw$. 
	This is required to have well-defined $(\lambda,d)$-pure morphic words. 
	Indeed, consider the morphism $f\colon \{0,1,2\}^* \to \{0,1,2\}^*$ defined by $f(0)=0001$, $f(1)=12$ and $f(2)=21$. 
	The  Perron eigenvalue of $f$ is $3$, but we do not want to say that $f^\omega(1)$ is $3$-pure morphic. 
	With the definition we consider, the restriction $f'$ of $f$ to $\{1,2\}^*$ provides the $2$-pure morphic word $f'^\omega(1)$.
\end{remark}

\subsection{Avoiding erasing morphisms}\label{sec:era}

The following result is classical. 

\begin{theorem}\cite{Cobham:68}  \label{thm: cobham erasing}
	Let $\bw$ be a morphic word.
	Then there exist a non-erasing morphism $\sigma$ prolongable on  a letter $b$ 
	and a coding $\tau$ such that $\bw = \tau(\sigma^\omega(b))$.
\end{theorem}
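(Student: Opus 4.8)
The statement to prove is Theorem~\ref{thm: cobham erasing} (Cobham's theorem on avoiding erasing morphisms): every morphic word $\bw$ can be written as $\tau(\sigma^\omega(b))$ with $\sigma$ non-erasing and prolongable and $\tau$ a coding. Since a morphic word is by definition $g(f^\omega(a))$ for some prolongable $f\colon A^*\to A^*$ and some morphism $g\colon A^*\to C^*$, the plan is to proceed in two independent reductions: first replace the (possibly non-coding, possibly erasing) morphism $g$ by a coding; then replace the (possibly erasing) morphism $f$ by a non-erasing one, while preserving the coding obtained in the first step (up to composition).

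\textbf{Step 1: reduce $g$ to a coding.} Given $\bw = g(f^\omega(a))$, I would introduce a new alphabet $A'$ consisting of pairs $(a,k)$ where $a\in A$ and $1\le k\le |g(a)|$ (discarding letters $a$ with $g(a)=\varepsilon$, which can be handled separately or absorbed), together with a coding $\pi\colon A'\to C$ sending $(a,k)$ to the $k$-th letter of $g(a)$. The morphism $f$ lifts to a morphism $f'$ on $A'^*$ by spelling out, for each pair $(a,k)$, the concatenation of the pairs coding the letters of $g(b_1)g(b_2)\cdots$ where $f(a)=b_1b_2\cdots$ — this is exactly the ``dilatation'' phenomenon the authors flagged in Section~\ref{sec:2}. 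One then checks $f'$ is prolongable on the pair corresponding to the first letter of $f(a)$ emitted with a nonempty $g$-image, that $\pi(f'^\omega(\cdot)) = g(f^\omega(a)) = \bw$, and that $\mathsf{Mat}_{f'}$ is a dilated matrix of $\mathsf{Mat}_{g\circ\text{(stuff)}}$ in the sense of Definition~\ref{def:dil}. After this step we may assume $\bw = \tau_0(h^\omega(a))$ with $\tau_0$ a coding and $h$ a (still possibly erasing) prolongable morphism.

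\textbf{Step 2: remove erasing from $h$.} This is the genuinely delicate part. The classical trick is to look at $h$ restricted to immortal letters $A_\mathcal{I}$ (Definition of mortal/immortal). For an immortal letter $c$, $h(c)$ contains at least one immortal letter, but possibly only after stripping a prefix and suffix of mortal letters; and iterating, the mortal ``padding'' between consecutive immortal letters in $h^n(c)$ may grow. The standard resolution is to pass to a power $h^t$ chosen so that the mortal padding stabilizes in a controlled way, i.e., so that for every immortal $c$, in $h^t(c)$ each maximal mortal block has bounded image growth, and then to encode ``immortal letter together with a bounded amount of adjacent mortal context'' as a single new letter. Concretely: since mortal letters $c$ satisfy $h^{n}(c)=\varepsilon$ for $n$ at most $|A_\mathcal{M}|$ (a uniform bound), the contribution of any mortal letter to $\bw$ through $\tau_0$ vanishes after finitely many iterations — but it can still contribute to $\bw$ at intermediate stages. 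One defines $\sigma$ on an alphabet whose letters are immortal letters of $h$ decorated so that $\sigma$ faithfully reproduces, between two consecutive ``immortal positions,'' the finite word that those mortal blocks ultimately produce. Making $\sigma$ non-erasing requires that every new letter has at least one immortal letter in its $\sigma$-image, which holds because immortal letters have immortal descendants; and prolongability on the decorated version of $a$ follows from prolongability of $h$ on $a$.

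\textbf{Expected main obstacle.} The hard part is Step 2, specifically the bookkeeping that guarantees the new morphism is simultaneously (a) non-erasing, (b) prolongable, and (c) that a single coding $\tau$ recovers $\bw$ — i.e., that no information about the mortal ``filler'' is lost and that the filler contributed by a mortal block is a \emph{bounded-length}, eventually-stable word so it can be packaged into finitely many new letters. The cleanest route is: choose $p=\p(\mathsf{Mat}_h)$ or more simply a power $t$ killing all mortal letters' eventual images, analyze $h^t(c)$ for immortal $c$ as (mortal prefix)(immortal letter)(mortal block)(immortal letter)$\cdots$(mortal suffix), observe these mortal blocks have $h$-images of uniformly bounded ultimate length, and let the new alphabet record each immortal letter paired with the word of length $\le$ that bound sitting to its right up to (but not including) the next immortal letter. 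I would then verify the three properties directly and compose $\tau = \tau_0 \circ (\text{forget the decoration})$, possibly after one more dilatation to account for decorations of different lengths, which again keeps matrices in the $\dil$ relation — tying back to Proposition~\ref{pro:dil} for the later spectral-radius statements.
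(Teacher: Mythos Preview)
Your two-step plan reverses the order of the paper's argument, and Step~1 as you describe it does not work. The ``lift'' $f'$ you define sends \emph{every} pair $(a,k)$ to the full concatenation $\alpha(f(a))$ (the pairs coding $g(b_1)g(b_2)\cdots$), independently of $k$; then $f'(\alpha(a))=\alpha(f(a))^{|g(a)|}\neq\alpha(f(a))$, so $\pi\circ f'^n\neq g\circ f^n$ and the fixed points do not match. The correct construction is the one in Algorithm~\ref{algo:cn}: one must \emph{factorize} $\alpha(f(b))=w_{b,0}\cdots w_{b,|g(b)|-1}$ and set $\sigma((b,i))=w_{b,i}$. But for such a factorization with all $w_{b,i}\in\Pi^+$ to exist one needs $|g(f(b))|\ge|g(b)|$ for every $b$ (and strict inequality for $a$), which is exactly condition~\eqref{eq:cn}. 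You never establish this, and when $g$ is erasing it can fail dramatically: a \emph{moribund} letter $b$ has $g(b)\neq\varepsilon$ yet $g(f(b))=\varepsilon$, so your parenthetical ``discarding letters $a$ with $g(a)=\varepsilon$, which can be handled separately or absorbed'' misses the real obstruction.

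This is why the paper proceeds in the opposite order. First, Proposition~\ref{prop:f-g-non-effacants} removes erasing from \emph{both} $f$ and $g$ simultaneously, by iteratively applying Lemma~\ref{lemma 1} (kill mortal letters of $f$) and Lemma~\ref{lemma 2} (kill the largest $f$-stable sub-alphabet erased by $g$); the iteration is needed precisely because of moribund letters. Second, Lemma~\ref{lemme4}/Algorithm~\ref{algo 2} passes to suitable powers $f^q$ and $g\circ f^p$ to secure~\eqref{eq:cn}. Only then does the pair-alphabet construction of Algorithm~\ref{algo:cn} go through, yielding $\sigma$ non-erasing and $\tau$ a coding in one stroke (Proposition~\ref{prop:algo}); no separate ``Step~2'' is needed afterwards, since~\eqref{eq:cn} already forces each $w_{b,i}\neq\varepsilon$. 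Your Step~2 (decorating immortal letters with bounded mortal context) is a legitimate alternative route for removing erasing from $f$ alone---it is essentially the Allouche--Shallit/Pansiot argument and the boundedness of mortal blocks follows from Lemma~\ref{lemma 1} via $f^\omega(a)=f^k(f_\mathcal{I}^\omega(a))$---but it does nothing about erasing in $g$, so you would still need the content of Proposition~\ref{prop:f-g-non-effacants} and Lemma~\ref{lemme4} before the coding reduction can be carried out.
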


In this section, given a morphism $f$ prolongable on a letter $a$ and a morphism $g$ 
such that $g(f^{\omega}(a))$ is an infinite word, we present an algorithm 
to obtain a morphism $\sigma$ and a coding $\tau$ as in Theorem~\ref{thm: cobham erasing}. 
Our main contribution is an in-depth analysis of the respective growth types of $f$ and $\sigma$.

Proofs of Theorem~\ref{thm: cobham erasing} can be found 
in \cite{Pansiot:83,Allouche&Shallit:2003,Cassaigne&Nicolas:2003} or \cite{Honkala:2009} where the strategy is a factorization into elementary morphisms. 
Since our aim is to compare the growth type of $\sigma$ 
with that of $f$, we present a constructive proof of it, mainly based on~\cite{Cassaigne&Nicolas:2003}.
The algorithm is divided into three steps.  
First, one shows that the morphisms $f$ and $g$ can be chosen to be non-erasing.
This step is omitted in \cite{Cassaigne&Nicolas:2003}. 
The second step is a technicality that ensures that the length of the images under $(g \circ f^n)$ is non-decreasing with $n$.
The last step consists in building the morphisms $\sigma$ and $\tau$. 

\begin{lemma} \label{lemma 1}
	Let $f\colon A^* \to A^*$ be a morphism prolongable on a letter $a$.
	Let $k=\# A_\mathcal{M}$ be the number of mortal letters of $f$.
	Then the morphism
	\[
		f_\mathcal{I} := \restriction{(\kappa_{A,A_\mathcal{M}} \circ f)}{A_{\mathcal{I}}^*}
		\colon A_\mathcal{I}^* \to A_\mathcal{I}^*
	\]
	is non-erasing and such that $f^{\omega}(a)= f^k(f_{\mathcal{I}}^\omega(a))$. 
	Moreover, we have:
	\begin{itemize}
	\item For all $\ell\in\mathbb{Z}_{\ge k}$ and all $n\in\mathbb{Z}_{\ge 1}$,
		$f^\ell\circ f_\mathcal{I}^n=\restriction{f^{n+\ell}}{{A_{\mathcal{I}}^*}}$;
	\item $\mathsf{Mat}_{f_\mathcal{I}}=(\mathsf{Mat}_f)_{A_\mathcal{I}}$.
	\end{itemize}
\end{lemma}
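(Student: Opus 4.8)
The plan is to unpack the definition of $f_\mathcal{I}$ and verify each claimed property in turn, the central mechanism being that iterating $f$ only ``loses'' letters that are mortal, and every mortal letter dies within $k$ steps of $f$.

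\emph{Step 1: $f_\mathcal{I}$ is well defined and non-erasing.}
First I would check that $\kappa_{A,A_\mathcal{M}} \circ f$ maps $A_\mathcal{I}^*$ into $A_\mathcal{I}^*$, so that the restriction makes sense as a morphism $A_\mathcal{I}^* \to A_\mathcal{I}^*$: if $b$ is immortal, then $f(b)$ must contain an immortal letter (otherwise all letters of $f(b)$ die within $k$ steps, forcing $f^{k+1}(b)=\varepsilon$), so $\kappa_{A,A_\mathcal{M}}(f(b))$ is non-empty and lies in $A_\mathcal{I}^*$. This simultaneously gives that $f_\mathcal{I}$ is non-erasing. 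I should also note $a\in A_\mathcal{I}$ (since $f$ is prolongable on $a$, by the remark preceding the lemma) and that $f_\mathcal{I}(a)=\kappa_{A,A_\mathcal{M}}(f(a))$ still begins with $a$ and has images of unbounded length, so $f_\mathcal{I}$ is prolongable on $a$.

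\emph{Step 2: the identity $f^\ell\circ f_\mathcal{I}^n=\restriction{f^{n+\ell}}{A_\mathcal{I}^*}$ for $\ell\ge k$, $n\ge 1$.}
The key observation is that for any word $w$, $f^k(\kappa_{A,A_\mathcal{M}}(w)) = f^k(w)$, because deleting mortal letters from $w$ and then applying $f^k$ is the same as applying $f^k$ and then (automatically) having those mortal contributions vanish — indeed $f^k(c)=\varepsilon$ for every mortal letter $c$, since $k=\#A_\mathcal{M}$ bounds the ``death time'' (a standard pigeonhole argument on the decreasing supports $\supp(f^j(c))\cap A_\mathcal{M}$). More generally, for $\ell\ge k$, $f^\ell(\kappa_{A,A_\mathcal{M}}(w))=f^\ell(w)$. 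From this, the identity $f^{n+\ell}\restriction{}{A_\mathcal{I}^*}=f^\ell\circ f_\mathcal{I}^n$ follows by induction on $n$: for $n=1$ and $b\in A_\mathcal{I}$, $f^\ell(f_\mathcal{I}(b))=f^\ell(\kappa_{A,A_\mathcal{M}}(f(b)))=f^\ell(f(b))=f^{\ell+1}(b)$; for the inductive step, write $f^{n+1+\ell}\restriction{}{A_\mathcal{I}^*}=f^\ell\circ(f\circ f_\mathcal{I})\circ f_\mathcal{I}^{n-1}$-style bookkeeping, or more cleanly apply the $n=1$ case with $\ell$ replaced by $\ell+n$ and then the induction hypothesis. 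Taking $\ell=k$, $n\to\infty$ and using continuity of morphisms on $A^\mathbb{N}$ yields $f^\omega(a)=\lim_n f^k(f_\mathcal{I}^n(a))=f^k(f_\mathcal{I}^\omega(a))$.

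\emph{Step 3: the matrix identity.}
Since $f_\mathcal{I}$ is a sub-morphism of $f$ on the sub-alphabet $A_\mathcal{I}$ — note $f_\mathcal{I}(b)=\kappa_{A,A_\mathcal{M}}(f(b))$ is $f(b)$ with all mortal letters erased, so $|f_\mathcal{I}(b)|_c=|f(b)|_c$ for every immortal $c$ — Definition~\ref{def:restr} and the observation following it give $\mathsf{Mat}_{f_\mathcal{I}}=(\mathsf{Mat}_f)_{A_\mathcal{I}}$ directly.

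\emph{Main obstacle.}
I expect the only genuinely delicate point to be the sharp bound that every mortal letter is killed by exactly $k=\#A_\mathcal{M}$ iterations of $f$, i.e.\ $f^k\restriction{}{A_\mathcal{M}^*}$ is identically $\varepsilon$; everything else is then a routine induction using $\kappa_{A,A_\mathcal{M}}\circ f^k = f^k$ on mortal-deletions. The bound follows because if $c$ is mortal with $f^N(c)=\varepsilon$ minimal, the supports $\supp(f(c)),\supp(f^2(c)),\dots$ are all contained in $A_\mathcal{M}$ and form, after the first step, a strictly decreasing chain of nonempty subsets until they hit $\varepsilon$, forcing $N\le k$.
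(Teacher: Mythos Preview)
Your proposal is correct and follows essentially the same route as the paper: both arguments rest on (i) the fact that every mortal letter dies within $k$ iterations, giving $f^\ell\circ\kappa_{A,A_\mathcal{M}}=f^\ell$ for $\ell\ge k$, and (ii) an inductive identity relating $f_\mathcal{I}^n$ to $f^n$ (the paper phrases this as $(\kappa_{A,A_\mathcal{M}}\circ f)^n=\kappa_{A,A_\mathcal{M}}\circ f^n$, you go straight to $f^\ell\circ f_\mathcal{I}^n=\restriction{f^{n+\ell}}{A_\mathcal{I}^*}$), followed by a passage to the limit.

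One small correction in your ``main obstacle'' paragraph: the supports $\supp(f^j(c))$ do \emph{not} in general form a decreasing chain --- take $f\colon c\mapsto d,\ d\mapsto e,\ e\mapsto\varepsilon$, where the successive supports are $\{d\},\{e\},\emptyset$. The standard pigeonhole is different: if $N(c)$ denotes the minimal $n$ with $f^n(c)=\varepsilon$, then whenever $N(c)\ge 2$ the word $f(c)$ contains a letter $c'$ with $N(c')=N(c)-1$, so the set $\{N(c):c\in A_\mathcal{M}\}$ contains $1,2,\ldots,\max_c N(c)$ and hence $\max_c N(c)\le k$. The paper uses a variant of this (arguing that each $f^{i+1}(b)$ exhibits a fresh mortal letter). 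Fixing this detail, your proof goes through as written.
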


\begin{proof}
First observe that $a\in A_\mathcal{I}$ and that $f_\mathcal{I}$ is non-erasing by definition.

Since $k$ is the number of mortal letters, it follows that  $f^k(b) = \varepsilon$ for all $b \in A_\mathcal{M}$. 
Indeed, proceed by contradiction and suppose that there exists $b \in A_\mathcal{M}$ such that $f^k(b)\neq\varepsilon$. 
Then $b,f(b),\ldots,f^k(b)$ are non-empty words over $A_\mathcal{M}$ and for each $i$, $f^{i+1}(b)$ 
must contain a letter not occurring in $b,\ldots,f^i(b)$. 
Hence the number of mortal letters would be greater than $k$.

We set $\kappa_\mathcal{M}=\kappa_{A,A_\mathcal{M}}$ and $\bw=f^{\omega}(a)$. 
Observe that $\bw=f^k(\bw)$. 
Then we also have $\bw= f^k\circ \kappa_\mathcal{M} (\bw)$. 

It remains to prove that $\kappa_\mathcal{M}(\bw)=f_{\mathcal{I}}^\omega(a)$. 
First, we show by induction on $n$ that 
\begin{equation}   \label{eq:lambda}
	(\kappa_\mathcal{M}\circ f)^n=\kappa_\mathcal{M}\circ f^n
\end{equation}
for all positive integers $n$. The result is obvious for $n=1$. 
We get
\[
	(\kappa_\mathcal{M}\circ f)^{n+1}
	=\kappa_\mathcal{M}\circ f \circ (\kappa_\mathcal{M}\circ f)^n
	= \kappa_\mathcal{M}\circ f \circ \kappa_\mathcal{M}\circ f^n
\]
where we used the induction hypothesis for the last equality. 
To conclude with the induction step, observe that 
$\kappa_\mathcal{M}\circ f \circ \kappa_\mathcal{M}=\kappa_\mathcal{M}\circ f$. 
It is a consequence of the fact that, for all $b\in A_{\mathcal{M}}$, $f(b)\in A_{\mathcal{M}}^*$.

On the one hand, $\kappa_\mathcal{M}\circ f^n(a)$ tends to $\kappa_\mathcal{M}(\bw)$ as $n\to +\infty$.  
On the other hand, thanks to \eqref{eq:lambda},  for all $n\ge 1$, 
$\kappa_\mathcal{M}\circ f^n(a)=(\kappa_\mathcal{M}\circ f)^n(a)=f_\mathcal{I}^n(a)$ 
which tends to $f_\mathcal{I}^\omega(a)$ as $n\to +\infty$. 
By uniqueness of the limit, it follows that $\kappa_\mathcal{M}(\bw)=f_{\mathcal{I}}^\omega(a)$.

We turn to the second part of the proof. 
As $f^k(b) = \varepsilon$ for all $b \in A_\mathcal{M}$, we have that, for all $\ell\ge k$, $f^\ell\circ \kappa_\mathcal{M}=f^\ell$.
Then, for all $b\in A_\mathcal{I}$, 
\begin{eqnarray*}
	f^\ell\circ f_\mathcal{I}^n(b) &=& f^\ell\circ (\kappa_{\mathcal{M}} \circ f)^n(b)\\ 
							&=& f^\ell\circ \kappa_{\mathcal{M}} \circ f^n(b)\\
							&=& f^{n+\ell}(b).
\end{eqnarray*}

To conclude with the proof, up to a permutation (corresponding to a reordering of the alphabet 
where all the immortal letters appear first), the matrix $\mathsf{Mat}_f$ can be written as 
\[
	\begin{pmatrix}
	    (\mathsf{Mat}_f)_{A_\mathcal{I}}& 0\\ \star & (\mathsf{Mat}_f)_{A_\mathcal{M}}\\
	\end{pmatrix}.
\]
Hence $\mathsf{Mat}_{f_\mathcal{I}}=(\mathsf{Mat}_f)_{A_\mathcal{I}}$. 
\end{proof}

The idea of the next statement is to remove the largest sub-morphism of $f$ 
whose alphabet is erased by $g$ (the result is stated in a slightly more general form 
where we consider any sub-morphism whose alphabet is erased by $g$). 
Recall that the notation $f_C$, for a sub-morphism of $f$, was introduced in Definition~\ref{def:restr}.

\begin{lemma} \label{lemma 2}
	Let $f\colon B^* \to B^*$ be a morphism prolongable on a letter $a$
	and $g \colon B^* \to A^*$ be a morphism such that 
	$g(f^\omega(a))$ is an infinite word. 
	Let $C$ be a sub-alphabet of $\{b\in B\mid g(b)=\varepsilon\}$ such that 
	$f_C$ is a sub-morphism of $f$.
	Then the morphisms 
	\[
		f_\varepsilon:= \restriction{(\kappa_{B,C} \circ f)}{(B \setminus C)^*}\colon (B\setminus C)^*\to (B\setminus C)^*  
		\ \text{ and }\ 
		g_\varepsilon := \restriction{g}{(B \setminus C)^*}\colon (B\setminus C)^*\to A^*
	\]
	are such that $g(f^\omega(a)) = g_\varepsilon(f_\varepsilon^\omega(a))$.
	Moreover, we have:
	\begin{itemize}
	\item For all $n\in\N$, $g_\varepsilon \circ f_\varepsilon^n = \restriction{(g \circ f^n)}{(B \setminus C)^*}$;
	\item $\{b\in B\,\mid\, g(f^n(b)) \neq \varepsilon \ \text{for all large enough } n\}\subseteq B \setminus C$;
	\item $\mathsf{Mat}_{f_\varepsilon}=(\mathsf{Mat}_f)_{B \setminus C}$.
	\end{itemize}
\end{lemma}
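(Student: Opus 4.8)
The plan is to mirror closely the structure of the proof of Lemma~\ref{lemma 1}, since the two statements are formally very similar: here we are deleting the sub-alphabet $C$ (on which $g$ vanishes and which is $f$-stable) rather than the mortal letters, and the role of "$f^k$ collapses $A_\mathcal{M}$" is now played by the simpler fact "$g$ kills $C$ outright". First I would record the two elementary facts that make everything go: (a) since $f(C)\subseteq C^*$, for every $b\in C$ we have $f^n(b)\in C^*$ for all $n$, hence $\kappa_{B,C}\circ f(b)=\varepsilon$; and (b) since $C\subseteq\{b\in B\mid g(b)=\varepsilon\}$, we have $g\circ\kappa_{B,C}=g$ as morphisms on $B^*$ (both send a letter of $C$ to $\varepsilon$ and agree elsewhere). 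In particular $a\notin C$ because $f$ is prolongable on $a$, so $a$ is immortal and cannot lie in the $f$-stable mortal-type sub-alphabet $C$ — more simply, $g(f^\omega(a))$ infinite forces $a$ to generate letters outside $C$, so $a\in B\setminus C$ and $f_\varepsilon$ is indeed prolongable on $a$.

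Next I would prove the analogue of \eqref{eq:lambda}, namely
\[
	(\kappa_{B,C}\circ f)^n=\kappa_{B,C}\circ f^n \quad\text{for all } n\ge 1,
\]
by induction on $n$, the inductive step using $\kappa_{B,C}\circ f\circ\kappa_{B,C}=\kappa_{B,C}\circ f$, which is fact~(a) above rephrased (for $b\in C$ both sides give $\varepsilon$, for $b\notin C$ both give $\kappa_{B,C}(f(b))$). Restricting this identity to $(B\setminus C)^*$ gives $f_\varepsilon^n=\restriction{(\kappa_{B,C}\circ f^n)}{(B\setminus C)^*}$. Composing on the left with $g$ and using $g\circ\kappa_{B,C}=g$ from fact~(b) yields $g_\varepsilon\circ f_\varepsilon^n=\restriction{(g\circ f^n)}{(B\setminus C)^*}$, the first bullet point. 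Letting $n\to\infty$: on the one hand $(g\circ f^n)(a)\to g(f^\omega(a))$; on the other hand $g_\varepsilon(f_\varepsilon^n(a))=(g\circ f^n)(a)$ for each $n$, and $f_\varepsilon^n(a)\to f_\varepsilon^\omega(a)$ with $g_\varepsilon$ continuous, so $g_\varepsilon(f_\varepsilon^\omega(a))=g(f^\omega(a))$ by uniqueness of the limit. (One must check the limit $f_\varepsilon^\omega(a)$ exists, i.e. $|f_\varepsilon^n(a)|\to\infty$; this is exactly the requirement that $g(f^\omega(a))$ be infinite, since $g_\varepsilon(f_\varepsilon^n(a))$ is a prefix of the infinite word $g(f^\omega(a))$ of length tending to infinity.)

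The second bullet, $\{b\in B\mid g(f^n(b))\neq\varepsilon \text{ for all large } n\}\subseteq B\setminus C$, is immediate: if $b\in C$ then $f^n(b)\in C^*$ for all $n$, so $g(f^n(b))=\varepsilon$ for all $n$, hence $b$ is not in the left-hand set. For the third bullet, $\mathsf{Mat}_{f_\varepsilon}=(\mathsf{Mat}_f)_{B\setminus C}$, I would argue exactly as at the end of the proof of Lemma~\ref{lemma 1}: since $f(C)\subseteq C^*$, no letter of $C$ occurs in any $f(b)$ with $b\notin C$ — wait, that is false in general; rather, reorder $B$ so that the letters of $C$ come first, and observe $f(C)\subseteq C^*$ means the "$C$-rows, $(B\setminus C)$-columns" block of $\mathsf{Mat}_f$ is what we must discard, and $f_\varepsilon=\restriction{(\kappa_{B,C}\circ f)}{(B\setminus C)^*}$ simply deletes the $C$-rows from the $(B\setminus C)$-columns, leaving precisely $(\mathsf{Mat}_f)_{B\setminus C}$. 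I expect the only genuinely delicate point to be the existence of the limit $f_\varepsilon^\omega(a)$ — equivalently, verifying that $f_\varepsilon$ is prolongable on $a$ — which must be extracted from the hypothesis that $g(f^\omega(a))$ is infinite rather than taken for granted; everything else is bookkeeping with the identities $\kappa_{B,C}\circ f\circ\kappa_{B,C}=\kappa_{B,C}\circ f$ and $g\circ\kappa_{B,C}=g$.
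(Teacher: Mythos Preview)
Your plan is correct and follows essentially the same route as the paper: both arguments hinge on the identity $(\kappa_{B,C}\circ f)^n=\kappa_{B,C}\circ f^n$ (proved by induction via $\kappa_{B,C}\circ f\circ\kappa_{B,C}=\kappa_{B,C}\circ f$, which uses $f(C)\subseteq C^*$) together with $g=g\circ\kappa_{B,C}$ (which uses $C\subseteq g^{-1}(\varepsilon)$), and both extract the prolongability of $f_\varepsilon$ on $a$ from the infiniteness of $g(f^\omega(a))$. The only cosmetic difference is ordering: the paper first establishes $\kappa_{B,C}(f^\omega(a))=f_\varepsilon^\omega(a)$ and then reads off the main equality, whereas you first prove the bullet $g_\varepsilon\circ f_\varepsilon^n=\restriction{(g\circ f^n)}{(B\setminus C)^*}$ and pass to the limit; your handling of the matrix identity and the second bullet is identical to the paper's.
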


\begin{proof}
Let us prove that $ g(f^\omega(a)) = g_\varepsilon(f_\varepsilon^\omega(a))$.
We have $g =g_\varepsilon \circ \kappa_{B,C}$. 
Since $f(C)\subseteq C^*$, 
we can use exactly the same reasoning as in~\eqref{eq:lambda} and get, for all $n\in\mathbb{Z}_{\ge 1}$, 
\begin{equation}\label{eq:lemme 2}
	\kappa_{B,C} \circ f^n =\kappa_{B,C} \circ f^n \circ \kappa_{B,C}=(\kappa_{B,C} \circ f)^n.
\end{equation}
Hence for all $n\in\mathbb{Z}_{\ge 1}$,
\begin{eqnarray*}
	 g(f^\omega(a)) 
	&=& g_\varepsilon\circ \kappa_{B,C} \circ f^n (f^\omega(a))	\\
        &=& g_\varepsilon\circ \kappa_{B,C} \circ f^n \circ \kappa_{B,C} (f^\omega(a))	\\
	&=& g_\varepsilon\circ (\kappa_{B,C} \circ f)^n \circ \kappa_{B,C}(f^\omega(a))	\\
	&=& g_\varepsilon\circ f_\varepsilon^n  \circ \kappa_{B,C}(f^\omega(a)).
\end{eqnarray*}
We have $a \notin C$ and $f_\varepsilon(a) \in a (B \setminus C)^+$, for otherwise  $g(f^\omega(a))$ would be finite.
Thus, $f_\varepsilon$ is prolongable on $a$ and 
\[
	\kappa_{B,C}(f^\omega(a))=f_\varepsilon^\omega(a).
\]

We turn to the second part of the proof. 
First, using~\eqref{eq:lemme 2}, we obtain that for all $n\in\N$ and all $b \in B \setminus C$,
\[
	 g_\varepsilon \circ f_\varepsilon^n(b)
		=g \circ (\kappa_{B,C} \circ f)^n(b) 
		= g \circ \kappa_{B,C} \circ f^n(b) 
		= g \circ f^n(b).
\]
Then, since $f(C)\subseteq C^*$ and all letters are  $C$ is erased by $g$, 
we have that for all $b\in C$ and all $n\in\N$, $g(f^n(b)) = \varepsilon$.
Therefore $\{b\in B\,\mid\, g(f^n(b)) \neq \varepsilon \ \text{for all large enough} n\}\subseteq B \setminus C$.
Finally, from the construction of $f_\varepsilon$, we have (up to a reordering of the alphabet)
\[
	\mathsf{Mat}_f = 
	\left(
	\begin{array}{cc}
		\mathsf{Mat}_{f_\varepsilon}	&	0	\\
		\star					&	\mathsf{Mat}_{f_C}
	\end{array}
	\right).
\]
\end{proof}

The next proposition concludes with the first step of the algorithm that consists in getting rid of the effacement. This leads to Algorithm \ref{algo1} given below. 
The proof goes by iterating the previous two lemmas.

\begin{prop} \label{prop:f-g-non-effacants}
	Let $f\colon B^* \to B^*$ be a morphism prolongable on a letter $a$
	and $g \colon B^* \to A^*$ be a morphism such that $g(f^\omega(a))$ is an infinite word. 
	Let $p = \mathsf{p}(\mathsf{Mat}_f)$ as in Definition~\ref{def:power}.
	and let $B'$ be the following sub-alphabet of $B$:
	\[
		B'=\{b\in B\,\mid\,  g(f^{pn}(b)) \neq \varepsilon\ \text{for all large enough } n\}.
	\]
	Then there exist non-erasing morphisms $f'\colon B'^* \to B'^*$ and $g' \colon B'^* \to A^*$ 
	such that $g(f^\omega(a)) = g'(f'^\omega(a))$.
	More precisely 
	$f'=\restriction{(\kappa_{B,B\setminus B'} \circ f^p)}{B'^*}$ 
	and $\mathsf{Mat}_{f'} = (\mathsf{Mat}_{f^p})_{B'}$.	
\end{prop}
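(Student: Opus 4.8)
The plan is to combine Lemmas~\ref{lemma 1} and~\ref{lemma 2} in an iterative fashion. The morphism $f$ may have mortal letters that are \emph{not} erased by $g$ (so Lemma~\ref{lemma 1} alone does not suffice to make $g$ non-erasing), and conversely the sub-alphabet of letters erased by $g$ need not be stable under $f$ (so Lemma~\ref{lemma 2} cannot be applied directly with that full sub-alphabet). The key observation to reconcile these is that passing to the power $f^p$ with $p = \mathsf{p}(\mathsf{Mat}_f)$ puts $\mathsf{Mat}_{f^p}$ in the block-triangular form~\eqref{upper-triangular-primitive} (up to a permutation, via Proposition~\ref{prop:power}), and for such a matrix, by Remark~\ref{rem:submorphism-matrix}, the sub-alphabet corresponding to any union of entire diagonal blocks gives a sub-morphism. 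So I first replace $f$ by $f^p$ (the fixed point is unchanged: $f^\omega(a) = (f^p)^\omega(a)$ since $f$ is prolongable on $a$, hence so is $f^p$), and then work exclusively with $f^p$.

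\emph{Step 1: identify the right stable sub-alphabet.} Let $C$ be the set of letters $b \in B$ such that $g(f^{pn}(b)) = \varepsilon$ for all $n \in \N$; equivalently (by the definition of $B'$) $C = B \setminus B'$ provided one checks that ``$g(f^{pn}(b)) \neq \varepsilon$ for all large $n$'' is equivalent to ``$g(f^{pn}(b)) \neq \varepsilon$ for some $n$'', i.e.\ that the property is monotone along the sequence $(f^{pn}(b))_n$. That monotonicity holds because each diagonal block of $\mathsf{Mat}_{f^p}$ is primitive or $(0)$: once a walk of length a multiple of $p$ in the associated digraph reaches, via a letter whose image under $g$ is nonempty, it continues to do so for all larger multiples of $p$ (this is precisely the content of Lemma~\ref{lemma:recurrent-argument} applied to $\mathsf{Mat}_{f^p}$, whose $\mathsf{p}$-value is $1$). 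I then check that $f^p(C) \subseteq C^*$: if $b \in C$ and some letter $c$ occurring in $f^p(b)$ were not in $C$, then $g(f^{pn}(c)) \neq \varepsilon$ for some $n$, hence $g(f^{p(n+1)}(b)) \supseteq g(f^{pn}(c)) \neq \varepsilon$, contradicting $b \in C$. So $C$ satisfies the hypothesis of Lemma~\ref{lemma 2}.

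\emph{Step 2: apply the two lemmas.} Apply Lemma~\ref{lemma 2} to the morphism $f^p \colon B^* \to B^*$ (prolongable on $a$), the morphism $g$, and the sub-alphabet $C$. This yields non-erasing-\emph{on-the-target-side} morphisms $(f^p)_\varepsilon \colon B'^* \to B'^*$ and $g_\varepsilon \colon B'^* \to A^*$ with $g(f^\omega(a)) = g((f^p)^\omega(a)) = g_\varepsilon((f^p)_\varepsilon^\omega(a))$, where $(f^p)_\varepsilon = \restriction{(\kappa_{B,C}\circ f^p)}{B'^*}$ and $\mathsf{Mat}_{(f^p)_\varepsilon} = (\mathsf{Mat}_{f^p})_{B'}$; moreover $g_\varepsilon$ restricted to $B'$ never erases in the limit, and by the very definition of $B'$ one checks $g_\varepsilon$ is actually non-erasing: for $b \in B'$ we have $g(f^{pn}(b)) \neq \varepsilon$ for large $n$, but if $g(b) = \varepsilon$ this does not immediately give $g_\varepsilon(b) \neq \varepsilon$, so a small extra argument is needed — one shows $B'$ can be further shrunk, or rather that after the construction every letter of $B'$ has nonempty $g_\varepsilon$-image because $g_\varepsilon \circ (f^p)_\varepsilon^n(b) = g\circ f^{pn}(b) \neq \varepsilon$ for large $n$, and since $(f^p)_\varepsilon$ is not yet known non-erasing we next apply Lemma~\ref{lemma 1} to $(f^p)_\varepsilon$ to kill its mortal letters. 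Write $f' := ((f^p)_\varepsilon)_{\mathcal I}$ and let $g'$ be the restriction of $g_\varepsilon$ to the immortal-letter alphabet of $(f^p)_\varepsilon$; Lemma~\ref{lemma 1} gives $(f^p)_\varepsilon^\omega(a) = (f^p)_\varepsilon^{k}(f'^\omega(a))$ for $k$ the number of mortal letters, whence $g(f^\omega(a)) = g_\varepsilon \circ (f^p)_\varepsilon^{k}(f'^\omega(a)) = g''(f'^\omega(a))$ with $g'' := g_\varepsilon \circ (f^p)_\varepsilon^{k}$, and $g''$ is non-erasing on the alphabet of $f'$ since for each such letter $b$, $g_\varepsilon \circ (f^p)_\varepsilon^{k+n}(b) = g \circ f^{p(k+n)}(b) \neq \varepsilon$ for large $n$ and, $b$ being immortal with $(f^p)_\varepsilon^{k}(b) \neq \varepsilon$, in fact $g''(b) = g_\varepsilon((f^p)_\varepsilon^k(b))$ is already nonempty because any letter $c$ in $(f^p)_\varepsilon^k(b)$ that is immortal lies in $B'$ and iterating reaches a letter with nonempty $g$-image. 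One concludes by identifying the resulting alphabet with $B'$ (the definition of $B'$ is cooked up so that this is exactly the alphabet surviving both reductions) and reading off $\mathsf{Mat}_{f'} = (\mathsf{Mat}_{f^p})_{B'}$ from the two matrix identities $\mathsf{Mat}_{(f^p)_\varepsilon} = (\mathsf{Mat}_{f^p})_{B'}$ and $\mathsf{Mat}_{f'} = (\mathsf{Mat}_{(f^p)_\varepsilon})_{\mathcal I}$ together with Remark~\ref{rem:submorphism-matrix}, which guarantees these selections are of entire diagonal blocks and therefore compose to a single block selection $(\mathsf{Mat}_{f^p})_{B'}$.

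\emph{The main obstacle} I anticipate is the bookkeeping that ties the two reductions together so that the final alphabet is exactly $B'$ as defined in the statement, and in particular the verification that $g'$ (the final morphism on the target side) is genuinely non-erasing rather than merely ``non-erasing in the limit'': this forces the passage to the power $f^p$ (so that ``nonempty for some $n$'' and ``nonempty for all large $n$'' coincide, via primitivity of the diagonal blocks and Lemma~\ref{lemma:recurrent-argument}) and a careful argument that on immortal letters a nonempty-image-eventually implies nonempty-image-now after absorbing the right finite power of the morphism. The matrix statement $\mathsf{Mat}_{f'} = (\mathsf{Mat}_{f^p})_{B'}$ is then essentially free from Remark~\ref{rem:submorphism-matrix}, as it certifies that both intermediate sub-matrices are block sub-matrices and hence so is their common refinement.
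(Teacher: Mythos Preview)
Your overall strategy --- pass to $f^p$, then apply Lemma~\ref{lemma 2} once with a carefully chosen $C$, then Lemma~\ref{lemma 1} once --- is a legitimate alternative to the paper's approach, which instead \emph{iterates} Lemma~\ref{lemma 1} followed by Lemma~\ref{lemma 2} (with the naive $C$ at each step) until stabilization. Your idea of taking $C$ maximal from the start is more direct, and indeed one can check that a single pass of Lemma~\ref{lemma 2} (with your $C$) followed by Lemma~\ref{lemma 1} lands exactly on the alphabet $B'$. However, your write-up contains two genuine gaps.

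\textbf{First gap: the claim $C = B\setminus B'$ is false.} You assert that ``$g(f^{pn}(b))\neq\varepsilon$ for some $n$'' is equivalent to ``$g(f^{pn}(b))\neq\varepsilon$ for all large $n$'', citing Lemma~\ref{lemma:recurrent-argument}. But that lemma only gives the implication for $n$ exceeding a threshold (roughly $\#B$); small values of $n$ are not covered. Concretely, a letter $b$ that is mortal for $f^p$ but has $g(b)\neq\varepsilon$ satisfies $g(f^{p\cdot 0}(b))\neq\varepsilon$ yet $g(f^{pn}(b))=\varepsilon$ for all large $n$: it lies in $B\setminus B'$ but not in your $C$. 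So in general $C\subsetneq B\setminus B'$. What is true (and what you need) is that after applying Lemma~\ref{lemma 2} with your $C$ and then Lemma~\ref{lemma 1}, the surviving alphabet is exactly $B'$; this requires the separate argument that a letter $b\in B\setminus C$ is immortal for $(f^p)_\varepsilon$ if and only if $b\in B'$, which can be done via Lemma~\ref{lemma:recurrent-argument} but is not the monotonicity statement you wrote.

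\textbf{Second gap: $g''$ need not be non-erasing.} You set $g'' = g_\varepsilon\circ (f^p)_\varepsilon^{k}$ with $k$ the number of mortal letters of $(f^p)_\varepsilon$, and claim $g''(b)\neq\varepsilon$ for every immortal $b$. But $g''(b)=g(f^{pk}(b))$, and $b\in B'$ only guarantees $g(f^{pn}(b))\neq\varepsilon$ for all \emph{sufficiently large} $n$; there is no reason $k$ is large enough. (Take $b$ with $g(b)=\varepsilon$ and $f^p(b)=bc$, $g(c)\neq\varepsilon$: then $b\in B'$, there may be no mortal letters at all so $k=0$, and $g''(b)=g(b)=\varepsilon$.) You flag this yourself as ``the main obstacle'', but you do not resolve it. The paper handles this by proving (its claim~\eqref{eq:strong}) that for each $b\in B'$ there is an explicit $N_b$ with $g(f^{pn}(b))\neq\varepsilon$ for all $n\ge N_b$ --- using primitivity of the diagonal blocks --- and then setting $g' = g_\ell\circ f_\ell^{N}$ with $N=\max_b N_b$. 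You need the analogous step: replace $k$ by a sufficiently large $N$ and justify it.
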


\begin{proof}
By definition of $p$, the matrix $\mathsf{Mat}_{f^p}$ is equal (up to a permutation matrix) 
to a lower block triangular matrix whose diagonal blocks are either primitive or $(0)$.
We just need to iterate the previous two lemmas to get the morphisms $f'$ and $g'$.

First, Lemma~\ref{lemma 1} applied to $f_0:=f^p$ and $g_0:=g$ 
provides a morphism $g_1$ and a non-erasing morphism $f_1$ 
defined over a sub-alphabet $B_1 \subseteq B$ 
such that $g(f^\omega(a))  = g_1(f_1^\omega(a))$ 
and we have $\mathsf{Mat}_{f_1} = (\mathsf{Mat}_{f^p})_{B_1}$.
Indeed, with the notation of Lemma~\ref{lemma 1}, we have 
$g_1= g \circ f^{pk}$, 
$f_1 =  \restriction{(\kappa_{B,B_\mathcal{M}} \circ f^p)}{B_{\mathcal{I}}^*}$ 
and $B_1=B_{\mathcal{I}}$. 
Moreover, $B'\subseteq B_1$ by construction and $\restriction{g_1\circ f_1^n=g\circ f^{p(n+k)}}{B_1^*}$. 
Therefore $B'=\{b\in B_1\,\mid\, g_1(f_1^n(b)) \neq \varepsilon \ \text{for all large enough } n\}$. 
Since $(f^p)_{B\setminus B_1} = (f^p)_{B_\mathcal{M}}$ is a sub-morphism of $f^p$, 
$\mathsf{Mat}_{f_1}$ is a lower block triangular matrix 
whose diagonal blocks are some of the diagonal blocks of $\mathsf{Mat}_{f^p}$ 
(see Remark~\ref{rem:submorphism-matrix}).

We apply Lemma~\ref{lemma 2} to $f_1,g_1$ 
and the largest sub-alphabet $C$ of $B_1\cap g_1^{-1}(\varepsilon)$ 
such that $(f_1)_C$ is a sub-morphism of $f_1$. 
We obtain new morphisms $g_2$ and $f_2$ defined over a sub-alphabet $B_2\subseteq B_1$ 
such that $g(f^\omega(a)) = g_2(f_2^\omega(a))$. 
Moreover, $B'\subseteq B_2$ by construction 
and $g_2\circ f_2^n=\restriction{(g_1\circ f_1^n)}{B_2^*}=\restriction{(g\circ f^{p(n+k)})}{B_2^*}$. 
Therefore $B'=\{b\in B_2\,\mid\, g_2(f_2^n(b)) \neq \varepsilon \ \text{for all large enough } n\}$. 
Further, $\mathsf{Mat}_{f_2}= (\mathsf{Mat}_{f_1})_{B_2} = (\mathsf{Mat}_{f^p})_{B_2}$.
Again, observe that $\mathsf{Mat}_{f_2}$ is a lower block triangular matrix 
whose diagonal blocks are some of the diagonal blocks of $\mathsf{Mat}_{f^p}$.

Observe that the new morphism $f_2$ might be erasing:
This is the case when a letter $b \in B_1$ is not erased by $g_1$, but is such that $g_1(f_1(b))=\varepsilon$ 
(such a letter is called {\em moribund} in \cite[Definition 7.7.2]{Allouche&Shallit:2003}).
This is why we need to iterate the process: we  iteratively apply
Lemma~\ref{lemma 1} followed by Lemma~\ref{lemma 2} 
(applied to the largest possible sub-alphabet)
until $f_\ell=f_{\ell+2}$ for some even $\ell$.
This always happens since the two applied lemmas remove letters from a finite alphabet.

The obtained morphism $f_\ell$ is necessarily non-erasing because 
when the stabilization occurs the application of Lemma~\ref{lemma 1} provides no new morphism, 
which means that $B_\ell$ contains no mortal letter with respect to $f_\ell$.
Moreover, the application 
of Lemma~\ref{lemma 2} provides no new morphism either, 
so there is no non-empty sub-alphabet $C$ of $B_\ell\cap g_\ell^{-1}(\varepsilon)$ such that $f_\ell(C)\subseteq C^*$. 
Since $f_\ell$ is non-erasing, this implies: 
\begin{equation} \label{eq:tostrength}
  \text{For all letters }b\in B_\ell,\   g_\ell(f_\ell^n(b)) \neq \varepsilon \text{ for infinitely many }n.
\end{equation}

We now have $g(f^\omega(a))=g_\ell(f_\ell^\omega(a))$ 
where $f_\ell\colon B_\ell^* \to B_\ell^*$ is a non-erasing morphism, $g_\ell \colon B_\ell^* \to A^*$ 
 and $B'=\{b\in B_\ell\,\mid\, g_\ell(f_\ell^n(b)) \neq \varepsilon \ \text{for all large enough } n\}$. 
More precisely, we have
\begin{equation}\label{eq-prop}
	g_\ell\circ f_\ell^n=\restriction{g\circ f^{p(n+k_1+k_3+\cdot+k_{\ell-1})}}{B_\ell^*}
\end{equation}
where $k_i$ is the number of mortal letters in $B_i$ with respect to $f_i$,
and
\[
	\mathsf{Mat}_{f_\ell}=(\mathsf{Mat}_{f^p})_{B_\ell}
\]
where the diagonal square blocks of $\mathsf{Mat}_{f_\ell}$ are all primitive or $(0)$.
We take $f'=f_\ell$. 
What remains to show is that $B'=B_\ell$ and that there exists a power $f_\ell^N$ of $f_\ell$ such that 
the morphism $g':=g_\ell\circ f_\ell^N$ is non-erasing.

We claim that we can strengthen \eqref{eq:tostrength} as follows: 
\begin{equation}\label{eq:strong}
	\text{For all letters }b \in B_\ell, \text{ there exists } N_b\in\N
	\text{ such that for all } n \geq N_b,\  g_\ell(f_\ell^n(b)) \neq \varepsilon.
\end{equation}
Together with \eqref{eq-prop} this implies that $B'=B_\ell$, 
whence the choice $N = \max\{N_b \mid b \in B_\ell\}$ is suitable for the definition of $g'$.

Let us prove~\eqref{eq:strong}. 
Wet let $P_1, \dots, P_t \subseteq B_\ell$ denote the sub-alphabets such that, 
for all $i \in \{1,\dots,t\}$, $(f_\ell)_{P_i}$ is a primitive sub-morphism of $f_\ell$ 
(such sub-alphabets exist, for otherwise the word $g_\ell(f_\ell^\omega(a))$ would be finite).
For every $i \in \{1,\dots,t\}$, there is a letter $c \in P_i$ such that $g_\ell(c) \neq \varepsilon$, 
for otherwise this would contradict the definition of $\ell$.
Recall that a non-negative square matrix $M$ of size $m$ is primitive 
if and only if there is an integer $k \leq m^2 - 2m +2$ such that $M^k>0$
(see, for instance, \cite[Corollary 8.5.8]{Horn&Johnson:2013}).  
Thus there is an integer $k \leq (\#B_\ell)^2 - 2 \#B_\ell+2$ such that for every $i \in \{1,\dots,t\}$,
every letter $c \in P_i$ and all integers $n\ge k$, all letters of $P_i$ occur in $f_\ell^n(c)$.
Now, as $f_\ell$ is non-erasing, for every letter $b \in B_\ell$, 
there is a non-negative integer $n_b \leq \#B_\ell$ 
such that $f_\ell^{n_b}(b)$ contains an occurrence of a letter in $\bigcup_{1 \leq i \leq t} P_i$.
Finally we can take $N_b = n_b + k$.

To conclude with the proof, we note that $f'$ is the morphism
\begin{eqnarray*}
	f_\ell &=& \restriction{(\kappa_{B_{\ell-1},B_{\ell-1}\setminus B_\ell}
							\circ \cdots 
							\circ \kappa_{B_1,B_1\setminus B_2}
							\circ \kappa_{B,B\setminus B_1}
							\circ f^p)}
					{B_\ell^*}\\
		&=& \restriction{(\kappa_{B,B\setminus B_\ell} \circ f^p)}{B_\ell^*}\\
		&=& \restriction{(\kappa_{B,B\setminus B'} \circ f^p)}{B'^*}.
\end{eqnarray*}
\end{proof}

  \begin{remark}
      In the proof of the previous result, we apply iteratively first Lemma~\ref{lemma 1} 
	and next Lemma~\ref{lemma 2}. 
	Note that we would get exactly the same result by first applying Lemma~\ref{lemma 2} 
	and then Lemma~\ref{lemma 1}.
  \end{remark}

Let us provide an algorithm that allows to get rid of the effacement.
The correctness of this algorithm is ensured by the previous proposition.

\begin{algo}\label{algo1} 
The input is two morphisms $f\colon B^* \to B^*$ and $g\colon B^* \to A^*$ 
such that $f$ is prolongable on $a \in B$.
The output is two non-erasing morphisms $f'\colon B'^* \to B^*$ and $g'\colon B'^* \to A^*$  
defined over a sub-alphabet $B'$ of $B$ containing $a$
such that $g'(f'^\omega(a)) = g(f^\omega(a))$. 

\begin{enumerate}[(i)]
	\item Define $p = \mathsf{p}(\mathsf{Mat}_f)$ as in Definition~\ref{def:power} and replace $f$ with $f^p$.
	
	\item Define $B_\mathcal{M} = \{b \in B \mid f^{\#B}(b) = \varepsilon\}$, 
		$k = \#B_\mathcal{M}$ and $C$ as the largest subset of $B \cap (g \circ f^k)^{-1}(\varepsilon)$ 
		such that $f(C) \subseteq C^*$.
		Replace $B$ with $B \setminus C$, $f$ with $\restriction{(\kappa_{B,C} \circ f)}{(B \setminus C)^*}$ and $g$ 
		with $\restriction{(g \circ f^k)}{(B \setminus C)^*}$.

	\item Repeat (ii) until $C$ is the empty set. 
		Then set $f'=f$ and $B' = B$.

	\item Define $N \leq (\#B')^2 - \#B' +2$ as the least integer 
		such that $g(f'^N(b)) \neq \varepsilon$ for all $b \in B'$.
		Then set $g' = g \circ f'^N$. 
\end{enumerate}
\end{algo}

\begin{corollary}
	Let $f,g,A,B,a,f',g',B'$ and $p$ be as in Proposition~\ref{prop:f-g-non-effacants}.
	Then $f^p_{B\setminus B'}$ is a sub-morphism of $f^p$.
	Moreover, if $f$ has growth type $(\lambda,d)$ with respect to~$a$, 	
	then exactly one of the following situations occurs:
	\begin{enumerate}
	\item $\lambda^p \notin \spec(f^p_{B \setminus B'})$ and $f'$ has growth type $(\lambda^p,d)$ w.r.t.~$a$;
	\item $\lambda^p \in \spec(f^p_{B \setminus B'})$ and 
		$\lambda \notin \spec(f')$ 
		and there exist $\lambda' \in \spec(f)$ and $d' \in \N$ 
		such that $\lambda'<\lambda$ and $f'$ has growth type $(\lambda'^p,d')$ w.r.t.~$a$;
	\item $\lambda^p \in \spec(f^p_{B \setminus B'})$ 
		and $\lambda \in \spec(f')$
		and $f'$ has growth type $(\lambda^p,d')$ w.r.t.~$a$ for some $d' \leq d$.
	\end{enumerate}
\end{corollary}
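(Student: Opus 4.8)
The plan is to prove the structural claim, then to reduce to the case where every letter of $B$ occurs in $f^\omega(a)$, and finally to run a three-way case split governed by the location of the number $\lambda^p$. First I would show $f^p(B\setminus B')\subseteq(B\setminus B')^*$: if a letter $b$ occurring in $f^p(c)$ with $c\in B\setminus B'$ belonged to $B'$, then $g(f^{pn}(b))\neq\varepsilon$ for all large $n$, and since $f^{pn}(b)$ is a factor of $f^{p(n+1)}(c)=f^{pn}(f^p(c))$ this would force $g(f^{pm}(c))\neq\varepsilon$ for all large $m$, contradicting $c\notin B'$. Hence $f^p_{B\setminus B'}$ is a sub-morphism of $f^p$. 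Listing the letters of $B'$ before those of $B\setminus B'$, the matrix $\mathsf{Mat}_{f^p}$ acquires the block form $\left(\begin{smallmatrix}\mathsf{Mat}_{f'}&0\\ R&\mathsf{Mat}_{f^p_{B\setminus B'}}\end{smallmatrix}\right)$ with $\mathsf{Mat}_{f'}=(\mathsf{Mat}_{f^p})_{B'}$; in particular $\spec(f^p)=\spec(f')\sqcup\spec(f^p_{B\setminus B'})$, $\rho(\mathsf{Mat}_{f^p})=\max\{\rho(\mathsf{Mat}_{f'}),\rho(\mathsf{Mat}_{f^p_{B\setminus B'}})\}$, and in the digraph of $\mathsf{Mat}_{f^p}$ every walk issued from a vertex of $B'$ remains in $B'$.

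\emph{Reduction.} Letters of $B$ not occurring in $f^\omega(a)$ affect neither $g(f^\omega(a))$ nor the growth type at $a$ of $f$ or of $f'$, so I would assume that every letter of $B$ occurs in $f^\omega(a)$. Then Proposition~\ref{prop:lambda-a} gives $\lambda=\rho(\mathsf{Mat}_f)$, hence $\rho(\mathsf{Mat}_{f^p})=\lambda^p$; and a straightforward induction (as in the proof of Lemma~\ref{lemma 2}, using that $f^p_{B\setminus B'}$ is a sub-morphism) yields $f'^n(a)=\kappa_{B,B\setminus B'}(f^{pn}(a))$, so every letter of $B'$ occurs in $f'^\omega(a)$ and Proposition~\ref{prop:lambda-a} also gives $\mu:=\lambda(f',a)=\rho(\mathsf{Mat}_{f'})$. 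Substituting into the previous display, $\lambda^p=\max\{\mu,\rho(\mathsf{Mat}_{f^p_{B\setminus B'}})\}$, so $\mu\le\lambda^p$, with equality unless $\rho(\mathsf{Mat}_{f^p_{B\setminus B'}})=\lambda^p$. Finally, since $f'$ is non-erasing, $|f'^n(a)|\le|f^{pn}(a)|$, and comparing $|f'^n(a)|=\Theta(n^{d(f',a)}\mu^n)$ with $|f^{pn}(a)|=\Theta(n^d(\lambda^p)^n)$ gives $d(f',a)\le d$ whenever $\mu=\lambda^p$.

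\emph{The case split.} Since $\mu=\rho(\mathsf{Mat}_{f'})$, the dichotomy $\mu<\lambda^p$ versus $\mu=\lambda^p$ is precisely the dichotomy $\lambda\notin\spec(f')$ versus $\lambda\in\spec(f')$ of the statement. If $\mu<\lambda^p$, then $\rho(\mathsf{Mat}_{f^p_{B\setminus B'}})=\lambda^p\in\spec(f^p_{B\setminus B'})$, and, taking $\lambda'$ to be the largest spectral radius of a strongly connected component of $\mathsf{Mat}_f$ contained in $B'$ (so that $\lambda'\in\spec(f)$ and $\lambda'^p=\rho(\mathsf{Mat}_{f'})=\mu$, hence $\lambda'<\lambda$), the morphism $f'$ has growth type $(\lambda'^p,d(f',a))$: this is alternative~(2). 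If $\mu=\lambda^p$ and, in addition, $\lambda^p\in\spec(f^p_{B\setminus B'})$, then $\lambda(f',a)=\lambda^p$ and $d(f',a)\le d$: this is alternative~(3). There remains the case $\mu=\lambda^p$ and $\lambda^p\notin\spec(f^p_{B\setminus B'})$, alternative~(1), in which I still have to upgrade $d(f',a)\le d$ to $d(f',a)=d$.

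\emph{The degree equality in alternative~(1), and the main obstacle.} Here I would use the path-graph reading underlying Lemma~\ref{lemma:upper-triangular-primitive} and Remark~\ref{rem:interpretation-lambda}: $d$ (resp.\ $d(f',a)$) is one less than the maximal number of diagonal strongly connected blocks of spectral radius $\lambda^p$ met along a chain of components ending at the component of $a$ in $\mathsf{Mat}_{f^p}$ (resp.\ in $\mathsf{Mat}_{f'}$). Since $\lambda^p\notin\spec(f^p_{B\setminus B'})$, no component contained in $B\setminus B'$ has spectral radius $\lambda^p$, so every dominating block lies in $B'$; as there is no edge from $B'$ to $B\setminus B'$, the part of an optimal chain after its last entry into $B'$ is a chain inside $B'$, ending at the component of $a$, that still meets every dominating block. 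Hence $d(f',a)\ge d$, so $d(f',a)=d$ and $f'$ has growth type $(\lambda^p,d)$. (Alternatively one can bound $E(n):=|f^{pn}(a)|-|f'^n(a)|$, the number of occurrences of letters of $B\setminus B'$ in $f^{pn}(a)$, by splitting each contributing walk into a walk inside $B\setminus B'$, one crossing edge, and a walk inside $B'$ reaching $a$; as $\rho(\mathsf{Mat}_{f^p_{B\setminus B'}})<\lambda^p$, the resulting series converges and $E(n)=O(n^{d(f',a)}(\lambda^p)^n)$, which with $|f^{pn}(a)|=\Theta(n^d(\lambda^p)^n)$ forces $d\le d(f',a)$.) I expect this degree identity, together with a careful verification that the reduction to ``every letter occurs in $f^\omega(a)$'' leaves $p=\mathsf{p}(\mathsf{Mat}_f)$ and the alphabet $B'$ unchanged, to be the only delicate points; the rest is the block-triangular bookkeeping recorded in Section~\ref{sec:2}.
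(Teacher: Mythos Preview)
Your proof is correct and follows the same block-triangular strategy as the paper: show $f^p_{B\setminus B'}$ is a sub-morphism, put $\mathsf{Mat}_{f^p}$ into the form $\left(\begin{smallmatrix}\mathsf{Mat}_{f'}&0\\ \star&\mathsf{Mat}_{f^p_{B\setminus B'}}\end{smallmatrix}\right)$, and read off the growth type via Lemma~\ref{lemma:upper-triangular-primitive}. The paper's own proof is far terser---after the block decomposition it simply says ``mutatis mutandis, the result then follows from Lemma~\ref{lemma:upper-triangular-primitive}''---whereas you carry out the three-way case split explicitly and supply the path-graph argument for the degree equality $d(f',a)=d$ in case~(1); note also that your reduction to ``every letter of $B$ occurs in $f^\omega(a)$'' is automatic, since the hypothesis that $f$ has a growth type w.r.t.~$a$ already forces this by Definition~\ref{def:pure}.
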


\begin{proof}
The alphabet $B \setminus B'$ being the set of letters $b$ such that 
$g(f^{pn}(b)) = \varepsilon$ for infinitely many $n$, 
we have $f^p(B \setminus B') \subseteq (B \setminus B')^*$.
Thus $f^p_{B\setminus B'}$ is a sub-morphism of $f^p$.
Furthermore, we can suppose that $\mathsf{Mat}_{f^p}$ is of the form
\[
	\mathsf{Mat}_{f^p} = 
	\left(
	\begin{array}{cccc}
	P_1 		& 0 			& \cdots		& 0			\\
	B_{2,1} 	& P_2		& \ddots		& \vdots		\\
	\vdots 	& \ddots		& \ddots 		& 0			\\
	B_{h,1}	& \cdots 		& B_{h,h-1}	& P_h	
	\end{array}
	\right)
\]
where the diagonal square blocks $P_{\ell}$ are either primitive or $(0)$.
From Proposition~\ref{prop:f-g-non-effacants} we know that the morphism $f'$ is the morphism 
$\restriction{(\kappa_{B,B\setminus B'} \circ f^p)}{B'^*}$.
Then by Remark~\ref{rem:submorphism-matrix}, up to a reordering of the letters, we can suppose that 
\[
	\mathsf{Mat}_{f^p} = 
	\left(
	\begin{array}{cc}
		\mathsf{Mat}_{f'}		&	0	\\
		\star				&	\mathsf{Mat}_{f_{B \setminus B'}^p}
	\end{array}
	\right),
\]
and that any primitive block $P_\ell$ is a diagonal block either of $\mathsf{Mat}_{f'}$ 
or of $\mathsf{Mat}_{f_{B \setminus B'}^p}$. 
 Mutatis mutandis, the result then follows from Lemma~\ref{lemma:upper-triangular-primitive}.
\end{proof}

Now we turn to the second part of the algorithm that consists in a technicality 
that ensures that the length of the images $(g \circ f^n)(b)$ is non-decreasing with $n$.
This can be done by considering powers of the morphism $f$.
Note that this is the second time that we replace $f$ with one of its power. 
The first time was in Proposition~\ref{prop:f-g-non-effacants}.

Let us recall the following lemma whose proof can be found in~\cite{Cassaigne&Nicolas:2003}.
Another proof of this result can be found~\cite{Durand:2013} 
where it is shown that $p$ and $q$ can be algorithmically chosen.
We recall the algorithm and prove its correctness in the particular case 
we are dealing with: $f$ and $g$ are non-erasing and the incidence matrix of $f$ 
is a lower block triangular matrix whose diagonal blocks are primitive or (0).

\begin{lemma} \cite[Lemme~4]{Cassaigne&Nicolas:2003} \label{lemme4}
	Let $f\colon B^* \to B^*$ be a morphism prolongable on a letter $a$
	and $g \colon B^* \to A^*$ be a morphism such that $g(f^\omega(a))$ is an infinite word. 
	Then there exist positive integers $p$ and $q$ such that
	\[
		|(g\circ f^p)(f^q(a))| > |(g\circ f^p)(a)|  \quad \text{ and } \quad \forall b \in B,\ |(g\circ f^p)(f^q(b))| \geq |(g\circ f^p)(b)|.
	\]
\end{lemma}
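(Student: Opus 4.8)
The plan is to work with the morphism $h := g\circ f$ and study how the quantities
$|(g\circ f^n)(b)| = |h^{n-1}(g(\,\cdot\,))|$-type expressions evolve; more precisely, for each letter $b\in B$ I would consider the ``image-length'' sequence
\[
    \ell_n(b) := |(g\circ f^n)(b)|, \qquad n\ge 0,
\]
and show that suitable powers make all these sequences simultaneously non-decreasing, while at least one of them (the one starting from $a$) strictly increases. The key observation is that $\ell_n(b)=\sum_{c\in B}|f^n(b)|_c\,|g(c)|$, so $\ell_n(b)$ is obtained from the column of $\mathsf{Mat}_{f^n}=\mathsf{Mat}_f^{\,n}$ indexed by $b$ via the fixed non-negative row vector $L=(|g(c)|)_{c\in B}$; thus $\ell_n(b)= (L\,\mathsf{Mat}_f^{\,n})_b$. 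Since $f$ and $g$ are non-erasing and $\mathsf{Mat}_f$ is a lower block triangular matrix with primitive or $(0)$ diagonal blocks, the entries of $\mathsf{Mat}_f^{\,n}$ are governed, block by block, by Proposition~\ref{prop:lambda-d-ijr} and Lemma~\ref{lemma:upper-triangular-primitive}: along the arithmetic progression $n\equiv r\pmod{p}$ with $p=\p(\mathsf{Mat}_f)$, each entry is either eventually $0$ or of the exact order $\Theta(n^{d}\lambda^{n})$ for some $\lambda\in\multspec(\mathsf{Mat}_f^{\,p})\cap\R_{\ge1}$ and $d\in\N$.

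First I would fix $p=\p(\mathsf{Mat}_f)$ and, up to replacing $f$ by $f^{p}$, assume $\mathsf{Mat}_f$ is already in the lower block triangular normal form with primitive-or-$(0)$ diagonal blocks. For each $b\in B$, set $\lambda_b=\lambda(f,b)$ and $d_b=d(f,b)$ as in Proposition~\ref{prop:somme}(i), so $\ell_n(b)=\Theta(n^{d_b}\lambda_b^{\,n})$; since $f$ is prolongable on $a$, the letter $a$ is immortal and $g(f^\omega(a))$ is infinite, so $\ell_n(a)\to\infty$, i.e.\ either $\lambda_a>1$, or $\lambda_a=1$ and $d_a\ge 1$. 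The crucial finite-combinatorial step is then: because there are finitely many letters, there is an integer $q_0$ such that for every $b$ with $\lambda_b>1$ the ratio $\ell_{n+q_0}(b)/\ell_n(b)$ exceeds $1$ for all large $n$ (indeed it tends to $\lambda_b^{q_0}>1$), and for every $b$ with $\lambda_b=1$, $d_b=0$ one has $\ell_n(b)$ eventually constant (a bounded integer sequence of exact order $\Theta(1)$ is eventually constant), while for $\lambda_b=1$, $d_b\ge 1$ the sequence $\ell_n(b)$ tends to infinity, hence is ``eventually strictly increasing on average''. From this one extracts a single pair $(p,q)$ of shifts — $p$ large enough to absorb all the ``large $n$'' thresholds and $q$ a common multiple of the per-letter shifts — making $|(g\circ f^{p})(f^{q}(b))|\ge |(g\circ f^{p})(b)|$ for every $b$, with strict inequality for $b=a$ since $\ell_n(a)$ is unbounded.

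More concretely, the mechanism I would use to pass from ``eventually non-decreasing along a sub-progression'' to an honest inequality $\ell_{p+q}(b)\ge\ell_p(b)$ for a fixed $p$ is: first choose $q$ to be a multiple of $\p(\mathsf{Mat}_f)$ so that the arithmetic-progression analysis applies cleanly to the pair $(n, n+q)$; then, for each of the finitely many letters $b$, the sequence $n\mapsto \ell_{n+q}(b)-\ell_n(b)$ is, by the $\Theta$-estimates above together with the explicit binomial/exponential expansions appearing in the proof of Lemma~\ref{lemma:upper-triangular-primitive}, non-negative for all $n\ge N_b$ for some threshold $N_b$; finally take $p=\max_b N_b$ (rounded up to a multiple of $\p(\mathsf{Mat}_f)$ if convenient). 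For the strict inequality at $a$: since $\ell_n(a)\to\infty$ and $n\mapsto\ell_{n+q}(a)-\ell_n(a)\ge 0$ for $n\ge p$, if this difference were $0$ at $n=p$ one would need it to stay $0$ along $p, p+q, p+2q,\dots$, forcing $\ell_n(a)$ bounded on that sub-progression, a contradiction; enlarging $q$ to a large enough multiple (or $p$) removes this edge case.

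The main obstacle I anticipate is the polynomial-growth regime $\lambda_b=1$: there the ratio $\ell_{n+q}(b)/\ell_n(b)\to 1$, so non-strict monotonicity is delicate and one must argue at the level of \emph{differences} rather than ratios, invoking the precise leading-term expansions from Lemma~\ref{lemma:upper-triangular-primitive} (the $\binom{n+q-1}{q-1}\beta^n$-type formulas) to see that the top-degree coefficient of $\ell_{n+q}(b)-\ell_n(b)$ is a non-negative multiple of $q$ times a positive constant, hence the difference is eventually $\ge 0$; simultaneously one must make sure the single global pair $(p,q)$ works for \emph{all} letters at once, which is where the finiteness of $B$ and taking suitable maxima/l.c.m.'s of the per-letter data is essential. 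Apart from this, the argument is a routine amalgamation of the Perron-type asymptotics already established in Section~\ref{sec:2} with elementary eventual-monotonicity observations, and I would present it by first disposing of the $\lambda_b>1$ letters (easy, via ratios), then the $\lambda_b=1,\ d_b\ge1$ letters (differences, unbounded), then the $\lambda_b=1,\ d_b=0$ letters (eventually constant), and finally combining the thresholds.
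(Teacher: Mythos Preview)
Your asymptotic approach can be made to work, but it is considerably more laborious than the paper's argument, and one of your justifications is incorrect as stated.

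The paper (which, like you, only treats the special case where $f,g$ are non-erasing and $\mathsf{Mat}_f$ already has primitive-or-$(0)$ diagonal blocks) avoids the asymptotic machinery entirely. Its argument rests on two elementary observations. First, a \emph{non-growing} letter $b$ (one with $(|f^n(b)|)_n$ bounded) satisfies $f^{p_b}(b)=f^{p_b+1}(b)$ for some $p_b<\#B$ in this block form; setting $p=\max_b p_b$ and $g'=g\circ f^p$ then gives $g'(f^q(b))=g'(b)$ for every such $b$ and every $q$. Second, for a \emph{growing} letter $b$ one simply picks $q$ so large that $|f^q(b)|\ge |g'(b)|$; because $g'$ is non-erasing, $|g'(f^q(b))|\ge|f^q(b)|\ge|g'(b)|$. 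No $\Theta$-estimates, no case split on $(\lambda_b,d_b)$, no difference analysis.

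Your route through Proposition~\ref{prop:somme} and Lemma~\ref{lemma:upper-triangular-primitive} does reach the goal, but the $\lambda_b=1,\,d_b=0$ case needs a different justification: the parenthetical ``a bounded integer sequence of exact order $\Theta(1)$ is eventually constant'' is false (consider $1,2,1,2,\ldots$). What actually makes $\ell_n(b)$ eventually constant here is that $f$ is non-erasing and the diagonal blocks are primitive-or-$(0)$: once $|f^n(b)|$ stabilises, every letter $c$ occurring in $f^n(b)$ has $|f(c)|=1$, and a primitive non-negative integer block with all column sums equal to $1$ is the $1\times 1$ block $(1)$, so each such $c$ is eventually sent to a fixed letter and $f^n(b)$ stabilises \emph{as a word}. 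Likewise, your treatment of $\lambda_b=1,\,d_b\ge1$ via ``explicit binomial expansions'' needs the observation that in this regime all reachable diagonal blocks are $(0)$ or $(1)$, so $\ell_n(b)$ is eventually an honest polynomial in $n$ with positive leading coefficient; only then does the difference argument go through. In short, the places where you take shortcuts are exactly where the paper's direct combinatorial argument does the real work, and that argument is both shorter and avoids the eventual-monotonicity subtleties altogether.
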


\begin{algo} \label{algo 2}
The input is two non-erasing morphisms $f\colon B^* \to B^*$ and $g\colon B^* \to A^*$ 
such that $f$ is prolongable on $a$ and $\mathsf{Mat}_f$ is a lower block triangular matrix whose 
diagonal blocks are primitive or (0).
The output is two non-erasing morphisms $f'\colon B^* \to B^*$ and $g'\colon B^* \to A^*$ 
such that $g'(f'^\omega(a)) = g(f^\omega(a))$ and
\begin{equation}    \label{eq:cn}
    |g'(f'(a))| > |g'(a)|  \quad \text{ and } \quad  \forall b \in B,\ |g'(f'(b))| \geq |g'(b)|.
\end{equation}
\begin{enumerate}[(i)]
	\item For all $b \in B$ such that $f^{\#B-1}(b) = f^{\#B}(b)$, 
		define $p_b$ as the least non-negative integer $n$ such that $f^n(b) = f^{n+1}(b)$.

	\item Define $p = \max\{p_b \mid b \in B, f^{\#B-1}(b) = f^{\#B}(b)\}$ and set $g' = g \circ f^p$.

	\item For all $b \in B$ such that $f^{\#B-1}(b) \neq f^{\#B}(b)$, 
		choose $k_b,\ell_b \leq \#B$ such that
		$|f^{k_b}(b)|_c>0$, $|f^{\ell_b}(c)|_c>0$ and $|f(c)| \geq 2$ for some letter $c \in B$.

	\item Define $q = \max \{k_b + \ell_b(|g'(b)|-1) \mid b \in B, f^{\#B-2}(b) \neq f^{\#B-1}(b)\}$ and set $f' = f^q$.
\end{enumerate}
\end{algo}

\begin{proof}[Correctness of Algorithm~\ref{algo 2}]
For all $q \in\mathbb{Z}_{\ge 1}$, $a$ is a proper prefix of $f^q(a)$. 
As for all $p \in \N$, $g \circ f^p$ is non-erasing, 
the condition $|(g \circ f^p)(f^q)(a)| > |(g \circ f^p)(a)|$ is always satisfied.
Let us now concentrate on the other condition.

Using that the diagonal blocks of $\mathsf{Mat}_f$ are primitive or (0), 
we can easily show that a letter $b \in B$ is {\em non-growing}, i.e., is such that $(|f^n(b)|)_{n\in\N}$ is bounded, 
if and only if there exists a positive integer $p_b \leq \#B-1$ such that $f^{p_b}(b) = f^{p_b+1}(b)$.
Furthermore, in such a case $f^n(b) = f^{n+1}(b)$ for all integers $n \geq p_b$. 
Thus, for $p$ and $g'$ as defined in the algorithm, 
we have $g'(f^n(b)) = g'(b)$ for all non-growing letters $b$ and all $n\in\N$.

Now, if a letter $b$ is {\em growing}, i.e., is such that $(|f^n(b)|)_{n\in\N}$ 
is unbounded, then there exists $c \in B$ such that $|f(c)| \geq 2$,  $|f^{\ell_b}(c)|_c>0$ 
and $|f^{k_b}(b)|_c>0$ for some $k_b,\ell_b\leq \# B-1$.
Thus, for all $n\in\mathbb{Z}_{\ge 1}$, $|f^{k_b + n \ell_n}(b)| \geq n+1$.
Define 
\[
	q = \max\{k_b + \ell_b( |g'(b)| -1) \mid b \in B, f^{\#B-1}(b) \neq f^{\#B}(b) \}
	\quad \text{and} \quad 
	f' = f^q.
\]
The sequence $(|f^n(b)|)_{n\ge 0}$ being non-decreasing, 
we get that for every growing letter $b$, $|g'(f'(b))| \geq |g'(b)|$.
We of course still have $g'(f'(b))= g'(b)$ for every non-growing letter, 
hence the result for all letters in $B$.
\end{proof}

We finally consider the last part of the algorithm.
The correctness of this algorithm is provided by Proposition~\ref{prop:algo}. 

\begin{algo}\label{algo:cn} 
The input is two non-erasing morphisms $f \colon B^* \to B^*$ 
and $g \colon B^* \to A^*$ satisfying~\eqref{eq:cn}. 
The output is two morphisms $\sigma\colon \Pi^*\to \Pi^*$ 
and $\tau\colon\Pi^*\to A^*$ defined over a new alphabet $\Pi$ 
and a letter $b\in\Pi$ such that $\sigma$ is a non-erasing morphism prolongable on $b$, 
$\tau$ is a coding  and $\tau(\sigma^\omega(b)) = g(f^\omega(a))$. 

\begin{enumerate}[(i)]
\item
Define the alphabet 
\[
	\Pi = \{(b,i) \mid b \in B, 0\leq i < |g(b)|\},
\]
the morphism
\[
	\alpha \colon B^* \to \Pi^*,\, b\mapsto (b,0)(b,1) \cdots (b,|g(b)|-1)
\]
and the coding 
\[
	\tau \colon \Pi^* \to A^*,(b,i)\mapsto (g(b))_i.
\]

\item
For any letter $b \in B$, pick a factorization $(w_{b,i})_{0 \leq i < |g(b)|}$ such that
\begin{equation}   \label{eq:alphaf}
 	\alpha(f(b)) = w_{b,0}\, w_{b,1}\,  \cdots\,  w_{b,|g(b)|-1}   
\end{equation}
with $w_{b,i} \in \Pi^+$ for all $i$ and $|w_{a,0}| \geq 2$.

\item
Define the morphism
\[
	\sigma \colon \Pi^* \to \Pi^*,(b,i)\mapsto w_{b,i}.
\]
\end{enumerate}
\end{algo}

\begin{prop} \label{prop:algo}
	Let $f\colon B^* \to B^*$ be a non-erasing morphism prolongable on a letter $a$ of growth type $(\lambda,d)$ w.r.t a,
	and $g \colon B^* \to A^*$ be a non-erasing morphism such that $g(f^\omega(a))$ is an infinite word. 
	Suppose moreover that $f$ and $g$ satisfy~\eqref{eq:cn}. 
	Then the morphisms $\tau$ and $\sigma$ built in Algorithm~\ref{algo:cn} are such that 
	$g(f^\omega(a)) = \tau(\sigma^\omega((a,0)))$, $\sigma$ is non-erasing and $\tau$ is a coding. 
	Moreover, $\mathsf{Mat}_\sigma$ is a dilated matrix of $\mathsf{Mat}_f$ 
	and $\sigma$ has growth type $(\lambda,d)$ w.r.t. $(a,0)$.
\end{prop}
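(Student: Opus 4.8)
The plan is to verify the four claims of the proposition in sequence: that $\tau(\sigma^\omega((a,0))) = g(f^\omega(a))$, that $\sigma$ is non-erasing, that $\tau$ is a coding, and finally — the substantive part — that $\mathsf{Mat}_\sigma \in \dil(\mathsf{Mat}_f)$ and that the growth type of $\sigma$ w.r.t.\ $(a,0)$ equals that of $f$ w.r.t.\ $a$.

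First I would establish the semiconjugacy $\tau \circ \alpha = g$ directly from the definitions: for $b \in B$, $\alpha(b) = (b,0)\cdots(b,|g(b)|-1)$, so $\tau(\alpha(b)) = (g(b))_0 \cdots (g(b))_{|g(b)|-1} = g(b)$. Next, the key intertwining identity $\sigma \circ \alpha = \alpha \circ f$: applied to a letter $b$, the left side is $\sigma((b,0)\cdots(b,|g(b)|-1)) = w_{b,0}\cdots w_{b,|g(b)|-1}$, which by the chosen factorization~\eqref{eq:alphaf} equals $\alpha(f(b))$. Since $f$ is prolongable on $a$ with $f(a) = au$, the condition $|w_{a,0}| \geq 2$ together with $w_{a,0}$ being a prefix of $\alpha(f(a)) = \alpha(a)\alpha(u) = (a,0)(a,1)\cdots$ forces $w_{a,0}$ to begin with $(a,0)$, so $\sigma$ is prolongable on $(a,0)$. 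Iterating $\sigma \circ \alpha = \alpha \circ f$ gives $\sigma^n(\alpha(a)) = \alpha(f^n(a))$, hence $\sigma^n((a,0))$ is a prefix of $\alpha(f^n(a))$ of length tending to infinity; passing to the limit, $\sigma^\omega((a,0)) = \alpha(f^\omega(a))$, and applying $\tau$ yields $\tau(\sigma^\omega((a,0))) = g(f^\omega(a))$. That $\sigma$ is non-erasing is immediate since each $w_{b,i} \in \Pi^+$, and $\tau$ is a coding by construction.

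The dilatation claim is where I expect the real work. I would show that $\mathsf{Mat}_\sigma$ is a dilated matrix of $\mathsf{Mat}_f$ with dilatation vector $(k_b)_{b \in B}$ where $k_b = |g(b)|$, using the block indexing of Definition~\ref{def:dil} with rows and columns indexed by pairs $(b,i)$. Concretely, the block $B_{b,c}$ of $\mathsf{Mat}_\sigma$ has rows indexed by $i \in \{0,\dots,|g(c)|-1\}$ — wait, one must be careful with the row/column convention: in $\mathsf{Mat}_\sigma$, the entry $(\mathsf{Mat}_\sigma)_{(c,j),(b,i)} = |w_{b,i}|_{(c,j)} = |\sigma((b,i))|_{(c,j)}$. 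Summing over $j \in \{0,\dots,|g(c)|-1\}$ gives $\sum_j |w_{b,i}|_{(c,j)}$, the total number of letters of the form $(c,\cdot)$ occurring in $w_{b,i}$. Now summing the factorization identity $w_{b,0}\cdots w_{b,|g(b)|-1} = \alpha(f(b))$ and counting $(c,\cdot)$-letters: $\alpha(f(b))$ contains, for each occurrence of $c$ in $f(b)$, exactly one block $\alpha(c) = (c,0)\cdots(c,|g(c)|-1)$, hence exactly one occurrence of each $(c,j)$. So $\sum_{j}\sum_{i}|w_{b,i}|_{(c,j)} = |f(b)|_c \cdot |g(c)|$, but I need the per-$i$ statement $\sum_j |w_{b,i}|_{(c,j)} = (\mathsf{Mat}_f)_{c,b}$ for every $i$, which requires that each $w_{b,i}$, when one counts $(c,\cdot)$-symbols, picks up exactly $|f(b)|_c$ of them — this is not automatic from an arbitrary factorization. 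The resolution is that the factorization in step~(ii) is not arbitrary: it must respect the block structure of $\alpha(f(b))$, i.e.\ each $w_{b,i}$ is a concatenation of whole $\alpha(c)$-blocks (the factorization $\alpha(f(b)) = w_{b,0}\cdots$ is chosen to cut $\alpha(f(b))$ only at block boundaries, which is possible since $|g(b)| \leq |\alpha(f(b))|$ and there are $|f(b)| \geq 1$ blocks, each of size $\geq 1$; the constraint $|w_{a,0}| \geq 2$ is compatible because $|\alpha(f(a))| \geq |\alpha(a)| + |\alpha(u)| \geq 1 + 1 = 2$ and in fact $f(a) = au$ gives at least two blocks). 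Granting that each $w_{b,i}$ is a product of whole blocks $\alpha(c)$, the number of $(c,\cdot)$-symbols in $w_{b,i}$ equals the number of $\alpha(c)$-blocks in $w_{b,i}$, and summing over $i$ recovers $|f(b)|_c$ — so one uses instead that the \emph{number of blocks} in $w_{b,i}$ is independent of which block-boundary cut was made only after summing; more precisely, let $n_{b,i,c}$ be the number of $\alpha(c)$-blocks inside $w_{b,i}$, then $\sum_j |w_{b,i}|_{(c,j)} = n_{b,i,c}$ and $\sum_i n_{b,i,c} = |f(b)|_c = (\mathsf{Mat}_f)_{c,b}$, which is exactly~\eqref{eq: property row sum}. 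Hence $\mathsf{Mat}_\sigma \in \dil(\mathsf{Mat}_f)$ with dilatation vector $(|g(b)|)_{b\in B}$.

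Finally, the growth type. By Proposition~\ref{pro:dil}, $\rho(\mathsf{Mat}_\sigma) = \rho(\mathsf{Mat}_f)$, and by Lemma~\ref{lemma: spectrum dilated}, $\multspec(f) \subseteq \multspec(\sigma)$. Since all letters of $B$ occur in $f^\omega(a)$ (this holds because $f$ has a well-defined growth type w.r.t.\ $a$, per Definition~\ref{def:pure}) and $\alpha$ maps each letter $b$ to a word containing all $(b,i)$, all letters of $\Pi$ occur in $\sigma^\omega((a,0)) = \alpha(f^\omega(a))$; so by Proposition~\ref{prop:lambda-a}, $\lambda(\sigma,(a,0)) = \rho(\mathsf{Mat}_\sigma) = \rho(\mathsf{Mat}_f) = \lambda(f,a) = \lambda$. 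For the degree, I would use that $|\sigma^n((a,0))| \leq |\alpha(f^n(a))| \leq L\,|f^n(a)|$ where $L = \max_b |g(b)|$, and $|\sigma^n((a,0))| \geq |f^n(a)|$ since $\sigma^n((a,0))$ has at least as many letters as $f^n(a)$ has (each letter $c$ of $f^n(a)$ contributes the block $\alpha(c)$ of length $\geq 1$); thus $|\sigma^n((a,0))| = \Theta(|f^n(a)|) = \Theta(n^d\lambda^n)$, and since $|\sigma^n((a,0))| = \Theta(n^{d(\sigma,(a,0))}\lambda^{n})$ with the same $\lambda$, we conclude $d(\sigma,(a,0)) = d$. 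Therefore $\sigma$ has growth type $(\lambda,d)$ w.r.t.\ $(a,0)$.
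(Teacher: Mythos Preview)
Your argument has two genuine gaps.

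\textbf{The dilatation check.} You sum over the wrong index. Property~\eqref{eq: property row sum} fixes a row $(c,j)$ and a column-block $b$ and sums over the column's second component~$i$:
\[
\sum_{i=0}^{|g(b)|-1}(\mathsf{Mat}_\sigma)_{(c,j),(b,i)}
=\sum_{i}|w_{b,i}|_{(c,j)}
=|\alpha(f(b))|_{(c,j)}
=|f(b)|_c
=(\mathsf{Mat}_f)_{c,b},
\]
the penultimate equality holding because each occurrence of $c$ in $f(b)$ contributes exactly one copy of $(c,j)$ to $\alpha(f(b))$. This is the paper's computation, and it works for \emph{any} factorization of $\alpha(f(b))$ into $|g(b)|$ nonempty pieces. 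You instead fix $(b,i)$ and sum over~$j$, which counts $(c,\cdot)$-symbols in a single piece $w_{b,i}$ and does not directly give the required identity. To salvage this you impose that the factorization cut only at block boundaries, but that hypothesis is (a) not part of Algorithm~\ref{algo:cn}, and (b) not always achievable: a whole-block factorization into $|g(b)|$ nonempty pieces needs $|f(b)|\ge|g(b)|$, whereas \eqref{eq:cn} only guarantees $|\alpha(f(b))|=|g(f(b))|\ge|g(b)|$. For instance $f(a)=ab$, $f(b)=b$, $g(a)=000$, $g(b)=11$ satisfies~\eqref{eq:cn} yet $|f(a)|=2<3=|g(a)|$. (Your formula $\sum_j|w_{b,i}|_{(c,j)}=n_{b,i,c}$ is also off by a factor $|g(c)|$ even under the block hypothesis.)

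\textbf{The growth-type lower bound.} The inequality $|\sigma^n((a,0))|\ge|f^n(a)|$ is false. Your justification proves $|\alpha(f^n(a))|\ge|f^n(a)|$, but $\sigma^n((a,0))$ is only a \emph{prefix} of $\alpha(f^n(a))=\sigma^n(\alpha(a))$ when $|g(a)|>1$. Take $f(a)=abb$, $f(b)=b$, $g(a)=00$, $g(b)=1$ and the factorization $w_{a,0}=(a,0)(a,1)$, $w_{a,1}=(b,0)(b,0)$, $w_{b,0}=(b,0)$: then $|\sigma^n((a,0))|=2n<2n+1=|f^n(a)|$ for every $n\ge1$. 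The paper's argument avoids this: it uses $|\sigma^n(\alpha(a))|=|\alpha(f^n(a))|=|g(f^n(a))|=\Theta(n^d\lambda^n)$ (here the non-erasing hypothesis on $g$ is what matters), and then sandwiches via prefix relations in the common fixed point---there exist $k_1,k_2$ with $\sigma(\alpha(a))$ a prefix of $\sigma^{k_1}((a,0))$ and $\sigma^{k_1}((a,0))$ a prefix of $\sigma^{k_2}(\alpha(a))$, so $|\sigma^{n+1}(\alpha(a))|\le|\sigma^{n+k_1}((a,0))|\le|\sigma^{n+k_2}(\alpha(a))|$.

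Your treatment of $\tau\circ\alpha=g$, $\sigma\circ\alpha=\alpha\circ f$, prolongability, and the identification of the fixed point is fine, and your route to $\lambda(\sigma,(a,0))=\lambda$ via Proposition~\ref{pro:dil} and Proposition~\ref{prop:lambda-a} is a legitimate alternative to the paper's direct asymptotic argument; only the degree~$d$ then still needs the corrected sandwich.
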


\begin{proof}
Since $g$ is non-erasing, the alphabet $\Pi$ the morphism $\alpha$ and the coding $\tau$ are well defined.
Then, the existence of the factorization $(w_{b,i})_{0 \leq i < |g(b)|}$ 
with $w_{b,i} \in \Pi^+$ for all $i$ and $|w_{a,0}| \geq 2$ is ensured by~\eqref{eq:cn}, 
which makes the morphism $\sigma$ well-defined.

It is clear that $\tau$ is a coding and that $\sigma$ is non-erasing and prolongable 
on the first letter of $w_{a,0}$ which is $(a,0)$. 
By definition $\tau \circ \alpha = g$. 
Let $\bu = f^\omega(a)$ and $\bw =g( f^\omega(a))$. 
Hence $\tau(\alpha(\bu)) = \bw$. 
Let us show that  $\sigma^\omega((a,0)) = \alpha(\bu)$. 
From~\eqref{eq:alphaf} we observe that 
\[
	\alpha \circ f = \sigma \circ \alpha
\] 
which implies that $\alpha(\bu)$ is a fixed point of $\sigma$: 
$\sigma(\alpha(\bu)) = \alpha(f(\bu)) = \alpha(\bu)$.

Let us prove that $\mathsf{Mat}_\sigma \in \dil(\mathsf{Mat}_f)$ with dilatation vector $(|g(b)|)_{b \in B}$.
For all $b_i, b_j \in B$ and for all $k \in \{0,1,\dots,|g(b_i)|-1\}$, we have
\begin{eqnarray*}
	\sum_{\ell=0}^{|g(b_j)|-1} {(\mathsf{Mat}_\sigma)}_{(b_i,k),(b_j,\ell)} 
	&=&	\sum_{\ell=0}^{|g(b_j)|-1} |\sigma((b_j,\ell))|_{(b_i,k)}	\\
	&=&	\sum_{\ell=0}^{|g(b_j)|-1} |w_{b_j,\ell}|_{(b_i,k)}  \\
	&=& |w_{b_j,0}\, w_{b_j,1}\,  \cdots\,  w_{b_j,|g(b_j)|-1}|_{(b_i,k)}\\ 
	&=& |\alpha(f(b_j))|_{(b_i,k)}\\
	&=& (\mathsf{Mat}_f)_{b_i,b_j}.
\end{eqnarray*}
Indeed, if $(\mathsf{Mat}_f)_{b_i,b_j}= x$ for some $b_i,b_j \in B$, 
then the word $(b_i,0)(b_i,1) \cdots (b_i,|g(b_i)|-1)$ 
occurs $x$ times in $\alpha(f(b_j))$. 
Therefore, we also have $|\alpha(f(b_j))|_{(b_i,k)} = x$ for all $k \in \{0,1,\dots,|g(b_i)|-1\}$.

Finally, let us prove that $\sigma$ has growth type $(\lambda,d)$ w.r.t. $(a,0)$.
As $g$ is non-erasing, we have $|g(f^n(a))| = \Theta(\lambda^n n^d)$.
Then, since $|g(b)| = |\alpha(b)|$ for all $b \in B$ and $\alpha \circ f = \sigma \circ \alpha$, 
we get that $|\sigma^n (\alpha(a))| = |\alpha (f^n(a))|=|g (f^n(a))|=\Theta(\lambda^n n^d)$.
Finally, as $\sigma^n((a,0))$ converges to $\alpha(\bu) = \sigma(\alpha(\bu))$ 
when $n$ increases, there are some integers $k_1,k_2 \in \N$ 
such that $\sigma \circ \alpha(a)$ is a prefix of $\sigma^{k_1}((a,0))$, 
itself a prefix of $\sigma^{k_2} ( \alpha(a))$.
Thus, for all $n\in\N$, we have 
$|\sigma^{n+1}(\alpha(a))| \leq |\sigma^{n+k_1}((a,0))| \leq |\sigma^{n+k_2}(\alpha(a))|$, 
meaning that $\sigma$ has growth type $(\lambda,d)$ w.r.t. $(a,0)$. 
\end{proof}

\section{Acknowledgment}
We thank Manon Stipulanti for her feedback when reading a first draft of this paper.

\bibliographystyle{alpha}
\bibliography{biblio}

\begin{thebibliography}{BHMV94}

\bibitem[All87]{Allouche:1987}
J.-P. Allouche.
\newblock Automates finis en th\'eorie des nombres.
\newblock {\em Exposition. Math.}, 5(3):239--266, 1987.

\bibitem[AS03]{Allouche&Shallit:2003}
J.-P. Allouche and J.~Shallit.
\newblock {\em Automatic Sequences, Theory, Applications, Generalizations}.
\newblock Cambridge University Press, Cambridge, 2003.

\bibitem[BHMV94]{BHMV:1994}
V.~Bruyère, G.~Hansel, C.~Michaux, and R.~Villemaire.
\newblock Logic and p-recognizable sets of integers.
\newblock {\em Bull. Belg. Math. Soc}, 1:191--238, 1994.

\bibitem[BR10]{cant:2010}
V.~Berthé and M.~Rigo, editors.
\newblock {\em Combinatorics, automata, and number theory}, volume 135 of {\em
  Encyclopedia of Mathematics and its Applications}.
\newblock Cambridge University Press, 2010.

\bibitem[CK97]{Choffrut&Karhumaki:1997}
C.~Choffrut and J.~{Karhum\"aki}.
\newblock Combinatorics of words.
\newblock In G.~Rozenberg and A.~Salomaa, editors, {\em Handbook of Formal
  Languages}, volume~1, pages 329--438. Springer-Verlag, 1997.

\bibitem[CMN08]{Cassaigne-Mauduit-Nicolas:arxiv}
J.~Cassaigne, C.~Mauduit, and F.~Nicolas.
\newblock Asymptotic behavior of growth functions of d0l-systems.
\newblock {\em CoRR}, abs/0804.1327, 2008.

\bibitem[CN03]{Cassaigne&Nicolas:2003}
J.~Cassaigne and F.~Nicolas.
\newblock Quelques propri\'et\'es des mots substitutifs.
\newblock {\em Bull. Belg. Math. Soc. Simon Stevin}, 10(suppl.):661--676, 2003.

\bibitem[Cob68]{Cobham:68}
A.~Cobham.
\newblock On the hartmanis-stearns problem for a class of tag machines.
\newblock In {\em Switching and Automata Theory, Ninth annual symposium on
  switching and automata theory}, pages 51--60, 1968.

\bibitem[Cob69]{Cobham:1969}
A.~Cobham.
\newblock On the base-dependence of sets of numbers recognizable by finite
  automata.
\newblock {\em Mathematical Systems Theory}, pages 186--192, 1969.

\bibitem[Cob72]{Cobham:1972}
A.~Cobham.
\newblock Uniform tag sequences.
\newblock {\em Math. Systems Theory}, 6:164--192, 1972.

\bibitem[CR11]{Charlier&Rampersad:2011}
{\'E}.~Charlier and N.~Rampersad.
\newblock The growth function of {$S$}-recognizable sets.
\newblock {\em Theoret. Comput. Sci.}, 412(39):5400--5408, 2011.

\bibitem[Dek94]{dekking:1994}
F.~M. Dekking.
\newblock Iteration of maps by an automaton.
\newblock {\em Discrete Math.}, 126(1-3):81--86, 1994.

\bibitem[DR09]{Durand&Rigo:2009}
F.~Durand and M.~Rigo.
\newblock Syndeticity and independent substitutions.
\newblock {\em Adv. in Appl. Math.}, 42(1):1--22, 2009.

\bibitem[Dur98a]{Durand:1998a}
F.~Durand.
\newblock A generalization of {Cobham's} theorem.
\newblock {\em Theory Comput. Systems}, 31:169--185, 1998.

\bibitem[Dur98b]{Durand:1998c}
F.~Durand.
\newblock Sur les ensembles d'entiers reconnaissables.
\newblock {\em J. {Th\'eorie} Nombres Bordeaux}, 10:65--84, 1998.

\bibitem[Dur02]{Durand:2002}
F.~Durand.
\newblock A theorem of {C}obham for non-primitive substitutions.
\newblock {\em Acta Arith.}, 104(3):225--241, 2002.

\bibitem[Dur11]{Durand:2011}
F.~Durand.
\newblock Cobham's theorem for substitutions.
\newblock {\em J. Eur. Math. Soc. (JEMS)}, 13(6):1799--1814, 2011.

\bibitem[Dur13]{Durand:2013}
F.~Durand.
\newblock Decidability of the {HD}0{L} ultimate periodicity problem.
\newblock {\em RAIRO Theor. Inform. Appl.}, 47(2):201--214, 2013.

\bibitem[Fel50]{feller:1950}
William Feller.
\newblock {\em An {I}ntroduction to {P}robability {T}heory and {I}ts
  {A}pplications. {V}ol. {I}}.
\newblock John Wiley \& Sons, Inc., New York, N.Y., 1950.

\bibitem[Foa10]{foata:2010}
D.~Foata.
\newblock Eulerian polynomials: from {E}uler's time to the present.
\newblock In {\em The legacy of {A}lladi {R}amakrishnan in the mathematical
  sciences}, pages 253--273. Springer, New York, 2010.

\bibitem[Gan59]{Gantmacher:59}
F.~R. Gantmacher.
\newblock {\em The theory of matrices. {V}ols. 2}.
\newblock Chelsea Publishing Co., New York, 1959.

\bibitem[HJ13]{Horn&Johnson:2013}
R.~A. Horn and C.~R. Johnson.
\newblock {\em Matrix analysis}.
\newblock Cambridge University Press, Cambridge, second edition, 2013.

\bibitem[Hon09]{Honkala:2009}
J.~Honkala.
\newblock On the simplification of infinite morphic words.
\newblock {\em Theoret. Comput. Sci.}, 410(8-10):997--1000, 2009.

\bibitem[Lin89]{Lindqvist:1989}
B.~H. Lindqvist.
\newblock Asymptotic properties of powers of nonnegative matrices, with
  applications.
\newblock {\em Linear Algebra Appl.}, 114/115:555--588, 1989.

\bibitem[LM95]{Marcus&Lind:1995}
D.~Lind and B.~Marcus.
\newblock {\em An introduction to symbolic dynamics and coding}.
\newblock Cambridge University Press, Cambridge, 1995.

\bibitem[Mau86]{Mauduit:1986}
C.~Mauduit.
\newblock Morphismes unispectraux.
\newblock {\em Theoret. Comput. Sci.}, 46(1):1--11, 1986.

\bibitem[Mey00]{Meyer:2000}
C.~Meyer.
\newblock {\em Matrix analysis and applied linear algebra}.
\newblock Society for Industrial and Applied Mathematics (SIAM), Philadelphia,
  PA, 2000.

\bibitem[NR07]{Nicolay&Rigo:07}
S.~Nicolay and M.~Rigo.
\newblock About frequencies of letters in generalized automatic sequences.
\newblock {\em Theoret. Comput. Sci.}, 374(1-3):25--40, 2007.

\bibitem[Pan83]{Pansiot:83}
J.-J. Pansiot.
\newblock Hi\'erarchie et fermeture de certaines classes de tag-syst\`emes.
\newblock {\em Acta Inform.}, 20(2):179--196, 1983.

\bibitem[PW08]{Pemantle&Wislon}
R.~Pemantle and M.~C. Wilson.
\newblock Twenty combinatorial examples of asymptotics derived from
  multivariate generating functions.
\newblock {\em SIAM Rev.}, 50(2):199--272, 2008.

\bibitem[Rig00]{Rigo:2000}
M.~Rigo.
\newblock Generalization of automatic sequences for numeration systems on a
  regular language.
\newblock {\em Theoret. Comput. Sci.}, 244(1-2):271--281, 2000.

\bibitem[Rig14]{rigo1}
M.~Rigo.
\newblock {\em Formal Languages, Automata and Numeration Systems: Introduction
  to Combinatorics on Words}, volume~1 of {\em Network and telecommunications
  series}.
\newblock ISTE-Wiley, 2014.

\bibitem[RM02]{Maes&Rigo:02}
M.~Rigo and A.~Maes.
\newblock More on generalized automatic sequences.
\newblock {\em J. Autom. Lang. Comb.}, 7(3):351--376, 2002.

\bibitem[Sen81]{Seneta:1981}
E.~Seneta.
\newblock {\em Nonnegative matrices and {M}arkov chains}.
\newblock Springer-Verlag, New York, 1981.
\newblock Second ed.

\bibitem[SS78]{Salomaa-Soittola:1978}
A.~Salomaa and M.~Soittola.
\newblock {\em Automata-theoretic aspects of formal power series}.
\newblock Springer-Verlag, New York-Heidelberg, 1978.
\newblock Texts and Monographs in Computer Science.

\bibitem[STEM14]{sprunger}
D.~Sprunger, W.~Tune, J.~Endrullis, and L.~S. Moss.
\newblock Eigenvalues and transduction of morphic sequences: Extended version.
\newblock arXiv:1406.1754, 2014.

\end{thebibliography}

\end{document}